\documentclass[11pt]{article}
\sloppy
\parskip=1ex

\usepackage{amssymb}
\usepackage{amsmath}
\usepackage{amsthm}
\usepackage{enumerate}
\usepackage{color}
\usepackage[dvipsnames]{xcolor}
\usepackage{fullpage}
\usepackage{hyperref}

\usepackage[utf8]{inputenc}
\usepackage{frcursive}

\usepackage{graphicx, float}


\newtheorem{theorem}{Theorem}[section]

\newtheorem{corollary}[theorem]{Corollary}
\newtheorem{lemma}[theorem]{Lemma}

\newtheorem*{claim*}{Claim}
\newtheorem{conjecture}[theorem]{Conjecture}
\newtheorem{proposition}[theorem]{Proposition}

\newtheorem{remark}[theorem]{Remark}

\newtheorem{qn}[theorem]{Question}

\newtheorem{theorem LSV}{Theorem LSV}
\newtheorem*{theorem LSV*}{Theorem LSV}

\newtheorem{theorem MTP}{Mass Transference Principle}
\newtheorem*{theorem MTP*}{Mass Transference Principle}
\newtheorem{theorem K}{Khintchine's Theorem}
\newtheorem*{theorem K*}{Khintchine's Theorem}
\newtheorem{theorem J}{Jarn\'{\i}k's Theorem}
\newtheorem*{theorem J*}{Jarn\'{\i}k's Theorem}
\newtheorem{theorem KJ}{Khintchine--Jarn\'{\i}k Theorem}
\newtheorem*{theorem KJ*}{Khintchine--Jarn\'{\i}k Theorem}
\newtheorem{theorem BV1}{Theorem BV1}
\newtheorem*{theorem BV1*}{Theorem BV1}
\newtheorem{theorem BV2}{Theorem BV2}
\newtheorem*{theorem BV2*}{Theorem BV2}
\newtheorem{theorem KG}{Theorem KG}
\newtheorem*{theorem KG*}{Theorem KG}
\newtheorem{theorem IHKG}{Inhomogeneous Khintchine--Groshev Theorem}
\newtheorem*{theorem IHKG*}{Inhomogeneous Khintchine--Groshev Theorem}
\newtheorem{theorem DLN1}{Theorem DLN1}
\newtheorem*{theorem DLN1*}{Theorem DLN1}
\newtheorem{theorem DLN2}{Theorem DLN2}
\newtheorem*{theorem DLN2*}{Theorem DLN2}
\newtheorem{theorem DLN3}{Theorem DLN3}
\newtheorem*{theorem DLN3*}{Theorem DLN3}
\newtheorem{theorem S}{Theorem S}
\newtheorem*{theorem S*}{Theorem S}

\numberwithin{equation}{section}


\renewcommand{\Bbb}[1]{\mathbb{#1}}
\newcommand{\N}{{\Bbb N}}         

\newcommand{\R}{{\Bbb R}}        
\newcommand{\Z}{{\Bbb Z}}         

\newcommand{\cA}{{\cal A}}

\newcommand{\cC}{{\cal C}}

\newcommand{\cH}{{\cal H}}

\newcommand{\cL}{{\cal L}}

\newcommand{\cN}{{\cal N}}

\newcommand{\cR}{{\cal R}}
\newcommand{\cS}{{\cal S}}



\renewcommand{\le}{\leq}
\renewcommand{\ge}{\geq}

\DeclareMathOperator{\dimh}{dim_H}


\newcommand{\mmod}[1]{\,\,\mathrm{mod}\,\,#1}

\def\alp{{\alpha}} 
\def\bet{{\beta}}  
\def\gam{{\gamma}} 
\def\del{{\delta}} \def\Del{{\Delta}}

\def\tet{{\theta}}  

 \def\Lam{{\Lambda}}

\def\eps{\varepsilon}

\def\le{\leqslant} \def\ge{\geqslant}

\def\d{{\,{\rm d}}}

\def \leq {\le}
\def \geq {\ge}

\def \bC {\mathbb C}

\def \bN {\mathbb N}

\def \bQ {\mathbb Q}
\def \bR {\mathbb R}
\def \bZ {\mathbb Z}


\def \cA {\mathcal A}

\def \cC {\mathcal C}

\def \cH {\mathcal H}

\def \cL {\mathcal L}

\def \cN {\mathcal N}

\def \cR {\mathcal R}
\def \cS {\mathcal S}

\def \supp {{\mathrm{supp}}}



\newcommand{\Addresses}{{
  \bigskip
  \footnotesize

  D.~Allen, 
  \textsc{School of Mathematics, University of Bristol, Fry Building 
Woodland Road, Bristol, BS8 1UG, and the Heilbronn Institute for Mathematical Research, Bristol, UK}\par\nopagebreak
  \textit{E-mail address:} \texttt{demi.allen@bristol.ac.uk}

  \medskip

  S.~Chow, 
  \textsc{Mathematics Institute, Zeeman Building, University of Warwick, Coventry, CV4 7AL, UK}\par\nopagebreak
  \textit{E-mail address:} \texttt{sam.chow@warwick.ac.uk}
  
  \medskip
  
  H.~Yu, 
  \textsc{Department of Pure Mathematics and Mathematical Statistics, Centre for Mathematical Sciences, Cambridge, CB3 0WB, UK}\par\nopagebreak
  \textit{E-mail address:} \texttt{hy351@maths.cam.ac.uk}
}}

\title{Dyadic Approximation in the Middle-Third Cantor Set}
\author{Demi Allen\footnote{Supported by the Heilbronn Institute for Mathematical Research} \and Sam Chow \footnote{Supported by EPSRC Fellowship Grant EP/S00226X/2} \and Han Yu \footnote{Supported by the European Research Council (ERC) under the European Union’s Horizon 2020 research and innovation programme (grant agreement No. 803711), and indirectly by Corpus Christi College, Cambridge}}

\date{\today}

\date{\small{\itshape Dedicated to Professor Sanju Velani}}


\begin{document}

\maketitle
\begin{abstract}
In this paper, we study the metric theory of dyadic approximation in the middle-third Cantor set. This theory complements earlier work of Levesley, Salp, and Velani (2007), who investigated the problem of approximation in the Cantor set by triadic rationals. We find that the behaviour when we consider dyadic approximation in the Cantor set is substantially different to considering triadic approximation in the Cantor set. In some sense, this difference in behaviour is a manifestation of Furstenberg's times 2 times 3 phenomenon from dynamical systems, which asserts that the base 2 and base 3 expansions of a number are not both structured. 
\end{abstract}
\noindent{\small 2010 {\it Mathematics Subject Classification}\/: Primary 11J83, 11K60, 28A78; Secondary 11J86, 11K70, 28A80}

\noindent{\small{\it Keywords and phrases}\/: Diophantine approximation, middle-third Cantor set, Hausdorff measures, Fourier analysis, $\times 2\times 3$ phenomenon.}

\section{Introduction}

\subsection{Background and statement of results}

In 1984, Mahler wrote a note entitled ``Some Suggestions for Further Research''~\cite{Mahler84} in which he posed a number of interesting questions which he deemed worthy of attention. One of these questions posed by Mahler, which is of particular interest to us here, was the following:

\begin{quote}
\emph{How close can irrational elements of Cantor's set be approximated by rational numbers \\
(i) in Cantor's set, and \\
(ii) by rational numbers not in Cantor's set?} 
\end{quote}

Since the publication of Mahler's note, this question has attracted a huge amount of interest and a wide variety of people have uncovered information about various aspects of this problem. See, for example, \cite{Allen-Barany ref, BroderickFishmanReich2011, Bugeaud2008, BugeaudDurand2016, FishmanMerrillSimmons2018,FishmanSimmons2014_intrinsic,  FishmanSimmons2015_extrinsic, KLW, Kristensen2006, LSV ref, Sch2020, Weiss2001} and references therein. 

Arguably, the first step towards addressing Mahler's problem outlined above was the work of Weiss \cite{Weiss2001}, who showed that $\mu$-almost no point in the Cantor set is \emph{very well approximable}. Here $\mu$ denotes the natural measure on the middle-third Cantor set as defined in \eqref{Cantor measure definition} below. We shall write Cantor set and middle-third Cantor set interchangeably throughout. Recall that Dirichlet's Approximation Theorem tells us that for any $x \in \R$, we have 
\begin{align} \label{Dirichlet}
\left|x-\frac{p}{q}\right|<\frac{1}{q^2}
\end{align}
for infinitely many pairs $(p,q) \in \Z \times \N$. We say that $x \in \R$ is \emph{very well approximable} if the exponent~2 in the denominator of the right-hand side of \eqref{Dirichlet} can be improved, i.e. increased. That is, $x \in \R$ is very well approximable if there exists $\varepsilon > 0$ such that
\begin{align} \label{VWA}
\left|x-\frac{p}{q}\right|<\frac{1}{q^{2+\varepsilon}}
\end{align}
for infinitely many pairs $(p,q) \in \Z \times \N$.

In spite of the work of Weiss \cite{Weiss2001}, in \cite{LSV ref} Levesley, Salp, and Velani were able to prove that there do in fact exist very well approximable numbers (aside from Liouville numbers) in the middle-third Cantor set, thus solving a more precise problem attributed to Mahler which is stated in \mbox{\cite[Problem 35]{Bugeaud - Approximation by Algebraic Numbers}.}

This result prompts further study of Mahler's question from the point of view of \emph{irrationality exponents}. The \emph{irrationality exponent} $\xi(x)$ of $x \in \R$ is defined as: 
\[\xi(x):=\sup\left\{\xi \in \R: \left|x-\frac{p}{q}\right|<\frac{1}{q^{\xi}} \text{ for infinitely many } (p,q) \in \Z \times \N\right\}.\]
By Dirichlet's Approximation Theorem, we know that $\xi(x)\geq 2$ for all $x \in \R$. A real number $x \in \R$ is very well approximable if we have strict inequality here, i.e. if $\xi(x)>2$. The result of Weiss \cite{Weiss2001} shows us that for $\mu$-almost all points in the middle-third Cantor set this is not the case. Nevertheless, the work of Levesley, Salp, and Velani \cite{LSV ref} shows that the set of non-Liouville points in the middle-third Cantor set with irrationality exponent strictly greater than 2 is non-empty. Further results concerning the irrationality exponents of points in the middle-third Cantor set have been obtained in \cite{Bugeaud2008,BugeaudDurand2016}.

 
In this paper, we will be concerned with another aspect of Mahler's problem. In particular, we will be concerned with the problem of how well points in the middle-third Cantor set can be approximated by dyadic rationals; that is, rationals with denominators which are powers of 2. To some extent, our present work is motivated by the work of Levesley, Salp, and Velani \cite{LSV ref} who, in resolving \cite[Problem 35]{Bugeaud - Approximation by Algebraic Numbers},  considered \emph{triadic} approximation in the middle-third Cantor set. In particular, they proved a ``zero-full'' dichotomy for the $\mu$-measure of the points in the middle-third Cantor set which are $\psi$-well approximable by triadic rationals; that is, rationals with denominators which are powers of 3. In fact, the work of Levesley, Salp, and Velani is far more general and they actually proved such a ``zero-full'' dichotomy for the Hausdorff measures of the set in question with respect to general gauge functions. However, in this paper we will only be concerned with $\mu$-measure.

Throughout, we will let $K$ denote the middle-third Cantor set. Recall that $K$ consists of the points $x \in [0,1]$ which have a ternary expansion consisting entirely of 0's and 2's. We will also denote by $\gamma$ the Hausdorff dimension of $K$, namely
\begin{equation} \label{GamDef}
\gamma:= \dimh{K} = \frac{\log{2}}{\log{3}}.
\end{equation}
Throughout, for a subset $X \subset \R$, we will denote the Hausdorff dimension of $X$ by $\dimh{X}$. For a real number $s>0$, we shall write $\cH^s(X)$ to denote the Hausdorff $s$-measure of $X$.
 
We will denote by $\mu$ the natural measure on $K$. More precisely, $\mu$ is the Hausdorff $\gamma$-measure restricted to the middle-third Cantor set, i.e. for $X \subset \R$ which is Borel, 
\begin{align} \label{Cantor measure definition}
\mu(X) = \frac{\cH^{\gamma}(X \cap K)}{\cH^{\gamma}(K)} = \cH^{\gamma}(X \cap K),
\end{align}
where we have the last inequality since it happens that $\cH^{\gamma}(K)=1$. Moreover, $\mu(K)=1$ and so $\mu$ is a probability measure on $K$. For further information on Hausdorff measure and dimension, we refer the reader to \cite{Falconer}.

Given $b \in \N$, we will write
\[
\cA(b)=\{b^n: n=0,1,2,3,\dots\}.
\]
Given $\psi: \N \to \R^+$, let
\[
W_{3}(\psi):=\left\{x \in [0,1]: \left|x-\frac{p}{q}\right| < \frac{\psi(q)}{q} \text{ for infinitely many } (p,q) \in \Z \times \cA(3)\right\}.
\] 
Here $\R^+:=[0,\infty)$.

Regarding triadic approximation in the middle-third Cantor set, the following statement follows immediately from \cite[Theorem 1]{LSV ref}.

\begin{theorem}[Levesley--Salp--Velani \cite{LSV ref}] \label{ThmLSV}
For $\psi: \N \to \R^+$,
\[
\mu(W_{3}(\psi))=
\begin{cases}
0 &\text{if}\quad\sum_{n=1}^{\infty}{\psi(3^n)^{\gamma}}<\infty,\\
&\\
1 &\text{if}\quad\sum_{n=1}^{\infty}{\psi(3^n)^{\gamma}}=\infty.
\end{cases}
\]
\end{theorem}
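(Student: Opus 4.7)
The plan is to realise $W_3(\psi)$ as a classical $\limsup$ set,
\[
W_3(\psi)=\limsup_{n\to\infty}A_n, \qquad A_n:=\bigcup_{p\in\Z}B\!\left(\frac{p}{3^n},\frac{\psi(3^n)}{3^n}\right)\cap[0,1],
\]
and to apply the Borel--Cantelli lemmas (for $\mu$) in the convergence and divergence cases respectively. The central input is the Ahlfors $\gamma$-regularity of $\mu$: there exist constants $c,C>0$ with $cr^{\gamma}\le\mu(B(x,r))\le Cr^{\gamma}$ for all $x\in K$ and $0<r<1$.

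For the convergence half, I would discard those $n$ with $\psi(3^n)>1$ (finitely many, since otherwise $\sum_n\psi(3^n)^{\gamma}=\infty$ would contradict the hypothesis). For the remaining $n$, only $p/3^n$ lying within distance $\psi(3^n)/3^n$ of $K$ contribute, and these are the (near-)endpoints of the $2^n$ level-$n$ Cantor intervals, giving at most $2^{n+1}$ active centres. The Ahlfors upper bound then yields
\[
\mu(A_n)\le 2^{n+1}\cdot C(\psi(3^n)/3^n)^{\gamma}=2C\psi(3^n)^{\gamma},
\]
where we used $3^{n\gamma}=2^n$. Summability $\sum_n\mu(A_n)<\infty$ follows, and the first Borel--Cantelli lemma delivers $\mu(W_3(\psi))=0$.

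For the divergence half, I would restrict attention to the structured subfamily
\[
A_n^{\star}:=\bigcup_{p/3^n\in K}B\!\left(\frac{p}{3^n},\frac{\psi(3^n)}{3^n}\right),
\]
centred at genuine Cantor endpoints. The Ahlfors \emph{lower} bound and the $\ge 3^{-n}$-separation of the $\asymp 2^n$ centres (WLOG $\psi(3^n)\le 1/2$) give $\mu(A_n^{\star})\asymp\psi(3^n)^{\gamma}$, so $\sum_n\mu(A_n^{\star})=\infty$. I would then apply a divergence Borel--Cantelli lemma for $\mu$ (Kochen--Stone / Sprind\v{z}uk style), which requires the quasi-independence estimate
\[
\mu(A_n^{\star}\cap A_m^{\star})\le C\mu(A_n^{\star})\mu(A_m^{\star})\qquad(n<m).
\]
Here the triadic denominators align perfectly with the Cantor hierarchy: each ball of $A_n^{\star}$ meets $K$ in a union of boundedly many level-$(n+k)$ cylinders, where $k=\lceil\log_3(1/\psi(3^n))\rceil$, and within each such cylinder $I$ the natural affine identification with $K$ shows that the trace of $A_m^{\star}$ has $\mu$-mass $\asymp\mu(I)\cdot\psi(3^m)^{\gamma}$; summing over $I$ yields the quasi-independence bound. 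This gives $\mu(\limsup_n A_n^{\star})>0$, and running the same argument inside an arbitrary level-$n_0$ Cantor cylinder $I_0$ yields $\mu(W_3(\psi)\cap I_0)\ge c\mu(I_0)$ with $c$ uniform in $I_0$, so Lebesgue differentiation for the Ahlfors-regular measure $\mu$ forces $\mu(W_3(\psi))=1$.

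The principal obstacle is the quasi-independence estimate, and within it the case where $\psi(3^n)$ is not exactly a negative power of $3$, so that $B(p/3^n,\psi(3^n)/3^n)$ does not decompose cleanly into full Cantor cylinders. I would handle this by a two-sided rounding---inflating or deflating $\psi(3^n)/3^n$ to the nearest power $3^{-n-k}$---which costs only a uniform multiplicative constant and preserves both the divergence of $\sum_n\mu(A_n^{\star})$ and the quasi-independence bound.
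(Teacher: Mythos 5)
The paper does not actually prove Theorem~\ref{ThmLSV}: it is quoted verbatim as a special case of~\cite[Theorem~1]{LSV ref}, where the divergence half is established via the ubiquitous-systems and mass-transference framework of~\cite{BDV ref}, not by a direct quasi-independence computation. So you are proposing a fresh route rather than reconstructing the cited one, and it should be judged on its own terms.

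Your convergence argument is correct: for $\psi(3^n)<1$ only the $2^{n+1}$ endpoints of $K_n$ are active centres, and the Ahlfors upper bound $\mu(B(z,r))\ll r^\gamma$ (valid for all $z\in\R$, cf.~\eqref{UpperMeasure}) gives $\mu(A_n)\ll\psi(3^n)^\gamma$, so the first Borel--Cantelli lemma applies.

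The divergence argument has a genuine gap. The pairwise estimate
\[
\mu(A_n^{\star}\cap A_m^{\star})\le C\,\mu(A_n^{\star})\,\mu(A_m^{\star})\qquad(n<m)
\]
is false whenever $m-n$ is small relative to $k_n:=\lceil\log_3(1/\psi(3^n))\rceil$: for $m=n$ it reads $\mu(A_n^{\star})\le C\mu(A_n^{\star})^2$, impossible once $\psi(3^n)$ is small, and the same failure persists for $n<m<n+k_n$ because a single ball of $A_n^{\star}$ can swallow a ball of $A_m^{\star}$ whole. Your affine-cylinder argument, which rescales a level-$(n+k_n)$ cylinder $I$ to $[0,1]$ and reads off $\mu(A_m^{\star}\cap I)\asymp\mu(I)\psi(3^m)^\gamma$, is only valid for $m\ge n+k_n$; for $m<n+k_n$ the rescaled balls are wider than $[0,1]$ and the trace is either $0$ or $\asymp\mu(I)$, not $\asymp\mu(I)\psi(3^m)^\gamma$. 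What actually saves the argument in the near-diagonal range $n<m<n+k_n$ is a separate geometric-decay bound: each of the $O(2^n)$ balls of $A_n^{\star}$ contains only $O(1)$ centres of $A_m^{\star}$ (since their number in a ball of radius $\psi(3^n)/3^n$ is $\asymp 2^{m-n}\psi(3^n)^\gamma<1$), giving $\mu(A_n^{\star}\cap A_m^{\star})\ll 2^{n-m}\psi(3^m)^\gamma$, and then $\sum_{n<m<n+k_n}2^{n-m}\psi(3^m)^\gamma\ll\sum_m\mu(A_m^{\star})$, which is negligible against $(\sum_m\mu(A_m^{\star}))^2$ under the divergence hypothesis, so the Chung--Erd\H{o}s / BDV machinery still closes. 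You have instead identified the rounding of $\psi(3^n)$ to a power of $3$ as the ``principal obstacle''; that costs only a multiplicative constant and is harmless. It is the near-diagonal terms, not the rounding, that break the clean pairwise bound you wrote down, and your sketch says nothing about them.
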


In this paper, we will be concerned with analogous statements for approximating points in the Cantor set by \emph{dyadic} rationals rather than \emph{triadic} rationals. With this in mind, given a function $\psi: \N \to \R^+$, we will be concerned with

\[W_{2}(\psi):=\left\{x \in [0,1]: \left|x-\frac{p}{q}\right|<\frac{\psi(q)}{q} \text{ for infinitely many } (p,q) \in \Z \times \cA(2)\right\}.\]  

Along the same lines as Theorem \ref{ThmLSV}, we have the following conjecture\footnote{We thank Sanju Velani for suggesting to us that this statement should be true. The present work supports this.}, which represents the clean-cut dichotomy which we expect to be the eventual truth regarding dyadic approximation in the middle-third Cantor set.

\begin{conjecture}[Velani] \label{main conjecture}
For monotonic $\psi: \N \to \R^+$, we have
\[
\mu(W_2(\psi))=
\begin{cases}
0&\text{if}\quad\sum_{n=1}^{\infty}{\psi(2^n)}<\infty,\\
&\\
1 &\text{if}\quad\sum_{n=1}^{\infty}{\psi(2^n)}=\infty.
\end{cases}
\]
\end{conjecture}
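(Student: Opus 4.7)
My plan is to exploit the multiplicative $2$ versus $3$ mismatch through Fourier analysis of the Cantor measure $\mu$, attacking both halves of the dichotomy in parallel. Set $E_n := \bigcup_{p \in \mathbb{Z}} B(p/2^n,\psi(2^n)/2^n)$, so $W_2(\psi) = \limsup_n E_n$, and (eventually) assume $\psi(2^n) < 1/2$ so that the constituent intervals in $E_n$ are disjoint. Then $\mathbf{1}_{E_n}$ is a $2^{-n}$-periodic tooth function, and expanding in Fourier series and integrating against $\mu$ gives the accounting identity
$$
\mu(E_n) = 2\psi(2^n) + \sum_{k \ne 0} \frac{\sin(2\pi k\psi(2^n))}{\pi k}\,\widehat{\mu}(-k\cdot 2^n).
$$
For the convergence half, I would apply the standard Borel--Cantelli lemma: the main term contributes $2\sum_n \psi(2^n) < \infty$, and it remains to show the error term is summable. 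Since $\widehat{\mu}(\xi) = e^{-\pi i \xi}\prod_{j\ge 1}\cos(2\pi\xi/3^j)$, one expects cancellation in $\widehat{\mu}(k\cdot 2^n)$ because the binary digits of $k\cdot 2^n$ never align with the base-$3$ structure of the Cantor set, keeping a positive proportion of the cosine factors bounded away from $1$. A polynomial bound of the form $|\widehat{\mu}(k\cdot 2^n)|\ll (k\cdot 2^n)^{-\eta}$ for some $\eta>0$ would make the tail in $k$ absolutely convergent and leave a geometric factor $2^{-n\eta}$ rendering the outer sum finite.

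For the divergence half, assume $\sum_n\psi(2^n) = \infty$; the same identity gives $\sum_n \mu(E_n) = \infty$. I would invoke the divergence Borel--Cantelli lemma in Chung--Erd\H{o}s form, reducing the argument to the quasi-independence estimate
$$
\sum_{m,n \le N} \mu(E_m \cap E_n) \ll \Bigl(\sum_{n \le N} \mu(E_n)\Bigr)^2.
$$
Expanding the product $\mathbf{1}_{E_m}\mathbf{1}_{E_n}$ as a double Fourier series isolates a diagonal main term $4\psi(2^m)\psi(2^n)$, plus off-diagonal contributions featuring $\widehat{\mu}(-k\cdot 2^m - l\cdot 2^n)$; the task reduces to bounding $\widehat\mu$ along the two-parameter family of bi-dyadic sums, summing, and using the monotonicity of $\psi$ to sharpen the Fourier truncations. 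A zero-one law for the tail $\sigma$-algebra of the Bernoulli shift on the Cantor symbolic space $\{0,2\}^{\mathbb{N}}$ would finally upgrade positive measure to full measure.

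The principal obstacle in both directions is supplying effective Fourier decay of $\mu$ at dyadic and bi-dyadic frequencies. The Cantor measure is \emph{not} Rajchman: $\widehat{\mu}$ returns to values close to $1$ along a sparse set of $3$-adically structured frequencies, and the content of the $\times 2\times 3$ philosophy is precisely that none of those bad frequencies lie in $\{k\cdot 2^n\}$ in a quantitative sense. Converting this qualitative assertion into a usable power (or even logarithmic) bound is the deep step. I would expect the convergence direction to succumb first, via a careful Fourier-plus-counting argument exploiting decay ``on average'' in $k$ rather than pointwise, while a fully general divergence statement likely requires either an additional regularity condition on $\psi$ or a finer overlap estimate harnessing monotonicity more aggressively.
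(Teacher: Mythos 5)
This is an open conjecture in the paper; the authors state explicitly that they ``have been unable to prove Conjecture \ref{main conjecture} in full.'' The paper's contribution is partial progress (Theorems \ref{main convergence theorem} and \ref{main divergence theorem}) together with conditional results, so there is no proof to compare against. Your proposal is an honest strategy sketch that identifies the right framework and the right obstruction, but it is not a proof, and one of its steps is incorrect.

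The Fourier identity for $\mu(E_n)$ is correct, and your framework is essentially the paper's own (the paper works with a smoothed indicator and a truncated product $\prod_{j}\tfrac{1+e(2^{n+1}m/3^j)}{2}$, but this is morally $\widehat{\mu}$ at dyadic-shifted frequencies). The missing ingredient you flag --- polynomial decay $|\widehat{\mu}(k\cdot 2^n)|\ll (k\cdot 2^n)^{-\eta}$ --- translates, via Lemma \ref{DIGITCHANGE}, into a digit-change lower bound $D_2(y)+D_3(y)\gg\log y$, which is precisely the open Question \ref{RGSC}. The best unconditional input, Lemma \ref{digit changes lemma} (Stewart; Bugeaud--Cipu--Mignotte, built on Baker's method), gives only $\gg\log\log y/\log\log\log y$, and that is the source of the superlogarithmic factor in Theorem \ref{main convergence theorem}. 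Even under the Lang--Waldschmidt Conjecture one only reaches a power-saving $n^{-\eps}\psi(2^n)^\gamma$ (Corollary \ref{Conditional Convergence}), still short of the conjectured $\sum\psi(2^n)<\infty$ threshold. For the divergence half, your Chung--Erd\H{o}s setup matches the paper's, but the final step you propose --- a zero-one law for the tail $\sigma$-algebra of the base-3 Bernoulli shift on $\{0,2\}^\N$ --- does not apply: $W_2(\psi)$ is \emph{not} a tail event for that shift, since altering finitely many ternary digits translates $x$ by a triadic rational, which does not preserve proximity to dyadic rationals. The paper instead localizes, proving $\mu(W_2(\psi)\cap I)\gg\mu(I)$ uniformly over balls $I$ centred in $K$ and invoking doubling via Lemma \ref{positive measure lemma}; even so, its divergence result covers only the single function $\psi(2^n)=2^{-\log\log n/\log\log\log n}$, far from the conjectured divergence threshold. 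Your diagnosis of the obstruction --- effective Fourier decay of $\mu$ along dyadic frequencies, i.e.\ a quantitative $\times 2\times 3$ statement --- is correct, and no current technology fills that gap.
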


\begin{remark} It is possible that Conjecture \ref{main conjecture} may even hold without the monotonicity hypothesis.
\end{remark}

To give a heuristic idea for why one might believe the above conjecture to be correct, suppose for a moment that dyadic rationals of denominator $2^n$ were uniform random variables in $[0,1]$. Then the expected number of dyadic rationals of denominator $2^n$ lying in a subinterval $I \subset [0,1]$ would be $\approx 2^n \times (\text{the length of $I$})$. 

For each $n \in \N$, let $A_n := \bigcup_{a=0}^{2^n}{B\left(\frac{a}{2^n},\frac{\psi(2^n)}{2^n}\right)}$. Then $W_2(\psi) = \bigcap_{j=0}^{\infty}{\bigcup_{n=j}^{\infty}{A_n}}$, and $\bigcup_{n=j}^{\infty}{A_n}$ is a cover of $W_2(\psi)$ for each $j \in \N$. Now, consider a fixed $n \in \N$ and suppose for this $n$ that $\frac{\psi(2^n)}{2^n} \approx 3^{-N}$. In particular, by our assumption on the distribution of dyadic rationals, we would expect $\ll\left(\frac{2^N}{3^N}\right) \times 2^n$ dyadic rationals to lie within distance $\frac{\psi(2^n)}{2^n}$ of the $N$th level of the construction (this is called $K_N$ later on) of the middle-third Cantor set. Thus, this number represents the maximum number of individual balls in $A_n$ which can possibly intersect the middle-third Cantor set. Moreover, it is known that 
\begin{equation} \label{UpperMeasure}
\mu(B(z,r)) \ll r^\gam \qquad (z \in \bR, \quad 0 < r \le 1),
\end{equation}
see for instance \cite{Weiss2001} or \cite[\S2]{Vee1999}. So, we have
\begin{align*}
\mu(A_n) \ll  \left(\frac{2^N}{3^N}\right) \times 2^n \times \left(\frac{\psi(2^n)}{2^n}\right)^{\gamma} 
         \approx 2^N \times \left(\frac{\psi(2^n)}{2^n}\right) \times 2^n \times (3^{-N})^{\gamma}
         = \psi(2^n).
\end{align*}
Combining the above heuristics with the convergence Borel--Cantelli Lemma (Lemma \ref{first Borel-Cantelli}) gives rise to the convergence part of the above conjecture. 

By considering balls of radius $\psi(2^n)/2^n$ centred at triadic rationals within the Cantor set, we can obtain a similar heuristic for the complementary lower bound $\mu(A_n) \gg \psi(2^n)$. If we knew that $\mu(A_n \cap A_m)$ were $O(\mu(A_n) \mu(A_m))$ in some suitably averaged sense, then the divergence Borel--Cantelli Lemma \cite[Proposition 2]{BDV ref} would complete the proof. 

Unfortunately, we cannot prove such bold claims about the interaction of the dyadic rationals with the middle-third Cantor set. Unlike in the case of triadic rationals, where we know exactly how they are distributed with respect to the middle-third Cantor set $K$, we know very little about how dyadic rationals are distributed with respect to $K$. For this reason, studying dyadic approximation in the Cantor set is significantly harder than studying triadic approximation in the Cantor set and, as of yet, we have been unable to prove Conjecture \ref{main conjecture} in full. As a first step towards this conjecture, the following convergence statement is relatively straightforward to establish --- we will provide a direct proof in Section~\ref{basic result section}.

\vbox{
\begin{proposition} \label{basic proposition}
For $\psi: \N \to \R^+$, if
\begin{equation} \label{BenchmarkSeries}
\sum_{n=1}^{\infty}{\psi(2^n)^{\gamma}}<\infty,
\end{equation}
then $\mu(W_{2}(\psi))=0$.
\end{proposition}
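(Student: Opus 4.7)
The plan is to apply the convergence Borel--Cantelli lemma. For each $n \in \N$, set
\[
A_n := \bigcup_{p=0}^{2^n} B\!\left(\tfrac{p}{2^n}, \tfrac{\psi(2^n)}{2^n}\right) \cap [0,1],
\]
so that $W_2(\psi) \subseteq \limsup_n A_n$, and it suffices to show that $\sum_n \mu(A_n) < \infty$. Applying the measure estimate $\mu(B(z,r)) \ll r^\gamma$ from \eqref{UpperMeasure} to all $2^n + 1$ balls indiscriminately only yields $\mu(A_n) \ll 2^{n(1-\gamma)} \psi(2^n)^\gamma$, which is too weak. The key observation is that only those $p$ for which $B(p/2^n, \psi(2^n)/2^n) \cap K \ne \emptyset$ contribute to $\mu(A_n)$, so one should count these and only these.

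Since $\sum_n \psi(2^n)^\gamma < \infty$ forces $\psi(2^n) \to 0$, I may assume that $\psi(2^n) \le 1$, equivalently $r_n := \psi(2^n)/2^n \le 2^{-n}$, for all $n$ sufficiently large; the finitely many exceptional $n$ do not affect the $\limsup$. To count the dyadic rationals of denominator $2^n$ that lie within $r_n$ of $K$, I match the dyadic scale $2^{-n}$ with a ternary cover at comparable scale. Taking $N_n := \lceil n\gamma \rceil$, the standard level-$N_n$ ternary construction writes $K$ as a union of $2^{N_n} \le 2 \cdot 2^{n\gamma}$ closed intervals, each of length $3^{-N_n} \le 2^{-n}$. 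Enlarging each of these by $r_n \le 2^{-n}$ produces a cover of the $r_n$-neighbourhood of $K$ by $O(2^{n\gamma})$ intervals of length $\le 3 \cdot 2^{-n}$, and each such interval contains at most $O(1)$ dyadic rationals of denominator $2^n$. Hence the set $P_n := \{0 \le p \le 2^n : B(p/2^n, r_n) \cap K \ne \emptyset\}$ has cardinality $|P_n| \ll 2^{n\gamma}$, a bound independent of $\psi$.

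Combining this count with \eqref{UpperMeasure} gives
\[
\mu(A_n) \le \sum_{p \in P_n} \mu\!\left(B\!\left(\tfrac{p}{2^n}, r_n\right)\right) \ll 2^{n\gamma} \cdot r_n^{\gamma} = \psi(2^n)^\gamma,
\]
whence $\sum_n \mu(A_n) \ll \sum_n \psi(2^n)^\gamma < \infty$ by \eqref{BenchmarkSeries}, and the convergence Borel--Cantelli lemma delivers $\mu(W_2(\psi)) = 0$. The only step requiring genuine thought is the middle paragraph: realising that, although the universe of denominators involves $2^n$ numerators, a ternary cover of $K$ at scale $\asymp 2^{-n}$ forces only $O(2^{n\gamma})$ of them to produce a ball meeting $K$. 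This factor of $2^{-n(1-\gamma)}$ saving is precisely what is needed to convert the naive bound into the hypothesised $\psi(2^n)^\gamma$.
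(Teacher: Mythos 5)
Your proof is correct and follows essentially the same approach as the paper: match the dyadic scale $2^{-n}$ with a ternary level $\approx n\gamma$, so that only $O(2^{n\gamma})$ dyadic rationals of denominator $2^n$ can land within $r_n$ of $K$, deduce $\mu(A_n)\ll\psi(2^n)^\gamma$ via the upper regularity bound \eqref{UpperMeasure}, and conclude with the convergence Borel--Cantelli lemma. The paper routes the same count through Lemmas \ref{connection} and \ref{DroppingDown} (passing via the endpoint sets $\cC_N$ and $\cC_M$ with $3^{-M}\asymp 2^{-n}$ and the trivial bound $|\cC_M|\le 2^{M+1}$) because those lemmas are reused for the Fourier-analytic refinement, but your direct cover of $K$ by the $r_n$-enlargements of the level-$N_n$ ternary intervals captures the identical content in a self-contained way.
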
}

Using Fourier analysis, we are able to establish the following improvement upon the above benchmark result.

\begin{theorem}[Main Convergence Theorem] \label{main convergence theorem}
If 
\begin{align} \label{main convergence theorem sum} 
\sum_{n=1}^{\infty} (2^{-\log n / (\log \log n \cdot \log \log \log n)} \psi(2^n)^\gamma+\psi(2^n)) &< \infty,
\end{align} 
then $\mu(W_2(\psi)) = 0$.
\end{theorem}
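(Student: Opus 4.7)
The plan is to establish the ball-estimate
\[
\mu(A_n) \ll \psi(2^n) + 2^{-\log n/(\log \log n \cdot \log \log \log n)} \psi(2^n)^\gamma,
\]
where $A_n := \bigcup_{a=0}^{2^n} B(a/2^n, \psi(2^n)/2^n)$; since $W_2(\psi) = \limsup_n A_n$, the hypothesis of the theorem then gives $\sum_n \mu(A_n) < \infty$, and the convergence Borel--Cantelli Lemma yields $\mu(W_2(\psi)) = 0$.

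To improve on the basic bound $\mu(A_n) \ll \psi(2^n)^\gamma$ underlying Proposition~\ref{basic proposition}, one must exploit cancellation coming from the regular arithmetic-progression structure of the dyadic centres. Setting $\delta_n := \psi(2^n)/2^n$, extend $\mathbf{1}_{[-\delta_n,\delta_n]}$ to a $2^{-n}$-periodic function $\chi_n$; on $[0,1]$ we have $\mathbf{1}_{A_n} \le \chi_n$, and expanding $\chi_n$ as a Fourier series with frequencies in $2^n\bZ$ yields
\[
\mu(A_n) \le \int \chi_n \, d\mu = 2\psi(2^n) + \mathcal{E}_n, \qquad \mathcal{E}_n := \sum_{k \ne 0} c_k \,\overline{\hat\mu(k 2^n)},
\]
where an explicit computation gives $|c_k| \le \min(2\psi(2^n),\, 1/(\pi|k|))$. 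The constant term $2\psi(2^n)$ realises the conjectured threshold, so the task reduces to bounding $\mathcal{E}_n$ by $2^{-\log n/(\log\log n \cdot \log\log\log n)} \psi(2^n)^\gamma$.

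The core of the argument is controlling $|\hat\mu(k 2^n)|$ for non-zero integers $k$. The self-similarity of $\mu$ gives the product expansion $|\hat\mu(\xi)| = \prod_{j \ge 1} |\cos(2\pi\xi/3^j)|$, so for $\xi = k 2^n$ the $j$-th factor is non-trivial precisely when the fractional part $\{k 2^n/3^j\}$ is bounded away from $0$ and $1/2$. This is exactly the $\times 2, \times 3$ phenomenon highlighted in the abstract: the interaction between the base-$3$ structure of $\mu$ and the base-$2$ frequencies $k 2^n$. Invoking Baker's theorem on linear forms in logarithms, in a quantitative form such as that of Laurent--Mignotte--Nesterenko, one obtains an effective lower bound on $|n \log 2 - j \log 3|$, and aggregating its consequences over a critical window of length $\asymp \log n/(\log\log n \cdot \log\log\log n)$ of admissible $j$'s produces a corresponding quantitative decay of $|\hat\mu(k 2^n)|$ throughout the range of $k$ that dominates $\mathcal{E}_n$. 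Splitting the Fourier sum at $|k| \asymp 1/\psi(2^n)$ (where one passes from the low-frequency bound $|c_k| \le 2\psi(2^n)$ to the high-frequency bound $|c_k| \le 1/(\pi|k|)$) and using the trivial ball bound \eqref{UpperMeasure} to account for the effective number of frequencies carrying mass then gives the required control on $\mathcal{E}_n$.

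The main obstacle is this Diophantine step: genuine transcendence-theoretic input is needed to convert the product formula into usable decay of $|\hat\mu(k 2^n)|$, and the slow factor $2^{-\log n/(\log\log n \cdot \log\log\log n)}$ precisely reflects what is achievable with currently known effective Baker-type bounds. Removing it, which would close the gap to Conjecture~\ref{main conjecture}, would seem to demand genuine progress on Furstenberg's $\times 2, \times 3$ conjecture itself.
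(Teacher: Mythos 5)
Your overall plan --- majorise $\mathbf{1}_{A_n}$ by a periodic function, pass to Fourier coefficients, exploit the product formula for $\hat\mu$ and a Baker-type Diophantine input, then finish with Borel--Cantelli --- is conceptually in the same family as the paper's argument. However, the sketch as written has several genuine gaps, and the paper routes around exactly these difficulties in ways your proposal does not anticipate.

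First, the sharp cut-off $\chi_n$ is not a usable majorant. Its Fourier coefficients decay only like $1/(\pi|k|)$, while $\hat\mu(\xi)$ does \emph{not} tend to zero as $\xi\to\infty$ (for example $|\hat\mu(3^N)|$ is bounded below by a fixed positive constant), so the series $\sum_{k\neq 0} c_k\,\overline{\hat\mu(k2^n)}$ does not converge absolutely and the interchange of sum and integral is unjustified. The paper avoids this by using a smooth bump $\phi$ so that $\hat\phi$ has rapid decay and the frequency sum can be truncated cleanly (see \eqref{RapidDecay} and Lemma~\ref{FOURIERCOUNT}). You would need, at minimum, to replace $\mathbf{1}_{[-\delta_n,\delta_n]}$ by a Selberg/Fej\'er or bump-function majorant.

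Second, even after smoothing, the frequency range you would be left with is $|k|\ll 1/\psi(2^n)$, which is typically far larger than what a digit-change or Baker input can handle. The paper's crucial preparatory step is Lemma~\ref{DroppingDown}, which converts the count at scale $\psi(2^n)/2^n$ into a count at the coarser scale $\approx 2^{-n-k_n}$, where $k_n\approx \log n/(\log\log n\cdot\log\log\log n)$; after this, the Fourier analysis of Lemma~\ref{FinalCount} only needs to control $O(2^{k_n})$ frequencies. Without such a reduction, at frequencies $|k|\approx 1/\psi(2^n)$ the quantity $D_2(2^{n+1}|k|)$ can already be of size $\log_2(1/\psi(2^n))$, which can exceed $\log n/\log\log n$, and the lower bound on $D_3(2^{n+1}|k|)$ coming from Theorem~\ref{digit changes theorem} degrades to nothing.

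Third, the Diophantine step is aimed at the wrong target. You propose to use Baker-type lower bounds for $|n\log 2 - j\log 3|$, but the product $|\hat\mu(k2^n)|=\prod_j|\cos(2\pi k2^n/3^j)|$ depends on the \emph{ternary digits of} $k2^n$, not merely on whether $2^n/3^j$ is close to $1$. In particular the estimate must be \emph{uniform in $k$} over the full frequency range, and a two-term linear form in $\log 2,\log 3$ cannot see the factor $k$. The paper instead applies the Stewart/Bugeaud--Cipu--Mignotte digit-change theorem (Theorem~\ref{digit changes theorem}) to $y=2^{n+1}|m|$, and the decisive trick --- which your sketch does not contain --- is that $D_2(2^{n+1}|m|)\le 1 + \log_2|m|$ because appending zeros in base $2$ does not create digit changes; combined with $D_2+D_3\gg \log\log y/\log\log\log y$ this yields a lower bound on $D_3(2^{n+1}|m|)$ that is uniform for $|m|\le 2^{k_n}T$ (see Lemma~\ref{FinalCount} and the step \eqref{D3bound}). ``Aggregating consequences over a window of admissible $j$'s'' is not a substitute for this mechanism, and as written no quantitative decay of $|\hat\mu(k2^n)|$ is actually established. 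So the heart of the convergence theorem --- the passage from Baker's theorem to a savings of $2^{-\log n/(\log\log n\cdot\log\log\log n)}$ --- is missing from your proposal, not merely compressed.
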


\begin{remark} \label{ConvergenceRemark} We may deduce Proposition \ref{basic proposition} from Theorem \ref{main convergence theorem} via two observations. First, by replacing $\psi$ by $\psi_0: y \mapsto \min \{3/4, \psi(y) \}$, we may assume that $\psi(2^n) < 1$ for all $n$, since
\[
\| 2^n \alp \| < \psi(2^n) \quad \Longleftrightarrow \quad \| 2^n \alp \| < \psi_0(2^n).
\]
Second, observe that
\[
2 \psi(2^n)^\gam > 2^{-\log n / (\log \log n \cdot \log \log \log n)} \psi(2^n)^\gamma+\psi(2^n).
\]
Thus, assuming \eqref{BenchmarkSeries}, it follows that \eqref{main convergence theorem sum} also holds and we may apply Theorem \ref{main convergence theorem} to deduce Proposition \ref{basic proposition}. The improvement is super-logarithmic assuming $\sum_{n=1}^\infty \psi(2^n) < \infty$. For instance, \eqref{main convergence theorem sum} holds for $\psi(2^n) = (\log n)^{\alpha} n^{-1/\gam}$
whenever $\alpha \in \mathbb R$, whereas \eqref{BenchmarkSeries} fails even for $\psi(2^n) = n^{-1/\gam}$. 
\end{remark}

From Theorem \ref{ThmLSV}, we know that for $\mu$-almost every $\alp\in K$ the inequality
\begin{align} \label{special case triadic}
\| 3^n \alp \| &< n^{-\log 3 / \log 2}
\end{align}
admits infinitely many solutions $n \in \bN$, where $\|x\|$ denotes the distance from $x$ to the nearest integer for $x \in \R$. However, we see from Theorem \ref{main convergence theorem} that for $\mu$-almost every $\alp$ the inequality
\begin{equation} \label{SpecialCase}
\| 2^n \alp \| < n^{-\log 3 / \log 2}
\end{equation}
has at most finitely many solutions. Thus, the behaviour is very different in the case of dyadic approximation in the Cantor set. Observe that it is not possible to obtain this conclusion by only using Proposition \ref{basic proposition}, as has already been indicated in Remark \ref{ConvergenceRemark}. In this sense, the additional super-logarithmic decaying factor
	\[
	2^{-\log n / (\log \log n \cdot \log \log \log n)}
	\] 
	marks a significant difference. 
	
The difference in behaviour of dyadic and triadic approximation here is further emphasised by the following theorem due to Bugeaud \cite[Theorem 7.17]{Bugeaud distribution mod 1 ref} which, despite the differing metric statements, asserts that for each of the equations \eqref{special case triadic} and \eqref{SpecialCase} there exist uncountably many real numbers in the middle-third Cantor set for which these equations are satisfied only finitely often.

\begin{theorem}[Bugeaud \cite{Bugeaud distribution mod 1 ref}] \label{Bugeaud 7.17}
There exists a positive real number $c$, and uncountably many real numbers $x \in K$ which are badly approximable and, for all integers $b \geq 2$ and $n \geq 1$, satisfy
\[\|b^n x\| > b^{-cb(\log{b})}.\]
\end{theorem}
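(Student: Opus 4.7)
The plan is to construct, via a Cantor-in-Cantor scheme inside $K$, a compact perfect set $\widetilde{K} \subset K$ every point of which is badly approximable and satisfies $\|b^n x\| > b^{-cb \log b}$ for all integers $b \ge 2$ and $n \ge 1$. Parameterize $K$ by ternary expansions $x = \sum_{i \ge 1} \epsilon_i 3^{-i}$ with $\epsilon_i \in \{0,2\}$, so that a level-$m$ basic cylinder corresponds to a prefix $(\epsilon_1, \ldots, \epsilon_m)$ and has diameter $3^{-m}$. Build a nested sequence $E_1 \supset E_2 \supset \cdots$ of unions of \emph{admissible} level-$m$ cylinders, where admissibility at stage $m$ demands (i) the cylinder is at distance $\ge \kappa q^{-2}$ from every rational $p/q$ with $q \le Q_m$, forcing a uniform bound on the partial quotients of every limit point and hence the badly-approximable conclusion; and (ii) every $x$ in the cylinder satisfies $\|b^n x\| > b^{-cb\log b}$ for all $(b,n)$ with $b \le B_m$ and $n \le N_m$. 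Here $Q_m, B_m, N_m \to \infty$ are parameters and $\kappa, c$ are absolute constants to be tuned. If the scheme permits both children of every admissible cylinder to remain admissible infinitely often, then $\widetilde{K} = \bigcap_m E_m$ is a compact perfect subset of $K$, hence uncountable, and every point works.

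Condition (i) removes only $O(1)$ cylinders per level for $\kappa$ small enough, by a classical Minkowski-type counting of rationals of controlled denominator in a short interval. For (ii), the bad set $\{x \in [0,1] : \|b^n x\| \le b^{-cb\log b}\}$ is a union of $b^n + 1$ intervals of length $2 b^{-n-cb\log b}$; combining this with the Ahlfors-regularity estimate $\mu(B(z,r)) \ll r^\gamma$ from \eqref{UpperMeasure}, its $\mu$-measure inside a level-$m$ cylinder is $O(3^{-m} \cdot b^{-cb\log b})$ when $b^n \ge 3^m$, and $O(b^{n(1-\gamma)} \cdot b^{-cb\log b \cdot \gamma})$ otherwise. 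Summed over $(b,n)$ with $b \le B_m$ and $n \le N_m$, we need the total excluded fraction per cylinder to stay below a fixed threshold (say $1/3$) so that both of the two children of every admissible cylinder remain admissible, which in turn ensures that $\widetilde K$ is perfect.

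The main obstacle is that the naive sum for (ii) grows like $N_m \sum_{b \ge 2} b^{-cb\log b}$, so taking $N_m \to \infty$ seems to force the exclusion to exceed the available room; this reflects the genuine fact that for fixed $b$ and $\epsilon$, the set $\{x \in K : \|b^n x\| > \epsilon \text{ for all } n\}$ has $\mu$-measure zero, so a purely measure-theoretic estimate cannot close the argument. The resolution is to exploit the $b$-adic hierarchical structure of the bad sets: the excluded sets at successive values of $n$ do not sit in general position but nest in a $b$-adic fashion (each excluded interval at level $n+1$ is contained in a small $b$-adic refinement of one at level $n$), so the union over $n \le N_m$ is essentially controlled by the worst single level, giving an $N_m$-independent bound of the form $C \sum_{b \ge 2} b^{-cb\log b \cdot \gamma}$. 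Choosing $c$ large enough makes this tail arbitrarily small, and the inductive construction then goes through; the resulting $\widetilde{K}$ contains uncountably many $x$ satisfying both properties of the theorem.
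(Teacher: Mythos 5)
The theorem you are proving is not actually proved in this paper: it is cited verbatim from Bugeaud's book \cite{Bugeaud distribution mod 1 ref} (Theorem 7.17 there), so there is no in-paper argument to compare against. Evaluated on its own merits, your proposal has the right general shape (a Cantor-in-Cantor construction inside $K$), but the step you flag as ``the main obstacle'' is resolved by a claim that is false.

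You assert that the exclusion sets $E_n^{(b)} := \{x : \|b^n x\| \le \delta_b\}$ ``nest in a $b$-adic fashion,'' so that $\bigcup_{n \le N} E_n^{(b)}$ is ``essentially controlled by the worst single level.'' This is not the case. The set $E_{n+1}^{(b)}$ is a union of balls around \emph{all} rationals $j/b^{n+1}$; for $j$ not divisible by $b$, the centre $j/b^{n+1}$ lies at distance at least $b^{-(n+1)}$ from any $k/b^n$, far outside the radius $\delta_b b^{-n}$ of the corresponding ball in $E_n^{(b)}$. So the bulk of the new exclusion at level $n+1$ is disjoint from $E_n^{(b)}$, and the union genuinely grows: for Lebesgue measure one has $\lambda(\bigcup_{n \le N} E_n^{(b)}) = 1 - (1-2\delta_b)^N \to 1$, and a similar growth (to full $\mu$-measure) holds on $K$. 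Consequently there is no $N_m$-independent measure bound of the kind you need, and the inductive step as written does not close. The correct mechanism, used in Bugeaud's proof and in related constructions, is local rather than global: at each ternary depth $m$ one shows that only $O(1)$ of the $\asymp m$ relevant $(b,n)$ constraints can actually ``hit'' the two ternary children of the current cylinder, because each active constraint is an interval of length comparable to $3^{-m}$ and the children are disjoint intervals of that length separated by a gap; choosing $\delta_b = b^{-cb\log b}$ with $c$ large makes the number of $b$'s active at a given depth summably small. This is a combinatorial/pigeonhole argument scale by scale, not a cumulative measure estimate over all $n$, and that is the idea your write-up is missing. With that repair, the badly-approximable part of condition~(i) also needs the same local treatment (à la Schmidt games) rather than a measure bound, but that is standard.
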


\noindent Here we ask the following complementary question.

\begin{qn} Does there exist $\alp \in K \setminus \bQ$ such that \eqref{SpecialCase} has infinitely many solutions?
\end{qn}

Complementing Theorem \ref{main convergence theorem}, we prove the following statement towards the divergence part of Conjecture \ref{main conjecture}.

\begin{theorem}[Main Divergence Theorem] \label{main divergence theorem}
For $\psi(2^n) = 2^{- \log \log n / \log \log \log n}$, we have \mbox{$\mu(W_2(\psi)) = 1$.}
\end{theorem}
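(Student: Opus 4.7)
The plan is to show $\mu(\limsup_n A_n) = 1$, where
\[
A_n = \{\alpha \in [0,1] : \|2^n \alpha\| < \psi(2^n)\},
\]
via a second-moment / quasi-independence argument, followed by a zero-one law to upgrade a positive-measure conclusion to full measure. Note that the prescribed $\psi(2^n) = 2^{-\log\log n/\log\log\log n}$ satisfies $\sum_n \psi(2^n) = \infty$ very comfortably, so divergence of the Borel--Cantelli series is automatic and the work lies in the quasi-independence step.

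First I would write $\mathbf{1}_{A_n}(\alpha) = g_n(2^n\alpha)$, with $g_n$ the $1$-periodic indicator of $(-\psi(2^n), \psi(2^n))$, and Fourier-expand
\[
g_n(t) = 2\psi(2^n) + \sum_{k \neq 0} c_k(n) e^{2\pi i k t}, \qquad |c_k(n)| \le \min\bigl(2\psi(2^n),\, (\pi|k|)^{-1}\bigr).
\]
Integrating against $\mu$ gives
\[
\mu(A_n) = 2\psi(2^n) + \sum_{k \neq 0} c_k(n)\, \widehat{\mu}(-k \cdot 2^n),
\]
with an analogous two-parameter expansion for $\mu(A_n \cap A_m)$ whose leading term is $4\psi(2^n)\psi(2^m)$. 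The two concrete targets are then: (i) a uniform lower bound $\mu(A_n) \gg \psi(2^n)$ for $n$ large; and (ii) a quasi-independence estimate $\mu(A_n \cap A_m) \ll \psi(2^n)\psi(2^m)$, valid at least on average over pairs $n < m \le N$. Granted (i) and (ii), the Chung--Erd\H os inequality (equivalently, Proposition~2 of \cite{BDV ref}) yields $\mu(\limsup_n A_n) > 0$, and an ergodic-theoretic zero-one law using the ergodicity of the triadic shift $T_3 : x \mapsto 3x \bmod 1$ on $(K, \mu)$ then promotes this to full measure.

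The main obstacle is controlling the Fourier errors in (i) and (ii), i.e.\ the tails $\sum_{k \ne 0} c_k(n)\, \widehat{\mu}(-k \cdot 2^n)$ and their two-parameter analogue. The Cantor measure is $\times 3$ self-similar, so
\[
\widehat{\mu}(\xi) = e^{-\pi i \xi}\prod_{j \ge 1}\cos(2\pi\xi/3^j)
\]
does not decay at infinity and in fact has absolute value bounded below on every triadic integer. The rescue is precisely the $\times 2 \times 3$ phenomenon emphasised throughout the paper: since $2$ is a primitive root modulo every power of $3$, the values $\{k \cdot 2^n /3^j\}$ as $j$ varies are $3$-adically generic, the cosine factors $\cos(2\pi k\cdot 2^n/3^j)$ cannot all cluster near $\pm 1$, and one obtains a (slow) averaged decay bound for $|\widehat{\mu}(k\cdot 2^n)|$. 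The very slow decay of $\psi(2^n)$ in the hypothesis is calibrated exactly to dominate this weak Fourier decay, and the success of the proof will hinge on making this quantitative match explicit and robust enough to survive the double sum appearing in the second-moment estimate.
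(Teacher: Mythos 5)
Your high-level architecture (lower bound for $\mu(A_n)$, quasi-independence via second moments, Chung--Erd\H{o}s, then upgrade to full measure) matches the paper's, but there are two substantive gaps. First, the zero-one law you propose does not apply: $\limsup_n A_n$ is invariant under the doubling map $x \mapsto 2x \bmod 1$ but \emph{not} under the triadic shift $T_3$, and $\mu$ is $T_3$-ergodic but not $T_2$-invariant, so neither pairing of map and measure gives an ergodic zero-one law. The paper instead localises from the start: it works with the normalised restriction $\mu_I$ on balls $I=B(z,r)$, $z\in K$, proves $\mu_I(\limsup_n A_n)\gg 1$ uniformly in $I$, and then invokes the full-measure criterion for doubling measures (Lemma~\ref{positive measure lemma}, i.e. Proposition~1 of \cite{BDV ref}). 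This is why Lemmas~\ref{connection}, \ref{MeasureOfInterval}, \ref{FOURIERCOUNT}, and \ref{FinalCount} are all stated with a general interval $I$.

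Second, and more seriously, your claim that ``the values $\{k\cdot 2^n/3^j\}$ are $3$-adically generic'' because $2$ is a primitive root modulo powers of $3$ is a heuristic, not a proof, and it does not yield the averaged decay of $\widehat{\mu}(k\cdot 2^n)$ that the argument needs. Controlling the self-similar product $\prod_j \cos(2\pi k\cdot 2^n/3^j)$ (in the paper, $\prod_j \frac{1+e(2^{n+1}m/3^j)}{2}$) amounts to lower-bounding the number of ternary digit changes of $2^{n+1}|m|$, and the only effective tool for this is the Stewart / Bugeaud--Cipu--Mignotte theorem (Theorem~\ref{digit changes theorem}, via Baker's theorem on linear forms in logarithms), which gives only $D_2(y)+D_3(y)\gg \log\log y/\log\log\log y$. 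This weakness is precisely why $\psi(2^n)=2^{-\log\log n/\log\log\log n}$ appears in the statement, and why the quasi-independence estimate $\mu_I(A_n\cap A_m)\ll 2^{n-m}\psi(2^m)+\psi(2^n)\psi(2^m)$ is proved only for $m$ in the short window $n\le m\le n^+ = n+\lfloor\log n/(\log\log n\cdot\log\log\log n)\rfloor$; the Chung--Erd\H{o}s inequality is then applied over this restricted range, with the window length and the decay of $\psi$ both calibrated to Stewart's bound. Without importing the digit-change theorem and threading it through the parameters as Lemma~\ref{FinalCount} does, the Fourier error terms cannot be controlled, so as written your proposal leaves the central difficulty open.
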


In Section \ref{conditional results section}, we also provide some additional conditional results and empirical evidence which provide further support in favour of Conjecture \ref{main conjecture}. We will see in particular that a modest refinement of a key estimate, namely \eqref{binary ternary digit changes inequality} below, would lead to a sharp convergence theory. Moreover, consider the special case 
\[
\varphi_a: 2^n \mapsto n^{-a},
\]
for $a \ge 0$. It is clear that
\[
\mu(W_2(\varphi_0)) = 1,
\]
and we know from Theorem \ref{main convergence theorem} that whenever $a \geq 1/\gamma$ we have
\[
\mu(W_2(\varphi_a)) = 0.
\]
We shall see conditionally, in Theorems \ref{conditional convergence theorem} and \ref{conditional divergence theorem}(\ref{a}), that
\[
\mu(W_2(\varphi_a)) = \begin{cases}
0, &\text{if } a > 1 \\
1, &\text{if } a \le 1.
\end{cases}
\]

\subsection{Main ideas behind the proofs and some preliminaries}

The key to proving results of the flavour we are considering is an understanding of how dyadic rationals are distributed with respect to the middle-third Cantor set. This is not an easy task, and is a variant of the ``times two, times three'' phenomenon (see, for example, \cite{Furstenberg67, Furstenberg}). Nevertheless, a fairly straightforward counting argument enables us to deduce enough information to establish Proposition \ref{basic proposition}. Refining the argument using Fourier analysis, we are also able to establish Theorems \ref{main convergence theorem} and \ref{main divergence theorem}. The relevance of counting dyadic rationals near the middle-third Cantor set will be described in greater detail in Section \ref{dyadics section}.

In Section \ref{dyadics section}, we see via Fourier analysis that the problem at hand is intimately connected to the number of binary and ternary ``digit changes'' in numbers of the form $2^{n}m$, where $n, m \in \N$. For $b \in \Z$ such that $b \ge 2$ and $y\in\mathbb{R},$ let $D_{b}(y)$ denote the number of digit changes in the $b$-adic expansion of $y$; that is, $D_{b}(y)$ denotes the number of consecutive pairs of distinct digits in the $b$-adic expansion of $y$. We want to understand the sum $D_2(y)+D_3(y).$ To this end, we will make use of the following inequality, originally due to Stewart \cite{Stewart ref} and extended by Bugeaud, Cipu, and Mignotte \cite{Bugeaud-Cipu-Mignotte}. Integers $a$ and $b$ are \emph{multiplicatively independent} if, for any $m, n \in \bZ$, we have that $a^m=b^n$ implies that $m=n=0$. The statement as written below can be found in \cite[Theorem~6.9]{Bugeaud distribution mod 1 ref}. 

\begin{theorem}[Stewart \cite{Stewart ref}, Bugeaud--Cipu--Mignotte \cite{Bugeaud-Cipu-Mignotte}] \label{digit changes theorem}
Let $a$ and $b$ be multiplicatively independent integers. Then, there exists an effectively computable integer $c$, which depends only on $a$ and $b$, such that for every natural number $n \geq 20$, we have 
\begin{align*}
D_a(n)+D_b(n) \geq \frac{\log{\log{n}}}{\log{\log{\log{n}}}+c}-1.
\end{align*}
\end{theorem}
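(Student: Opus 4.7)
The plan is to reproduce the strategy of Stewart, refined by Bugeaud--Cipu--Mignotte, whose engine is lower bounds for linear forms in logarithms (Baker's method). The starting point is a compression step: if the base-$a$ expansion of $n$ consists of $k_a + 1$ maximal runs of equal digits, then using the telescoping identity
\[
\sum_{i=\ell}^{r} d \cdot a^{i} = \frac{d(a^{r+1} - a^{\ell})}{a-1},
\]
I would write $(a-1)n$ as an integer linear combination of at most $2(k_a + 1)$ distinct powers of $a$, with each coefficient bounded in absolute value by $a-1$. The same construction in base $b$ expresses $(b-1)n$ as an integer combination of at most $2(k_b + 1)$ powers of $b$. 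The hypothesis that $a$ and $b$ are multiplicatively independent will be crucial later, because it guarantees that the linear form $N \log a - M \log b$ does not vanish identically.

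Next I would multiply through by $(b-1)$ and $(a-1)$ respectively and equate, obtaining an $S$-unit style relation with at most $2(k_a + k_b + 2)$ terms involving powers of $a$ on one side and powers of $b$ on the other. From this relation, the plan is to isolate dominant terms on each side so that a comparison of leading terms yields
\[
\bigl| N \log a - M \log b - \log C \bigr| \ll a^{-\Delta},
\]
where $C$ is a rational of moderate height and $\Delta$ is an exponent comparable to $\log n$. Baker's theorem in a sharp form (Matveev's estimate, or the Laurent--Mignotte--Nesterenko version used in \cite{Bugeaud-Cipu-Mignotte}) produces a matching lower bound roughly of the shape $(\log n)^{-C'(k_a + k_b)}$, with $C'$ depending effectively on $a$ and $b$. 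Juxtaposing the upper and lower bounds and rearranging would deliver
\[
k_a + k_b \;\gg\; \frac{\log\log n}{\log\log\log n + c}
\]
with an explicitly computable $c$, which is the claimed inequality.

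The main obstacle will be the possibility of cancellation among the leading terms when extracting the ``primary'' linear form in logarithms: one cannot simply pair off the largest power of $a$ with the largest power of $b$, because the approximation $A a^N \approx B b^M$ might be spoiled by the next-largest terms if the coefficients conspire. The way I would handle this, following Stewart, is by an inductive decomposition of the $S$-unit equation: at each step one selects a pair of dominant terms that provably do not cancel, replaces the equation by a smaller one involving fewer effective terms, and iterates. The Bugeaud--Cipu--Mignotte improvement arises from using a $p$-adic refinement of Baker's inequality to control this iteration more efficiently, producing the factor $\log\log\log n$ in the denominator rather than a larger power of $\log\log n$. Care is also needed to ensure that $n \ge 20$ is sufficient for the error terms absorbed into the constant $c$ to remain under control.
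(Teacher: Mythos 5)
The paper does not prove Theorem \ref{digit changes theorem}; it cites it from Stewart and Bugeaud--Cipu--Mignotte and remarks only that the proof ``uses Baker's work on linear forms in logarithms.'' The closest the paper comes to a proof of a result of this type is Theorem \ref{LangConsequence}, where it says explicitly ``We follow Stewart's approach'' and carries out the conditional version assuming Lang--Waldschmidt. Comparing your sketch against that implementation reveals a structural mismatch. The engine you invoke (telescoping compression of digit runs, a resulting power-of-$a$ vs.\ power-of-$b$ relation, Baker's theorem, with multiplicative independence ensuring non-vanishing) is correct. But the mechanism by which the bound $\log\log n / (\log\log\log n + c)$ emerges is not a single application of Baker to an $S$-unit relation with $O(k_a+k_b)$ terms. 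Rather, one fixes a sequence of cut points $\theta_1 < \theta_2 < \cdots < \theta_F$ in the digit expansion; if every window $[\theta_j, \theta_{j+1})$ captures a digit change in at least one base, a pigeonhole immediately gives $D_a(n) + D_b(n) \ge F$, and otherwise the digits are constant (in both bases) across some window, which yields a \emph{four-term} linear form $\log A_1 + (r-\theta_j)\log a - \log B_1 - (t-\theta_j)\log b$ bounded above by $a^{5-\theta_{j+1}}$. Baker then forces $\theta_{j+1} \ll \theta_j + \log\log n$, so one may pack $F \asymp \log\log n / (\log\log\log n + c)$ such cut points into $[1, \log n]$ before reaching the top of the expansion. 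The factor you want comes from counting the cut points, not from the Baker exponent.

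Two specific steps in your proposal would fail. First, you cannot ``isolate dominant terms on each side'' to obtain a linear form with $\Delta$ comparable to $\log n$: the exponent gap available after the compression is governed by the length of the longest digit-change-free window, which is only $\gg \log n / (k_a + k_b)$ by pigeonhole, and in fact Stewart's argument does not try to find a single long gap at all but rather iterates over many short ones. Second, the asserted Baker lower bound of shape $(\log n)^{-C'(k_a + k_b)}$ does not match what the effective estimates give in this setting --- the number of logarithms in the relevant linear form is bounded (four), so Baker's exponent is a fixed constant $C$ independent of $k_a + k_b$; the $(k_a + k_b)$-dependence enters through the cut-point count, not the exponent. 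Indeed, if both of your claimed estimates held simultaneously, the comparison would yield $D_a(n) + D_b(n) \gg \sqrt{\log n / \log\log n}$, a bound far stronger than the theorem and essentially equivalent to a positive answer to the paper's open Question \ref{RGSC} --- a signal that the sketch, as written, is not sound. Your remark attributing the Bugeaud--Cipu--Mignotte refinement to a $p$-adic Baker bound also does not correspond to anything the paper draws upon.
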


\noindent This is a deep result which uses Baker's work on linear forms in logarithms \cite{Baker}. The following more specialised statement follows easily from Theorem \ref{digit changes theorem}. 

\begin{lemma} \label{digit changes lemma}
For sufficiently large $n \in \N$, we have 
\begin{align} \label{binary ternary digit changes inequality}
D_{2}(n)+D_{3}(n) \gg \frac{\log{\log{n}}}{\log{\log{\log{n}}}},
\end{align} 
where the implicit constant is absolute.
\end{lemma}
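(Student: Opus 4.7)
The plan is to deduce Lemma \ref{digit changes lemma} as a direct corollary of Theorem \ref{digit changes theorem}. First, I would verify the hypothesis by observing that $2$ and $3$ are multiplicatively independent integers: if $2^m = 3^n$ for some integers $m,n$, then unique factorisation forces $m=n=0$. Hence Theorem \ref{digit changes theorem} applies with $a=2$ and $b=3$, producing an effectively computable absolute constant $c = c(2,3)$ such that for every natural number $n \geq 20$,
\[
D_2(n) + D_3(n) \geq \frac{\log\log n}{\log\log\log n + c} - 1.
\]

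The remainder of the proof is simply bookkeeping: I need to absorb the additive constant $c$ in the denominator and the subtracted $1$ into the implied constant. For $n$ large enough that $\log\log\log n \geq c$, we have $\log\log\log n + c \leq 2\log\log\log n$, and therefore
\[
D_2(n) + D_3(n) \geq \frac{\log\log n}{2\log\log\log n} - 1.
\]
Since $\log\log n / \log\log\log n \to \infty$ as $n \to \infty$, for all sufficiently large $n$ the subtracted $1$ is dominated by, say, half of the main term, yielding
\[
D_2(n) + D_3(n) \geq \frac{\log\log n}{4\log\log\log n},
\]
which is the claimed bound with absolute implied constant.

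There is no real obstacle here; all the deep input has already been imported through Theorem \ref{digit changes theorem}, which itself rests on Baker's theory of linear forms in logarithms. The only care needed is to ensure that the constant $c$ appearing in Theorem \ref{digit changes theorem}, which is stated as depending on $a$ and $b$, becomes absolute once $a=2$ and $b=3$ are fixed, which is exactly what is needed so that the implied constant in \eqref{binary ternary digit changes inequality} is absolute.
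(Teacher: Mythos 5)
Your deduction is correct and matches what the paper intends; the paper simply asserts that the lemma "follows easily" from Theorem \ref{digit changes theorem}, and your bookkeeping (noting multiplicative independence of $2$ and $3$, then absorbing the additive constant $c$ and the subtracted $1$ for large $n$) is exactly the routine argument being elided.
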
 

\noindent \textbf{Notation.} We use the Bachmann--Landau and Vinogradov notations throughout: for functions $f$ and positive-valued functions $g$, we write $f \ll g$, or $g \gg f$, or $f = O(g)$, if there exists a constant $C>0$ such that $|f(x)| \le C g(x)$ for all $x$.

In addition to our proofs relying heavily on Fourier analysis and the bounds on base 2 and base~3 digit changes discussed above, we will make use of both the standard convergence Borel--Cantelli Lemma and the Chung--Erd\H{o}s inequality from probability.

\begin{lemma}[Convergence Borel--Cantelli Lemma] \label{first Borel-Cantelli}
Let $(\Omega,\mathcal{A},m)$ be a finite measure space and let $\{E_n\}_{n\geq 1} \subset  \mathcal{A}$ be a sequence of $m$-measurable sets in $\Omega$. 
If
\[\sum_{n=1}^{\infty}{m(E_n)} < \infty,\]
then
\[m(\limsup_{n \to \infty}{E_n}) = 0.\]
\end{lemma}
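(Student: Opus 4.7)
The plan is to use the standard $\limsup$ representation together with countable subadditivity of the measure, and then exploit the convergence of the tails of a convergent series. First I would unpack the definition
\[
\limsup_{n \to \infty} E_n = \bigcap_{k=1}^\infty \bigcup_{n=k}^\infty E_n,
\]
so that for every fixed $k \in \N$ we have the inclusion $\limsup_{n \to \infty} E_n \subseteq \bigcup_{n=k}^\infty E_n$. Applying monotonicity and countable subadditivity of $m$ then yields
\[
m\Bigl(\limsup_{n \to \infty} E_n\Bigr) \le m\Bigl(\bigcup_{n=k}^\infty E_n\Bigr) \le \sum_{n=k}^\infty m(E_n).
\]

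The hypothesis that $(\Omega, \mathcal{A}, m)$ is a \emph{finite} measure space is only needed to ensure that each of the quantities above is well-defined and finite; in particular it guarantees $m(E_n) < \infty$ so that the partial sums and the limit manipulations below cause no trouble. Since $\sum_{n=1}^\infty m(E_n)$ converges by assumption, its tails satisfy
\[
\lim_{k \to \infty} \sum_{n=k}^\infty m(E_n) = 0.
\]

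Letting $k \to \infty$ in the chain of inequalities above gives $m(\limsup_{n \to \infty} E_n) = 0$, which is the desired conclusion. There is no real obstacle here: the argument is just the standard one, relying only on countable subadditivity and the elementary fact that the tail of a convergent series of non-negative terms tends to zero. The only point at which one might pause is to note that the inequality holds uniformly in $k$, so taking the infimum (or limit) over $k$ on the right-hand side is legitimate.
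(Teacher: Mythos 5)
Your proof is correct and is exactly the standard textbook argument for the convergence Borel--Cantelli lemma: express $\limsup_n E_n$ as $\bigcap_k \bigcup_{n\ge k} E_n$, bound $m(\limsup_n E_n)$ by the tail $\sum_{n\ge k} m(E_n)$ via monotonicity and countable subadditivity, and let $k\to\infty$. The paper states this lemma without proof, as it is a classical result, so there is no alternative approach in the paper to compare against.
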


For the proof of the divergence result Theorem \ref{main divergence theorem}, we use the  following inequality established by Chung and Erd\H{o}s in \cite[Equation~4]{Chung-Erdos ref}. This is similar in flavour to the divergence counterpart of the Borel--Cantelli Lemma (see e.g. \cite[Proposition 2]{BDV ref}).

\begin{lemma}[Chung--Erd\H{o}s Inequality] \label{Chung-Erdos}
Let $N\geq 1$ be an integer. Let $(\Omega, \mathcal{A}, m)$ be a probability space, and let  $\{E_n\}_{n=1}^{N} \subset  \mathcal{A}$ be an arbitrary sequence of $m$-measurable sets in $\Omega$. Then, if $m\left(\bigcup_{n=1}^{ N}{E_n}\right)>0$, we have 
\[m\left(\bigcup_{n=1}^{N}{E_n}\right) \geq \frac{\left(\sum_{s=1}^{N}{m(E_s)}\right)^2}{\sum_{s,t=1}^{N}{m(E_s \cap E_t)}}.\]
\end{lemma}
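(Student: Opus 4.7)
The plan is a direct second-moment argument: apply the Cauchy--Schwarz inequality to the nonnegative counting function $X := \sum_{n=1}^N \mathbf{1}_{E_n}$, which records how many of the events $E_n$ occur at each point of $\Omega$. Setting $U := \bigcup_{n=1}^N E_n$, the key observation is that $\{X > 0\} = U$, so $X = X \cdot \mathbf{1}_U$ pointwise. This is the decomposition that will convert a ratio of first and second moments into a lower bound on $m(U)$.

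First, apply Cauchy--Schwarz to the product $X \cdot \mathbf{1}_U$ under integration against $m$:
\[
\left( \int_\Omega X \, dm \right)^{2} = \left( \int_\Omega X \cdot \mathbf{1}_U \, dm \right)^{2} \le \int_\Omega X^2 \, dm \cdot \int_\Omega \mathbf{1}_U^2 \, dm = m(U) \int_\Omega X^2 \, dm.
\]
Next, expand by linearity (and Fubini for the second moment) to obtain
\[
\int_\Omega X \, dm = \sum_{n=1}^N m(E_n), \qquad \int_\Omega X^2 \, dm = \sum_{s,t=1}^N m(E_s \cap E_t),
\]
and rearrange. The only step warranting any attention is the nondegeneracy check before dividing: since $m(U) > 0$, and since the diagonal of the double sum already contributes $\sum_n m(E_n \cap E_n) = \sum_n m(E_n) \ge m(U) > 0$, the denominator is strictly positive and the division is legitimate. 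There is no substantive obstacle here --- the whole argument is three lines of the moment method, with the decomposition $X = X \cdot \mathbf{1}_U$ being the one conceptual ingredient.
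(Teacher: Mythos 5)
Your proof is correct, and it is the standard Cauchy--Schwarz (second-moment) derivation of the Chung--Erd\H{o}s inequality. Note, though, that the paper does not prove this lemma at all --- it is quoted from Chung and Erd\H{o}s \cite[Equation 4]{Chung-Erdos ref} --- so there is no in-paper argument to compare against. Your argument is complete as written: the identity $X = X\cdot\mathbf{1}_U$ with $U = \{X>0\}$, Cauchy--Schwarz, and linearity/Fubini for the two moments give exactly the stated bound, and your nondegeneracy check is sound (since $m(U)>0$ forces some $m(E_n)>0$, the diagonal of the double sum is already strictly positive).
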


In order to prove  the full measure statements given by Theorem \ref{main divergence theorem} and later by Theorem~\ref{conditional divergence theorem}, we first intersect our sets of interest with an arbitrary interval $I$ centred in the Cantor set and show that these intersections have measure proportional to the measure of the interval $I$. Once this has been achieved, the following lemma yields the full measure statements we ultimately desire since $\mu$ is doubling (see Section \ref{FourierAnalysis} for a definition). Lemma \ref{positive measure lemma}, as stated below, can be found in \cite[Proposition 1]{BDV ref}. We also refer the reader to \cite{BDV ref} for several other applications of Lemma \ref{positive measure lemma}. 

\begin{lemma} \label{positive measure lemma}
Let $(\Omega, d)$ be a metric space, and let $m$ be a finite doubling measure on $\Omega$ such that any open set is measurable. Let $E$ be a Borel subset of $\Omega$. Assume that there are constants $r_0, c > 0$ such that for any ball $B$ of radius $r(B) < r_0$ and centre in $\Omega$ we have that 
\[m(E \cap B) \geq c m(B).\]
Then $E$ has full measure in $\Omega$, i.e. $m(\Omega \setminus E)=0$.
\end{lemma}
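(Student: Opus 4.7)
The plan is to argue by contradiction using a Lebesgue-density argument for the doubling measure $m$. Set $F := \Omega \setminus E$, which is Borel since $E$ is. Suppose for contradiction that $m(F) > 0$. The goal is to produce a point $x \in F$ near which $F$ has density $1$, and then use the hypothesis $m(E \cap B) \ge c\, m(B)$ to show that $F$ has density at most $1 - c$ at $x$, a contradiction.

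First I would invoke the Lebesgue differentiation / density theorem for doubling measures on a metric space: for any Borel set $F$ with $m(F) < \infty$,
\[
\lim_{r \to 0^+} \frac{m(F \cap B(x,r))}{m(B(x,r))} = 1 \qquad \text{for } m\text{-almost every } x \in F.
\]
This is standard and follows from the Vitali-type covering theorem available in any doubling metric measure space; see for instance Heinonen, \emph{Lectures on Analysis on Metric Spaces}, Theorems 1.8 and 2.12. Since $m(F) > 0$, the set of such density points of $F$ is nonempty; pick one, call it $x \in F$.

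Next, the hypothesis of the lemma applied to every ball $B(x,r)$ with $r < r_0$ gives
\[
m(E \cap B(x,r)) \ge c\, m(B(x,r)).
\]
Because $E$ and $F$ are disjoint, $m(F \cap B(x,r)) = m(B(x,r)) - m(E \cap B(x,r)) \le (1 - c)\, m(B(x,r))$, hence
\[
\frac{m(F \cap B(x,r))}{m(B(x,r))} \le 1 - c < 1 \qquad \text{for all } r < r_0.
\]
Letting $r \to 0^+$ contradicts the density $1$ property of $x$, so the assumption $m(F) > 0$ must fail. Therefore $m(\Omega \setminus E) = 0$, as claimed.

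The only non-routine ingredient is the density theorem for doubling measures, which is the main place where the doubling hypothesis enters; the rest is pure bookkeeping, and there is no delicate interaction with the Cantor-set structure here. In the present paper the lemma will be applied with $\Omega = K$, $m = \mu$, and it is worth noting that, strictly speaking, only the restriction of $\mu$ to $K$ needs to be doubling, and balls should be taken with centres in $K$, which is precisely how the hypothesis is phrased.
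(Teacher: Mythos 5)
Your proof is correct. It is the standard Lebesgue--density argument for doubling measures: take a density point $x \in F := \Omega \setminus E$, use $m(F\cap B) = m(B) - m(E\cap B) \le (1-c)\,m(B)$ for small balls around $x$, and contradict density $1$. Worth noting: the paper does not actually prove this lemma; it is quoted verbatim from Proposition~1 of Beresnevich--Dickinson--Velani \cite{BDV ref}, and the argument there is of the same covering/density type, so there is no divergence in approach to report. One implicit point you should make explicit is that the Lebesgue differentiation theorem for a doubling measure asserts density $1$ only at $m$-a.e.\ point of $F$, and the density quotient is only meaningful where $m(B(x,r))>0$ for all $r>0$, i.e.\ at points of $\supp m$. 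Since $\Omega \setminus \supp m$ is $m$-null (doubling forces separability of the relevant part of the space, so the usual ``largest open null set'' argument applies) and $m(F)>0$, you can indeed select the density point in $F \cap \supp m$, after which the contradiction is clean. Your closing remark that only balls centred in $\Omega = K$ are required, and only the restricted measure needs to be doubling, is exactly how the lemma is phrased and how the paper later uses it with $m=\mu$ and $\Omega=K$.
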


\section{Dyadic rationals near the Cantor set} \label{dyadics section}

We recall a natural construction of the middle-third Cantor set. Set $K_0:=[0,1]$ and remove the open middle-third of this interval to obtain $K_1:=[0,\frac{1}{3}]\cup[\frac{2}{3},1]$. Subsequently, remove the open middle-third of each of these component intervals to obtain $K_2$ which will consist of 4 subintervals of length $3^{-2}$, and repeat this process so that $K_N$ consists of the $2^N$ intervals of length $3^{-N}$ which remain after removing the open middle-thirds from each of the component subintervals of $K_{N-1}$. Then $K = \bigcap_{N=0}^{\infty}{K_N}.$

For $N \in \bN$, let $\cL_N$ be the set of rational numbers of the form $\frac{b}{3^N}$ where $b$ is an integer such that $0 \leq b \leq 3^N$ and the ternary expansion of $b$ only contains 0's and 2's. Thus, $\cL_N$ corresponds to the left-most endpoints of the intervals comprising $K_N$. We also define $\cR_N:=\{x=1-y: y \in \cL_N\}$ to be the set of right-most endpoints of intervals in $K_N$. We write $\cC_N := \cL_N \cup \cR_N$ to denote the set of all endpoints of intervals comprising $K_N$. Furthermore, note that $\cC_N$ is the set of rationals in $[0,1]$ with denominator $3^N$ which lie in the Cantor set. For $n, M \in \bN$, let
\begin{align*}
A_n := \bigcup_{a = 0}^{2^n} B\left(\frac{a}{2^n}, \frac{\psi(2^n)}{2^n}\right), \quad \text{and} \quad
A_n(M) := \bigcup_{a = 0}^{2^n} B\left(\frac{a}{2^n}, \frac{1}{3^{M}}\right). 
\end{align*}

Observe that if $N \in \N$ and $x \in \cC_N$ then $\mu \left(B\left(x, \frac{3^{-N}}{2}\right)\right)  = 2^{-(N+1)}$. Indeed, each component interval in $K_{N}$ has $\mu$-measure $2^{-N}$, and $B\left(x, \frac{3^{-N}}{2}\right)$ contains half of such an interval and intersects no others. Moreover, these balls are disjoint. Similarly, for any $x \in \bR$ we have \mbox{$\mu (B(x, 3^{-N})) \le 2^{-(N-1)}$},  since $B(x, 3^{-N})$ intersects at most two of the component intervals in $K_{N}$.

As alluded to earlier, in several of the lemmas below, we will intersect our sets of interest with an interval $I = B(z,r)$, where $z \in K$ and $r > 0$. This generality will be needed in the divergence theory in order to obtain full-measure results, as opposed to just positive-measure results, and is standard in metric Diophantine approximation (for several examples, see \cite{BDV ref}). The idea is that dyadic rationals should be evenly distributed in $K = \supp(\mu)$. For such an interval $I=B(z,r)$ or $I=[0,1]$, for $n \in \bN$ and $t \in (0, \infty)$, we write
\[
t A_n = \bigcup_{a = 0}^{2^n} B\left(\frac{a}{2^n}, t\frac{\psi(2^n)}{2^n}\right), \quad
tA_n(M) := \bigcup_{a = 0}^{2^n} B\left(\frac{a}{2^n}, \frac{t}{3^{M}}\right), \quad \text{and} \quad tI = B(z, tr).
\]
We adopt analogous notation for translates of $A_n$ and for $A_n(M)$, i.e. for $n \in \N$ and $\tet \in \R$,
\[A_n + \tet = \bigcup_{a = 0}^{2^n} B\left(\frac{a}{2^n}+\tet,\frac{\psi(2^n)}{2^n}\right) \quad \text{and} \quad A_n(M)+\tet = \bigcup_{a = 0}^{2^n} B\left(\frac{a}{2^n}+\tet, \frac{1}{3^{M}}\right).\]
Finally, we introduce the hybrid notations
\[
tA_n + \tet = \bigcup_{a = 0}^{2^n} B\left(\frac{a}{2^n}+\tet, t \frac{\psi(2^n)}{2^n}\right) \quad \text{and} \quad tA_n(M)+\tet = \bigcup_{a = 0}^{2^n} B\left(\frac{a}{2^n}+\tet, \frac{t}{3^{M}}\right),\]
for $n \in \bN$, $t \in (0, \infty)$, and $\tet \in \bR$.

We will assume throughout this section that $\psi(2^n) < 1$ for all $n \in \N$. This is not at all restrictive for the purposes of proving Proposition \ref{basic proposition}, Theorem \ref{main convergence theorem}, or Theorem \ref{main divergence theorem}. Indeed, for the first two of these statements we may replace $\psi$ by $y \mapsto \min \{ 3/4, \psi(y) \}$ as in Remark \ref{ConvergenceRemark}, and in Theorem~\ref{main divergence theorem} the condition is clearly met.

The $\mu$-measure of a union of balls can be estimated by counting nearby triadic rationals in $C_{N}$ for a sufficiently large $N \in \N$. This is formalised below.

\begin{lemma} \label{connection} Let $I = B(z,r)$, where $z \in K$ and $r>0$, \textbf{or} $I = [0,1]$. Let $n_0(I,\psi) \in \N$ be sufficiently large so that if $n \geq n_0(I, \psi)$ then $2^{-n}<r$. Let $n \ge n_0(I,\psi)$ and $N$ be positive integers such that $3^{-N} \le \frac{\psi(2^n)}{5 \cdot 2^n}$, and let $B_n$ be a translate of $A_n$. Then
\[
2^{-(N+1)} | \cC_N \cap 0.2 B_n \cap 0.2 I| \le \mu(B_n \cap I) \le 2^{-(N-1)} |\cC_N \cap 5 B_n \cap 5 I|.
\]
\end{lemma}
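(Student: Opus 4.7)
The plan is to prove both bounds via covering arguments, leveraging two explicit measure facts about the endpoints $\cC_N$ of level-$N$ Cantor components. First, for every $x\in\cC_N$ we have $\mu(B(x,3^{-N}/2))=2^{-(N+1)}$, and these balls are pairwise disjoint: two endpoints of a common $K_N$-component lie at distance exactly $3^{-N}$, while endpoints of adjacent components are separated by at least $3^{-N}$ (the smallest gap in the construction of $K_N$). Second, for any $x\in\R$, the ball $B(x,3^{-N})$ meets at most two $K_N$-components, so $\mu(B(x,3^{-N}))\le 2^{-(N-1)}$. Both facts are already quoted in the paragraph preceding the lemma statement. The dilation hypothesis $3^{-N}\le \psi(2^n)/(5\cdot 2^n)$ together with $2^{-n}<r$ (built into $n_0(I,\psi)$) is exactly what makes the radii $3^{-N}/2$ and $3^{-N}$ small compared with the radii defining $B_n$ and $I$, so that triangle-inequality containments work with the prescribed constants $0.2$ and $5$.

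For the lower bound, the plan is to show that for each $x\in\cC_N\cap 0.2B_n\cap 0.2I$ we have $B(x,3^{-N}/2)\subset B_n\cap I$. If $c$ denotes the centre of the constituent ball of $B_n$ witnessing $x\in 0.2B_n$, then for any $y\in B(x,3^{-N}/2)$ the triangle inequality gives
\[
|y-c|<\frac{3^{-N}}{2}+\frac{0.2\,\psi(2^n)}{2^n}\le \frac{0.1\,\psi(2^n)}{2^n}+\frac{0.2\,\psi(2^n)}{2^n}<\frac{\psi(2^n)}{2^n},
\]
so $y\in B_n$. For $I=B(z,r)$ an analogous computation, using $3^{-N}/2<2^{-n}/2<r/2$, shows $B(x,3^{-N}/2)\subset I$; for $I=[0,1]$ one simply uses $K\subset[0,1]$. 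Disjointness of the balls $B(x,3^{-N}/2)$ together with additivity of $\mu$ then yields $\mu(B_n\cap I)\ge 2^{-(N+1)}|\cC_N\cap 0.2B_n\cap 0.2I|$.

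For the upper bound, the plan is to cover $B_n\cap I\cap K$ by level-$N$ components and charge each to the left endpoint of that component. Every $y\in B_n\cap I\cap K$ lies in a unique component $[x,x+3^{-N}]\subset K_N$ with $x\in\cL_N\subset\cC_N$, and hence in $B(x,3^{-N})$. The triangle inequality gives $|x-c|<3^{-N}+\psi(2^n)/2^n\le 1.2\,\psi(2^n)/2^n<5\psi(2^n)/2^n$, so $x\in 5B_n$; and $|x-z|<3^{-N}+r<2r<5r$ places $x$ in $5I$ when $I=B(z,r)$. Therefore
\[
B_n\cap I\cap K\subset\bigcup_{x\in\cC_N\cap 5B_n\cap 5I}B(x,3^{-N}),
\]
and applying $\mu(B(x,3^{-N}))\le 2^{-(N-1)}$ termwise produces $\mu(B_n\cap I)\le 2^{-(N-1)}|\cC_N\cap 5B_n\cap 5I|$.

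The argument is essentially routine; no step is a genuine obstacle. The only care required is the bookkeeping of the constants $0.2$ and $5$ relative to the hypothesis $3^{-N}\le \psi(2^n)/(5\cdot 2^n)$, and the uniform treatment of the case $I=[0,1]$ alongside $I=B(z,r)$ (for which one may interpret $0.2I=5I=[0,1]$, since no scaling of the ambient interval is actually needed).
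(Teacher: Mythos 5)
Your proof is correct and follows essentially the same route as the paper: the lower bound via disjoint balls $B(x,3^{-N}/2)$ packed inside $B_n\cap I$, and the upper bound by covering with balls $B(x,3^{-N})$ and the standard measure estimate $\mu(B(x,3^{-N}))\le 2^{-(N-1)}$. The only cosmetic difference is in the upper bound: the paper starts from a cover of all of $[0,1]$ indexed by the full grid $S_N$ of $3^{-N}$-denominator rationals and then argues that the contributions vanish off $\cC_N\cap 5B_n\cap 5I$, whereas you build the cover of $B_n\cap I\cap K$ directly by charging each point to the (left) endpoint of its $K_N$-component; these are logically equivalent, and your bookkeeping of the constants $0.2$ and $5$ under the hypothesis $3^{-N}\le\psi(2^n)/(5\cdot 2^n)$ matches the paper's.
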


\begin{proof} If $x \in \cC_N \cap 0.2B_n \cap 0.2I$ then $B(x, 3^{-N}/2) \subseteq B_n \cap I$. That $B(x,3^{-N}/2) \subseteq B_n$ is clear upon noting our choice of $N$ and recalling that $B_n$ consists of disjoint intervals of length $2\times\frac{\psi(2^n)}{2^n}$. The inclusion $B(x,3^{-N}/2) \subseteq I$ follows by the triangle inequality and our choices of $N$ and $n_0(I, \psi)$. Hence, by disjointness (since any two distinct points in $\cC_N$ are distance at least $3^{-N}$ apart), we have
\[
\mu(B_n \cap I) \ge \sum_{x \in \cC_N \cap 0.2B_n \cap 0.2I} \mu(B(x, 3^{-N}/2)) = 2^{-(N+1)} | \cC_N \cap 0.2 B_n \cap 0.2I| .
\]

For the second inequality, we cover $[0,1]$ by $\{B(x, 3^{-N}): x \in S_{N}\}$, where $S_{N}$ is the set of rationals with denominator $3^N$ in $[0,1]$. For each $x \in S_N$ we have
\[
\mu(B_n \cap I \cap B(x,  3^{-N})) \le \mu(B(x,  3^{-N})) \le2^{-(N-1)}.
\]
If $x \in S_N \setminus \cC_N$ then $B(x, 3^{-N})$ does not intersect the Cantor set, so
\[
\mu(B_n \cap I \cap B(x,  3^{-N})) \le \mu(B(x,  3^{-N})) = 0.
\]
Finally, if $x \in S_N \setminus (5B_n \cap 5I)$ then $B(x, 3^{-N})$ does not intersect $B_n \cap I$, so
\[
\mu(B_n \cap I \cap B(x,  3^{-N})) = 0.
\]
To see this, one can argue by contradiction, again employing the triangle inequality and recalling our choices of $N$ and $n_0(I, \psi)$.

Putting everything together yields
\begin{align*}
\mu(B_n \cap I) \le \sum_{x \in S_N} \mu(B_n \cap I \cap B(x,  3^{-N})) \le 2^{-(N-1)} |\cC_N \cap 5B_n \cap 5I|,
\end{align*}
as required.
\end{proof}

We also require the following inequality, which enables us to pass between Cantor levels $K_N$. Here $N$ is roughly the level at which $A_n$ lives since, up to a multiplicative constant, the component intervals in $K_N$ are the same length as the intervals comprising $A_n$.

\begin{lemma} \label{DroppingDown} Let $n, M, N \in \bN$ be such that
\begin{equation} \label{ValidDrop}
\frac{\psi(2^n)}{125 \cdot 2^n} \le 3^{-N} \le \frac{5 \psi(2^n)}{2^n} \leq 3^{-M} \leq 2^{9-n}.
\end{equation}
Then
\[
|\cC_N \cap 5A_n| \ll |\cC_M \cap 5A_n(M)|,
\]
where the implied constant is absolute.
\end{lemma}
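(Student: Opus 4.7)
The plan is to build an explicit map $\varphi : \cC_N \cap 5A_n \to \cC_M \cap 5A_n(M)$ and to show that every fiber of $\varphi$ has size bounded by an absolute constant. Summing will then immediately yield the desired inequality.

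First, I would note that \eqref{ValidDrop} forces $3^{-N} \le 3^{-M}$, hence $M \le N$, so every point of $\cC_N \subset K_N$ lies inside a unique component interval of $K_M$. Define $\varphi(x)$ to be the left endpoint of that $K_M$-interval, so that $\varphi(x) \in \cL_M \subset \cC_M$ and $|\varphi(x)-x| \le 3^{-M}$. To check that $\varphi(x) \in 5A_n(M)$ whenever $x \in \cC_N \cap 5A_n$, pick $a$ with $|x - a/2^n| \le 5\psi(2^n)/2^n$ and use the triangle inequality together with the bound $5\psi(2^n)/2^n \le 3^{-M}$ from \eqref{ValidDrop}:
\[
|\varphi(x) - a/2^n| \le 3^{-M} + 5\psi(2^n)/2^n \le 2 \cdot 3^{-M} < 5\cdot 3^{-M}.
\]

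The main step, and the only slightly delicate one, is the fiber bound. Fix $y \in \cC_M \cap 5A_n(M)$; then $\varphi^{-1}(y) \subseteq \cC_N \cap [y, y+3^{-M}] \cap 5A_n$. The rightmost inequality $3^{-M} \le 2^{9-n}$ in \eqref{ValidDrop} is what makes this manageable: the dyadic rationals $a/2^n$ are $2^{-n}$-spaced, and the set of such rationals whose ball $B(a/2^n, 5\psi(2^n)/2^n)$ meets $[y, y+3^{-M}]$ lies in an interval of length $\le 3 \cdot 3^{-M} \le 3 \cdot 2^9 \cdot 2^{-n}$, giving an absolute bound on their number. Each such ball has diameter $10\psi(2^n)/2^n \le 1250 \cdot 3^{-N}$ by the leftmost inequality of \eqref{ValidDrop}, while consecutive points of $\cC_N$ are at distance at least $3^{-N}$. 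Hence each such ball contains $O(1)$ points of $\cC_N$, and so $|\varphi^{-1}(y)| = O(1)$.

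Summing over $y$ in the image yields
\[
|\cC_N \cap 5A_n| = \sum_{y} |\varphi^{-1}(y)| \ll |\cC_M \cap 5A_n(M)|,
\]
with an absolute implied constant. The only real obstacle is careful bookkeeping of the absolute constants arising from the three inequalities in \eqref{ValidDrop}; no deeper ideas are required.
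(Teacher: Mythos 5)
Your proof is correct and is essentially the paper's argument recast in the language of a projection map: the map $\varphi$ sending $x \in \cC_N$ to the left endpoint of its $K_M$-interval is exactly the paper's ternary truncation $a \mapsto a_1$, and your two-step fiber bound ($O(1)$ candidate dyadic rationals from $3^{-M}\le 2^{9-n}$, then $O(1)$ points of $\cC_N$ per dyadic ball from $\psi(2^n)/2^n \le 125\cdot 3^{-N}$) matches the paper's counting of $b$ then $a_2$. The only cosmetic difference is that you treat $\cC_N$ directly, whereas the paper works with $\cL_N$ and invokes symmetry for $\cR_N$.
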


\begin{proof} 
Suppose $x \in \cL_N \cap 5A_n$; the corresponding bound for $x \in \cR_N$ follows by symmetry. Since $x \in \cL_N$, we have $x=\frac{a}{3^N}$ for some integer $a\in [0, 3^N)$, where the ternary expansion of $a$ consists only of 0's and 2's. As $x \in 5A_n$, there exists an integer $b \in [0, 2^n]$ such that $\left|x-\frac{b}{2^n}\right|<\frac{5 \psi(2^n)}{2^n}$. Thus, $|\cL_N \cap 5A_n|$ is bounded above by the number of integer solutions $(a,b)$ to the inequality
\[\left|\frac{a}{3^N}-\frac{b}{2^n}\right|<\frac{5\psi(2^n)}{2^n},\]
where $0 \leq b \leq 2^n$, $0 \leq a < 3^N$, and all of the ternary digits of $a$ are 0 or 2.

Let us now decompose $a$ by writing $a = a_1a_2$, where $a_1$ represents the first $M$ ternary digits of $a$ and $a_2$ represents the remaining $N-M$ digits. From this, we see that $|\cL_N \cap 5A_n|$ is bounded above by the number of integer solutions $(a_1,a_2,b)$
to
\begin{equation} \label{higher}
\Bigl| \frac { 3^{N-M} a_1 + a_2} {3^N} - \frac{b}{2^n} \Bigr| < \frac{5 \psi(2^n)}{2^n},
\end{equation}
where $0 \le a_1 < 3^M$, $0 \le a_2 < 3^{N-M}$, $0 \le b \le 2^n$, and the ternary digits of $a_1, a_2$ are all 0 or 2. Next, we note that 
\begin{equation*}
\frac{a_1}{3^M} \in 5A_n(M),
\end{equation*}
since
\begin{equation} \label{tri}
\left|\frac{a_1}{3^M}-\frac{b}{2^n}\right| \leq \left|\frac{a_1}{3^M}+\frac{a_2}{3^N}-\frac{b}{2^n}\right|+\frac{a_2}{3^N} < \frac{5\psi(2^n)}{2^n} + 3^{-M} \leq \frac{2}{3^M}.
\end{equation}

Given $a_1$, we see from \eqref{tri} that $b/2^n$ is forced to lie in an interval of length $\frac{4}{3^M}$ centred at~$\frac{a_1}{3^M}$. By hypothesis, $\frac{4}{3^M} \ll 2^{-n}$, so there are $O(1)$ possibilities for $b$. Next, suppose we are given $a_1$ and~$b$. Then, by \eqref{higher}, $a_2$ must lie in an interval of length $\frac{3^N \times 10 \times \psi(2^n)}{2^n}$ centred at $\frac{3^N b}{2^n}-\frac{3^{N}a_1}{3^M}$. Thus, there are $O(1)$ solutions $a_2$ to \eqref{higher}, by our assumption that $3^{-N} \asymp \frac{\psi(2^n)}{2^n}$. Finally, since $\frac{a_1}{3^M} \in \cL_M \cap 5A_n(M)$, we conclude that there are $O(|\cL_M \cap 5A_n(M)|)$ solutions to \eqref{higher} in total. By symmetry we have $|\cR_N \cap 5A_n| = |\cL_N \cap 5A_n|$, so
\[
|\cC_N \cap 5A_n| = 2 |\cL_N \cap 5A_n| \ll |\cC_M \cap 5 A_n(M)|. \qedhere
\] 
\end{proof}

\subsection{Proof of the basic result} \label{basic result section}

We presently provide a direct proof of Proposition \ref{basic proposition}. Recall that we assume that $\psi(2^n)<1$ for all $n \in \N$. If $n \ge 7$ is an integer, and $N \in \bN$ is such that
\[
3^{-N} \le \frac{\psi(2^n)}{5 \cdot 2^n} < 3^{-(N-1)}
\]
then, by Lemma \ref{connection} (taking $I=[0,1]$), we have
\[
\mu(A_n) \le 2^{-(N-1)} |\cC_N \cap 5A_n|.
\]
Choose $M \in \bN$ such that
\[
3^{-M} < 2^{5-n} \le  3^{-(M-1)},
\]
and note that $M \le N$. We must also have
\[
|\cC_N \cap 5A_n| \ll |\cC_M \cap 5A_n(M)|,
\]
by Lemma \ref{DroppingDown}. Therefore, since $3^{-N} \asymp \frac{\psi(2^n)}{2^n}$, $3^{-M} \asymp 2^{-n}$, and $|\cC_M \cap 5A_n(M)|$ is trivially bounded above by $|\cC_M|=2^{M+1}$, we have
\[
\mu(A_n) \ll 2^{M-N} = (3^{M-N})^{\gamma} \ll (2^n / 3^N)^\gam \ll \psi(2^n)^\gam
\]
for $n \ge 7$. Hence
\[
\sum_{n=1}^{\infty} \mu(A_n) \ll \sum_{n=1}^{\infty} \psi(2^n)^\gam
\]
converges, and the convergence Borel--Cantelli Lemma (Lemma \ref{first Borel-Cantelli}) completes the proof.

\subsection{Fourier analysis} \label{FourierAnalysis}

In this section, we use Fourier analysis to count instances of dyadic rationals being close to triadic rationals in the Cantor set. Recall that $\mu$ possesses the following two properties:
\begin{enumerate} 
\item (Positivity) If $z \in K$ and $r > 0$, then $\mu(B(z,r)) > 0$, and
\item (Doubling) If $z \in K$ and $r > 0$, then $\mu(B(z,2r)) \ll \mu(B(z,r))$.
\end{enumerate}
In fact $\mu$ is Ahlfors--David regular; that is, for $z \in K$ and $r \in (0,1]$, we have
\[
\mu(B(z,r)) \asymp r^\gam,
\]
see \cite[\S 6.1]{LSV ref} or \cite{Falconer}. This property implies the properties (1) and (2) stated above.

Before commencing in earnest, we estimate the measure of an interval in a discrete fashion. This is a simpler analogue of Lemma \ref{connection}.

\begin{lemma} \label{MeasureOfInterval} Let $I = B(z,r)$, where $z \in K$ and $0<r<1$, and let $L\ge L_0(I)$ be a large positive integer. In particular, we require $L_0(I)$ to be sufficiently large that $\frac{3^{-L_0(I)}}{2}<\frac{r}{5}$. Then
\[
\mu(I) \asymp \frac{ |\cC_L \cap I| }{2^L}.
\]
The implicit constants are absolute. Moreover, if $B = B(z,r)$, where $z \in \bR$ and $0 <r < 1$, and $L$ is a positive integer for which $3^{1-L} < r$, then
\begin{equation} \label{VersatileMeasure}
\mu(B) \le 2^{-(L-1)} |\cC_L \cap 5B|.
\end{equation}
\end{lemma}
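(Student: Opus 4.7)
The plan is to establish the three inequalities $\mu(I) \gg |\cC_L \cap I|/2^L$ and $\mu(I) \ll |\cC_L \cap I|/2^L$ (which together give the $\asymp$ in the first assertion), plus the second display \eqref{VersatileMeasure}. Each is a short covering or packing argument exploiting the self-similar structure of $K$, using that distinct points of $\cC_L$ are at least $3^{-L}$ apart, that $\mu(B(x, 3^{-L}/2)) = 2^{-(L+1)}$ for $x \in \cC_L$, that $\mu(B(x, 3^{-L})) \le 2^{-(L-1)}$ for any $x \in \bR$, and that $\mu$ is doubling.

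For \eqref{VersatileMeasure}, I would reproduce the second half of the proof of Lemma \ref{connection}. Cover $[0,1]$ by $\{B(x, 3^{-L}) : x \in S_L\}$, where $S_L$ is the set of rationals with denominator $3^L$ in $[0,1]$. If $x \in S_L \setminus \cC_L$, then $B(x, 3^{-L}) \cap K = \emptyset$ and the term vanishes; if $x \in S_L \setminus 5B$, then $|x - z| > 5r$, whereas any point of $B(x, 3^{-L}) \cap B$ would force $|x - z| \le r + 3^{-L} < r + r/3 < 5r$ (using $3^{1-L} < r$), so $B(x, 3^{-L}) \cap B = \emptyset$. Summing $\mu(B \cap B(x, 3^{-L})) \le 2^{-(L-1)}$ over the surviving $x \in \cC_L \cap 5B$ gives \eqref{VersatileMeasure}.

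For the lower bound $\mu(I) \gg |\cC_L \cap I|/2^L$ in the first assertion, I would use a packing argument. The balls $\{B(x, 3^{-L}/2) : x \in \cC_L \cap I\}$ are pairwise disjoint, each has $\mu$-measure exactly $2^{-(L+1)}$, and for $x \in I$ they all lie inside $1.2 I = B(z, 6r/5)$ since $3^{-L}/2 \le 3^{-L_0(I)}/2 < r/5$. Hence
\[
|\cC_L \cap I| \cdot 2^{-(L+1)} \le \mu(1.2 I) \ll \mu(I),
\]
where the last step is doubling.

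The matching upper bound $\mu(I) \ll |\cC_L \cap I|/2^L$ is the subtlest step, since a naive application of \eqref{VersatileMeasure} only produces $|\cC_L \cap 5I|$ and transferring $5I \to I$ would be circular. Here I would exploit $z \in K$: choose $L'$ minimal with $3^{-L'} \le r$, so $r/3 < 3^{-L'} \le r$, and one verifies $L' \le L$ from $3^{-L} < 2r/5$. Let $J'$ be the unique interval of $K_{L'}$ containing $z$; then $J' \subseteq B(z, 3^{-L'}) \subseteq I$, and by self-similarity $|\cC_L \cap J'| = 2^{L-L'+1}$, so $|\cC_L \cap I|/2^L \ge 2 \cdot 2^{-L'}$. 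Conversely $I$ meets only $O(1)$ intervals of $K_{L'}$ (as $r < 3 \cdot 3^{-L'}$), so $\mu(I) \ll 2^{-L'}$, and the two bounds combine. The main obstacle is ensuring the hypothesis $L \ge L_0(I)$ is strong enough to do double duty --- forcing the packed balls into $1.2 I$ in the lower bound and guaranteeing $L' \le L$ in the upper bound --- while keeping all implicit constants absolute.
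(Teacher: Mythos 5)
Your argument is correct, and for the upper bound $\mu(I) \ll |\cC_L \cap I|/2^L$ you take a genuinely different route from the paper. The paper proves the intermediate inequality $2^{-(L+1)}|\cC_L \cap 0.2I| \le \mu(I) \le 2^{-(L-1)}|\cC_L \cap 5I|$ and then removes the dilation factors by a formal reduction: since the intermediate inequality holds for all intervals, it can be applied to $J = 0.2I$ and $J' = 5I$, and the doubling property (iterated four times to pass from $1.6I$ down to $0.2I$) transports the result back to $I$ itself. Your upper bound instead works directly at the scale $L'$ at which $I$ ``lives'' in the Cantor construction: choosing $L'$ minimal with $3^{-L'} \le r$, you show $I$ swallows a full level-$L'$ interval $J'$ of $K_{L'}$ (giving $|\cC_L \cap I| \ge 2^{L - L' + 1}$ by self-similarity) while meeting only $O(1)$ level-$L'$ intervals (giving $\mu(I) \ll 2^{-L'}$). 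This is arguably more transparent geometrically and avoids the need to rescale an arbitrary auxiliary interval, at the modest cost of introducing the extra parameter $L'$ and a separate verification that $L' \le L$. Your lower bound (packing $B(x, 3^{-L}/2)$ for $x \in \cC_L \cap I$ into $1.2I$, then doubling) is the mirror image of the paper's (packing balls centred in $0.2I$ into $I$), and your proof of \eqref{VersatileMeasure} reproduces the paper's covering argument essentially verbatim. One small point worth tidying: the inclusion $J' \subseteq B(z, 3^{-L'}) \subseteq I$ uses $3^{-L'} \le r$ with an open ball on the right, so in the borderline case $3^{-L'} = r$ you should either replace $I$ by its closure (harmless since the boundary has $\mu$-measure zero) or pass to $L'+1$; this does not affect the implicit constants.
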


\begin{proof} By the doubling property of $\mu$, it suffices to show that
\begin{align} \label{doubling}
2^{-(L+1)} |\cC_L \cap 0.2 I| &\le \mu(I) \le 2^{-(L-1)} |\cC_L \cap 5I|.
\end{align}
To see that this is indeed sufficient, suppose first that we wish to show that $2^L \mu(I) \ll |\cC_{L} \cap I|$. This would follow from $2^L \mu(0.2I) \leq 2|\cC_{L} \cap I|$, since doubling gives 
\[\mu(I) \le \mu(1.6I) \ll \mu(0.8I) \ll \mu(0.4I) \ll \mu(0.2I).\]
That is, it would suffice to show that $2^L \mu(J) \leq 2|\cC_{L} \cap 5J|$ for $J=0.2I$. By a similar argument, the inequality $2^{L}\mu(I) \gg |\cC_{L} \cap I|$ would follow if we could show that \mbox{$2^{L+1}\mu(J') \geq |\cC_{L} \cap 0.2J'|$} for $J'=5I$, since $\mu(5I) \ll \mu(I)$ by the doubling property. Since $I$, and therefore also $J$ and $J'$, are arbitrary intervals, it suffices to show~\eqref{doubling}.

If $x \in \cC_L \cap 0.2I$ then $B(x, 3^{-L}/2) \subseteq I$, by the triangle inequality. Hence, by disjointness (since distinct points in $\cC_L$ are distance at least $3^{-L}$ apart), we have
\[
\mu(I) \ge \sum_{x \in \cC_L \cap 0.2I} \mu \left(  B\left(x, \frac{3^{-L}}{2}\right)  \right)  = 2^{-(L+1)} |\cC_L \cap 0.2I|.
\]

For the upper bound, we cover $[0,1]$ by $\{B(x,3^{-L}): x \in S_{L}\}$, where $S_{L}$ is the set of rationals with denominator $3^L$ in $[0,1]$. For each $x \in S_L$ we have
\[
\mu(I \cap B(x,3^{-L})) \le \mu(B(x,3^{-L})) \le 2^{-(L-1)}.
\]
If $x \in S_L \setminus \cC_L$ then $B(x,3^{-L})$ does not intersect the Cantor set, so
\[
\mu(I \cap B(x,3^{-L})) = 0.
\]
Finally, if $x \in S_L \setminus 5I$ then $B(x,3^{-L})$ does not intersect $I$, so
\[
\mu(I \cap B(x,3^{-L})) = 0.
\]
Therefore
\[
\mu(I) \le \sum_{x \in S_L} \mu(I \cap B(x,3^{-L})) \le 2^{-(L-1)} |\cC_L \cap 5I|,
\]
as required.

For the final statement, witness that in proving \eqref{doubling} we did not use the assumption that $z \in K$.
\end{proof}

\subsubsection{Schwartz functions, bump functions, and tempered distributions}

We briefly discuss some theory in preparation for the Fourier analysis. Full details can be found in the distribution theory textbooks of Mitrea \cite{Mit2018}, and Friedlander and Joshi \cite{FJ1998}. We restrict ourselves to the one-dimensional setting. 

Throughout this paper, we write $e(x) = e^{2 \pi i x}$ for $x \in \bR$. A \emph{Schwartz function} is a function $f: \bR \to \bC$ such that if $a,b \in \Z$ are such that $a,b \geq 0$, then
\[
\sup_{x \in \bR} |x^b f^{(a)}(x)| < \infty,
\]
where $f^{(a)}$ is the $a^{\text{th}}$ derivative of $f$. \emph{Schwartz space}, denoted $\cS(\bR)$, is the complex vector space of Schwartz functions, together with a natural topology \cite[\S 14.1]{Mit2018}. The \emph{Fourier transform} of a Schwartz function $f$ is
\[
\hat f: \bR \to \bC, \qquad \hat f (t) = \int_\bR f(\alp) e(-t \alp) \d \alp,
\]
and this defines an automorphism of Schwartz space \cite[Theorem 3.25]{Mit2018}, with the inverse operator sending $g \in \cS(\bR)$ to
\[
t \mapsto \int_\bR g(\alp) e(t\alp) \d \alp.
\]
Note that the normalisation in \cite{Mit2018} is slightly different. 

A \emph{bump function} is a $C^\infty$, compactly supported function $\phi:\bR \to [0,\infty)$. In the sequel, we fix a smooth bump function $\phi$ supported on $[-2,2]$ such that $\phi(x)=1$ for $x\in [-1,1]$ and $0 \le \phi(x) \le 1$ for all $x \in \bR$. Such a function exists, by \cite[Theorem 1.4.1]{FJ1998}. As explained in \cite[\S3.1]{Mit2018}, bump functions are Schwartz, and have the rapid Fourier decay property that for each number $N>0,$
\begin{equation} \label{RapidDecay}
\hat \phi(t) \ll_N (1 + |t|)^{-N},
\end{equation}
where the subscript $N$ denotes that the implicit constant depends on $N$. We note that the bound in \eqref{RapidDecay} may be deduced from \cite[Eq. (3.1.12)]{Mit2018}.

A \emph{tempered distribution} is a continuous linear functional $\cS(\bR) \to \bC$. The archetypal example of a tempered distribution is the Dirac delta function, which sends $g \in \cS(\bR)$ to $g(0)$, but is not a function in the classical sense. Note that a Schwartz function $f$ induces a tempered distribution $T_f$ sending $g \in \cS(\bR)$ to
\[
\int_\bR f(\alp) g(\alp) \d \alp.
\]
We define \emph{translates} of Schwartz functions $f$ and of tempered distributions $u$ by
\[
\tau_a f (x) = f(x - a) \qquad (a,x \in \bR)
\]
and
\[
\tau_a u (g) = u(\tau_{-a}g) \qquad (a \in \bR, \: g \in \cS(\bR)).
\]
For example, denoting by $\del$ the Dirac delta function, the distribution $\tau_a \del$ sends $g \in \cS(\bR)$ to $g(a)$.

The \emph{Fourier transform} of a tempered distribution $u: \cS(\bR) \to \bC$ is the tempered distribution
\[
\hat u: \cS(\bR) \to \bC, \qquad \hat u (f) = u(\hat f).
\]
Writing $g$ as the Fourier transform of $f \in \cS(\bR)$, we obtain the \emph{Parseval formula}
\begin{align} \label{Parseval}
u(g) &= \hat u (g^\vee),
\end{align}
where $g^\vee$ is the inverse Fourier transform of $g$; that is,
$g^\vee(t) = \hat{g}(-t)$.

The Fourier transform of a translate $\tau_a u$ of a tempered distribution is 
\[
g \mapsto \hat u (ge(-a \cdot)).
\]
Indeed, we compute that if $u$ is a tempered distribution, $a \in \bR$, and $g \in \cS(\bR)$, then
\[
\widehat{\tau_a u}(g) = \tau_a u (\hat g) = u(\tau_{-a} \hat g) = u(x \mapsto \hat g (x+a)) = u(\widehat{g e(-a \cdot)})
= \hat u(g e(-a \cdot)).
\]
To see the fourth equality, observe that the Fourier transform of $t \mapsto g(t) e(-at)$ sends $x$ to
\[
\int_\bR g(t) e(-at) e(-tx) \d t = \int_\bR g(t) e( -t (x+a)) \d t = \hat g(x+a).
\]

A continuous, bounded function $F$ is the \emph{function type} of a distribution $u$ if 
\[
u(f) = \int_\bR f(t) F(t) \d t \qquad (f \in \cS(\bR)).
\]
The \emph{distributional Poisson summation formula} \cite[Theorem 8.5.1]{FJ1998} asserts that if $g \in \cS(\R)$ then
\[
\sum_{n \in \bZ} \tau_n \del(g) =  \sum_{n \in \bZ}   \int_\bR g(t) e(nt) \d t.
\]
This admits the following generalisation.

\begin{lemma} \label{periodicity} Let $u$ be a tempered distribution whose Fourier transform has function type $F$. Then 
\[
\sum_{n \in \bZ} \widehat{\tau_n u} =\sum_{n \in \bZ} \tau_n \del (F \cdot).
\]
\end{lemma}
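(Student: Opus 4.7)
My plan is to derive the identity from the translation–modulation duality for the Fourier transform together with the distributional Poisson summation formula already recalled in the paper. The first step is to establish the single-$n$ identity
\[
\widehat{\tau_n u} = e(-n\, \cdot)\, \hat u \qquad (n \in \bZ)
\]
as tempered distributions. Testing both sides against $g \in \cS(\bR)$ and unwinding the definitions gives $\widehat{\tau_n u}(g) = (\tau_n u)(\hat g) = u(\tau_{-n} \hat g)$ and $(e(-n\, \cdot)\,\hat u)(g) = \hat u(g\, e(-n\, \cdot)) = u(\widehat{g\, e(-n\, \cdot)})$. These agree because the Fourier transform converts modulation into translation: a direct computation (already made in the paper) yields $\widehat{g\, e(-n\, \cdot)}(t) = \hat g(t+n) = (\tau_{-n} \hat g)(t)$. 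Since $\hat u$ has function type $F$, it follows that $\widehat{\tau_n u}$ has function type $t \mapsto F(t)\, e(-nt)$.

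Next, summing over $n \in \bZ$ and formally pulling the series through the multiplication, the left-hand side becomes $\hat u \cdot \sum_{n \in \bZ} e(-n\, \cdot)$. The distributional Poisson summation formula stated earlier in the paper gives $\sum_{n \in \bZ} e(-n\, \cdot) = \sum_{n \in \bZ} \tau_n \del$ in $\cS'(\bR)$. Multiplication of this Dirac comb by the continuous bounded function $F$ collapses the product to $\sum_{n \in \bZ} F(n)\, \tau_n \del$, which is exactly $\sum_{n \in \bZ} \tau_n \del(F \cdot)$, yielding the claimed identity.

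The main technical point will be to rigorously justify that $\sum_{n \in \bZ} \widehat{\tau_n u}$ converges in $\cS'(\bR)$ to the product computed above, since $\sum_n e(-n\, \cdot)$ only converges distributionally. Testing against a fixed $g \in \cS(\bR)$, this reduces to the Poisson summation identity
\[
\sum_{n \in \bZ} \widehat{Fg}(n) = \sum_{n \in \bZ} (Fg)(n)
\]
for $h := Fg$, which is continuous and rapidly decaying but typically only as smooth as $F$. I would handle this by periodizing: the continuous $1$-periodic function $H(x) = \sum_{k \in \bZ} h(x+k)$ has $n$th Fourier coefficient $\hat h(n)$, and summing its Fourier series at $x = 0$ (in an appropriate Ces\`aro/Fej\'er sense, which is valid for continuous $H$) gives the desired identity. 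Alternatively, one can mollify $F$ to a smooth bounded $F_\varepsilon$ so that $F_\varepsilon g \in \cS(\bR)$, apply classical Poisson summation, and pass to the limit $\varepsilon \to 0$ via dominated convergence to recover the distributional equality.
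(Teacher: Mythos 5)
Your argument follows the same route as the paper's own proof: after establishing the modulation identity $\widehat{\tau_n u}(g)=\hat u(g\,e(-n\cdot))=\int_\bR F(t)g(t)e(-nt)\,\d t$, both you and the authors finish by invoking the Poisson summation formula for the function $Fg$. The paper's proof is a one-line chain of equalities ending with the remark that the ``Schwartz function $Fg$'' satisfies the distributional Poisson formula. You have correctly noticed that this step is not literally justified by the stated hypotheses: the lemma only assumes $F$ is continuous and bounded, so $Fg$ need not be Schwartz, and the version of Poisson summation recalled earlier in the paper is stated for Schwartz test functions. In the paper's actual application (Lemma \ref{FOURIERCOUNT}) the function $F$ is a finite trigonometric polynomial, hence smooth with all derivatives polynomially bounded, so $Fg$ is Schwartz and the gap is harmless there; nevertheless, as a proof of the lemma \emph{as stated} your observation is a genuine improvement. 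Two small caveats on your proposed fixes: the Fej\'er/Ces\`aro argument recovers $\sum_n h(n)$ as a Ces\`aro limit of $\sum_n \hat h(n)$, which is strictly weaker than the plain convergence asserted by the lemma unless you separately show that $\sum_n \hat h(n)$ converges (for merely continuous $F$ the Fourier coefficients $\hat h(n)=\widehat{Fg}(n)$ need not be summable); and the mollification route needs a uniform bound to pass the limit through the infinite sum on the left-hand side. Either can be made to work with a little more care, but the cleanest repair is simply to strengthen the hypothesis of the lemma to ``$F$ smooth with polynomially bounded derivatives,'' which is what the application uses anyway.
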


\begin{proof} If $g \in \cS(\bR)$ then 
\begin{align*}
\sum_{n \in \bZ} \widehat{\tau_n u}(g) 
= \sum_{n \in \bZ} \widehat{\tau_{-n} u}(g)
= \sum_{n \in \bZ} \hat u (ge(n \cdot))  =
\sum_{n \in \bZ} \int_\bR F(t) g(t)  e(nt) \d t 
 = \sum_{n \in \bZ} \tau_n \del (Fg),
\end{align*}
where for the final inequality we have applied the distributional Poisson summation formula to the Schwartz function $Fg$.
\end{proof}

\subsubsection{Counting using Fourier analysis}

We now proceed with the Fourier analysis. 

\begin{lemma}\label{FOURIERCOUNT} Let $y \in \R$, and let $n,k  \in \bN$. Let $I$ be either $[0,1]$ or a subinterval of $[0,1]$ whose endpoints do not lie in $K$. Let
\[
f(\alp) = f_n(\alp; y) = \sum_{b = 0}^{2^n - 1} \phi_r ( \alp + y - b/2^n), \qquad \phi_r(\bet) = \phi( r \bet), \qquad r = 2^{n+k}.
\]
If $I=[0,1]$ then let $L_0(I) = 0$. Otherwise, let $L_0(I)$ be a positive integer such that $3^{-L_0(I)}$ is less than the distance from the boundary of $I$ to the boundary of $K$. Finally, let $M \ge L \ge L_0(I)$ be integers. Then, for $T,N \in \bN$ such that $N > 1$, we have
\begin{align*}
&\sum_{x \in \cL_M \cap I \mmod 1} f(x) \\
&= 2^{M-L-k} \sum_{|m|\leq 2^k T} \hat{\phi}(2^{-k}m)e(2^n m y)\sum_{x \in \cL_L \cap I} e(2^n m x) \prod_{j = L+1}^M \frac{1 + e(2^{n+1}m/3^j)}2\\ &+ O_N\left(2^{M-L}\frac{|\cL_L\cap I|}{T^N}\right),
\end{align*}
where the subscript $N$ indicates that the implied constant depends on $N.$ Here $x \in \cL_M \cap I \mmod 1$ means that $x + m \in \cL_M \cap I$ for some $m \in \bZ$.
\end{lemma}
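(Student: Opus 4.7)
The plan is to expand $f$ in a Fourier series via the distributional framework of Section \ref{FourierAnalysis}, and then to exploit the triadic product structure of $\cL_M$ to evaluate the resulting character sum. To begin, replace $f$ by its natural $1$-periodic extension
\[
\tilde f(\alpha) = \sum_{b \in \bZ}\phi_r(\alpha + y - b/2^n).
\]
This is the content of the $\mmod{1}$ convention: each equivalence class in $\cL_M \cap I$ has a representative at which $f$ and $\tilde f$ coincide, by the narrow support (width $\ll 2^{-n-k}$) of $\phi_r$. Observe that $\tilde f$ is in fact $1/2^n$-periodic. Changing variables to $\beta = 2^n \alpha$, this periodization becomes the $1$-periodization of $u := \tau_{-2^n y}\phi_{2^k}$, whose Fourier transform has function type $F(t) = 2^{-k} e(2^n y t)\hat\phi(t/2^k)$. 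Lemma \ref{periodicity} applied to $u$, followed by Fourier inversion (valid since $F$ decays rapidly by \eqref{RapidDecay}), then yields
\[
\tilde f(\alpha) = 2^{-k}\sum_{m \in \bZ}\hat\phi(m/2^k)\, e(2^n m y)\, e(2^n m \alpha).
\]

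Substituting this series into the sum and interchanging orders reduces the problem to evaluating the character sum $\sum_{x \in \cL_M \cap I} e(2^n m x)$ for each $m$. For this, exploit the direct-sum decomposition $\cL_M = \cL_L \oplus S_{L+1,M}$, where
\[
S_{L+1,M} = \biggl\{ \sum_{j=L+1}^M a_j/3^j : a_j \in \{0,2\} \biggr\},
\]
which holds because the ternary expansion of every element of $\cL_M$ splits uniquely into the first $L$ and the final $M-L$ digits. The hypothesis $L \geq L_0(I)$ is precisely what ensures that, for any $x_1 \in \cL_L$ and $s \in S_{L+1,M}$, one has $x_1 + s \in I$ if and only if $x_1 \in I$: indeed $|s| < 3^{-L} \leq 3^{-L_0(I)}$, which is smaller than the distance from $\partial I$ to $\partial K$. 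Factoring $\sum_{s \in S_{L+1,M}} e(2^n m s) = \prod_{j=L+1}^M (1 + e(2^{n+1}m/3^j))$ then produces the main term of the statement, with the prefactor $2^{M-L}$ emerging from the normalisation $(1+\cdot)/2$ of each factor.

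Finally, truncate the frequency sum at $|m| \leq 2^k T$. Using \eqref{RapidDecay} for $\hat\phi(m/2^k)$, bounding the character sum trivially by $|\cL_M \cap I| = 2^{M-L}|\cL_L \cap I|$ (again by the decomposition above), and approximating the tail by an integral, the truncation error is at most
\[
2^{-k}\cdot 2^{M-L}|\cL_L \cap I| \sum_{|m| > 2^k T} (1+|m|/2^k)^{-N} \ll_N 2^{M-L}|\cL_L \cap I|/T^N,
\]
after enlarging $N$ by one to absorb the factor of $T$. The principal technical hurdle is the careful distributional justification of the Fourier expansion of $\tilde f$, which is precisely what Lemma \ref{periodicity} was crafted to supply; the remaining bookkeeping -- reconciling the $\mmod{1}$ convention with the periodization, the product factorisation of the character sum, and the tail estimate -- is routine.
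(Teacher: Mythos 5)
Your proof is correct, and it takes the dual route to the one in the paper. The paper applies the generalized Poisson summation (Lemma~\ref{periodicity}) and Parseval's identity \eqref{Parseval} to the Dirac comb $u = \sum_{x \in \cL_M \cap I}\delta_x$, pairing its Fourier transform (whose function type is the character sum $F(t) = \sum_x e(-tx)$) against $\hat f$; you instead apply Poisson summation to the bump itself, rewriting the periodization $\tilde f$ as a genuine Fourier series $\tilde f(\alpha) = 2^{-k}\sum_m \hat\phi(m/2^k)\,e(2^n m y)\,e(2^n m\alpha)$ and then summing this series directly over the finite set $\cL_M \cap I$. The two arguments are transposes of one another and rest on exactly the same three ingredients: the Fourier expansion of the scaled bump, the digit decomposition $\cL_M = \cL_L \oplus S_{L+1,M}$ (with $|s| < 3^{-L} \le 3^{-L_0(I)}$ forcing $x_1 + s \in I \Leftrightarrow x_1 \in I$), and the rapid decay \eqref{RapidDecay} for the tail. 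Your packaging is arguably more elementary, since after the change of variable the distributional machinery reduces to the classical Poisson summation formula for a Schwartz function, whereas the paper needs the function-type hypothesis in Lemma~\ref{periodicity} to handle the Dirac comb directly. Two minor remarks: when you speak of $f$ and $\tilde f$ ``coinciding at a representative,'' what is really going on is that $\sum_{w\in\bZ} f(x_0+w) = \tilde f(x_0)$ for each $x_0 \in \cL_M\cap I$, which is the precise reason the $\mmod 1$ sum becomes $\sum_{x_0\in\cL_M\cap I}\tilde f(x_0)$; and the tail estimate gives $T^{1-N}$ before relabelling $N$, so the ``enlarge $N$ by one'' step is not optional but necessary -- you noted this correctly.
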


\begin{proof} Consider the distribution
\[
\displaystyle u=\sum_{x \in \cL_M \cap I} \del_x,
\]
where $\del_x$ denotes the Dirac delta function at $x$, namely $\delta_x=\tau_{x}\delta$. This is the distribution that sends $g \in \cS(\bR)$ to $\sum_{x \in \cL_M \cap I} g(x)$. By the Parseval formula \eqref{Parseval} and Lemma \ref{periodicity}, we have 
\begin{equation} \label{ApplyParseval}
\sum_{x \in \cL_M \cap I \mmod 1} f(x) = \sum_{w \in \bZ} \tau_w u(f) = \sum_{w \in \bZ} \widehat {\tau_w u}(f^\vee)=\sum_{w\in\mathbb{Z}}\tau_w\delta(Ff^\vee),
\end{equation}
where $F$ is the function type of $\hat{u}$ and $f^\vee$ is the inverse Fourier transform of $f$; more explicitly, we recall that
\[
f^\vee(t)=\hat{f}(-t).
\] 
Via the change of variables $\beta=\alpha+y-\frac{b}{2^n}$ and an application of Fubini's Theorem, we compute that
\begin{align*}
\hat f(t) &= \int_\bR f(\alp) e(- t \alp) \d \alp \\[2ex]
            &= e(ty) \sum_{b  = 0}^{2^n-1} e(-t b/2^n)  \int_\bR \phi_r(\bet) e(-t \bet) \d \bet.
\end{align*} 
Observe that if $t \in \bZ$ then, employing the change of variables $\alpha' = r \beta$, we have 
\begin{equation} \label{IntegerFrequencies}
\hat f(t) = e(ty) 2^n 1_{2^n|t} r^{-1} \hat \phi(t/r) = 2^{-k} e(ty) \hat \phi(t/r) 1_{2^n|t}.
\end{equation}

The function type of $\hat{\delta_x}$ is $e(-x\cdot)$, since
\[
\hat{\del_x}(g) = \del_x(\hat g) = \hat g(x) 
= \int_\bR g(t) e(-tx) \d t
\]
for $g \in \cS(\bR)$. Thus, by the linearity of the Fourier transform, the function type $F$ of $\hat u$ is
\[F(t) 
       = \sum_{x \in \cL_M \cap I} e(-tx)\\[2ex]
       = \sum_{\substack{\eps_1, \ldots, \eps_M \in \{0,2\} \\  \sum_{j \le M} \eps_j/3^j \in I}} \prod_{j \le M} e(-t \eps_j/3^j).
\]
As $L \ge L_0(I)$, we see that if $\eps_1, \eps_2,\ldots \in \{0,2\}$ then
\[
\sum_{j =1}^\infty \frac{\eps_j}{3^j} \in I \quad \Longleftrightarrow \quad \sum_{j \le L} \frac{\eps_j}{3^j} \in I.
\]
In particular $\sum_{j \le M} \eps_j/3^j \in I$ if and only if $\sum_{j \le L} \eps_j / 3^j \in I$.
Therefore
\begin{align*}
F(t) &= \sum_{\substack{\eps_1, \ldots, \eps_M \in \{0,2\}: \\  \sum_{j \le M} \eps_j/3^j \in I}} e\left(-t \sum_{j=1}^{M}{\eps_j/3^j}\right) \\
  &= \sum_{\substack{\eps_1, \ldots, \eps_L \in \{0,2\}: \\  \sum_{j \le L} \eps_j/3^j \in I}} \quad \sum_{\eps_{L+1},\ldots,\eps_M \in \{0,2\}} {e\left(-t\sum_{j=1}^{L}{\frac{\varepsilon_j}{3^j}}\right) e\left(-t\sum_{j=L+1}^{M}{\frac{\varepsilon_j}{3^j}}\right)}.\end{align*}
Since the elements of $\cL_L \cap I$ are precisely the $\sum_{j \le L} \eps_j/3^j \in I$ with $\eps_j \in \{0,2\}$ for all $j$, note that
\[
\sum_{\substack{\eps_1, \ldots, \eps_L \in \{0,2\}: \\  \sum_{j \le L} \eps_j/3^j \in I}}  e\left(-t\sum_{j\le L}{\frac{\varepsilon_j}{3^j}}\right) = \sum_{x \in \cL_L \cap I} e(-tx).
\]
Next, we observe that
\begin{align*}
\sum_{\eps_{L+1},\ldots,\eps_M \in \{0,2\}} e\left(-t\sum_{j=L+1}^{M} \frac{\varepsilon_j}{3^j} \right) &= \sum_{\eps_{L+1},\ldots,\eps_M \in \{0,2\}} 
\prod_{j=L+1}^M e(-t \eps_j/3^j) \\ 
                                                   &= \prod_{j=L+1}^M \sum_{\eps_j \in \{0,2\}} 
e(-t \eps_j/3^j) 
                                                   = \prod_{j = L+1}^M (1 + e(-2t/3^j)) \\
                                                   &=2^{M-L} \prod_{j = L+1}^M \frac{1 + e(-2t/3^j)}2.
\end{align*}
Hence, we obtain
\[
F(t) = 2^{M-L} \sum_{x \in \cL_L \cap I} e(-tx) \prod_{j = L+1}^M \frac{1 + e(-2t/3^j)}2.
\]

By \eqref{ApplyParseval} and \eqref{IntegerFrequencies}, we now have
\begin{align*}
\sum_{x \in \cL_M \cap I \mmod 1} f(x) &= \sum_{w \in \Z}{\tau_{w}\delta(Ff^{\vee})} =\sum_{w \in \Z}{F(w)\hat{f}(-w)}  \\
&=2^{M-L-k} 
\sum_{w \in \bZ} e(-wy)\sum_{x \in \cL_L \cap I} e(- wx) \prod_{j=L+1}^M \frac{1+ e(-2w/3^j)}2  \hat \phi(-w/r) 1_{2^n|w}.
\end{align*}
Substituting $w = -2^n m$ and recalling that $r = 2^{n+k}$, we obtain 
\[
\sum_{x \in \cL_M \cap I \mmod 1} f(x) = 2^{M-L-k} \sum_{m\in\mathbb{Z}} \hat{\phi}(m/2^k) e(2^n m y) \sum_{x \in \cL_L \cap I} e(2^n m x) \prod_{j = L+1}^M \frac{1 + e(2^{n+1}m/3^j)}2.
\]
Let us now estimate the part of the above sum involving $m$ such that $|m|>2^k T.$ First, we see that 
\[
\left| \sum_{x \in \cL_L \cap I} e(2^n m x) \prod_{j = L+1}^M \frac{1 + e(2^{n+1}m/3^j)}2\right|\leq |\cL_L\cap I|.
\]
Next, we use the fact that $\phi$ is Schwartz, and thus has the rapid Fourier decay property \eqref{RapidDecay}, to deduce that for each number $N>1$,
\begin{align*}
\sum_{|m|>2^k T} |\hat{\phi}(m/2^k)| &\ll_N \sum_{m > 2^kT} (m/2^k)^{-N}  = 2^{kN} \sum_{m > 2^kT} m^{-N} \\
                                     &\le 2^{kN} \int_{2^kT}^\infty m^{-N} \d m = 2^{kN} \frac{(2^kT)^{1-N}}{N-1} \leq \frac{2^k}{T^{N-1}},
\end{align*}
where the subscript $N$ indicates that the implied constant depends on $N$. From here the proof concludes.
\end{proof}

To control the product term in Lemma \ref{FOURIERCOUNT}, we use digit changes in base 3. Set 
\[
\rho = | 1 + e(1/9) | < 0.94
\]
throughout.

\begin{lemma} \label{DIGITCHANGE} Let $n, L, M \in \bN$ with $L \le M$, and let $m \in \bZ \setminus \{ 0 \}$. Then
\[
\Biggl| \prod_{j = L+1}^M \frac{1+e(2^{n+1}m/3^j)}{2} \Biggr| \leq \rho^{D_3(2^{n+1}|m|; L, M)},
\]
where $D_3(2^{n+1}|m|; L, M)$ is the number of digit changes in the ternary expansion of $2^{n+1}|m|$ between the $L$'th and $M$'th digits, counting from the left. Note that when $L=M$, both sides in the above inequality are equal to $1$.
\end{lemma}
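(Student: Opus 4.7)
The plan is to reduce the complex exponential factor to a cosine and then analyse the ternary expansion of $N := 2^{n+1}|m|$ digit by digit. First, using the identity $1 + e(\theta) = 2 e^{\pi i \theta} \cos(\pi \theta)$, each factor in the product satisfies
\[
\left| \frac{1 + e(2^{n+1}m/3^j)}{2} \right| = |\cos(\pi \cdot 2^{n+1}m/3^j)|.
\]
Since $\theta \mapsto |\cos(\pi \theta)|$ is even and $1$-periodic, this equals $|\cos(\pi a_j)|$, where $a_j := \{ N/3^j \}$ denotes the fractional part. Writing $N = \sum_{i \ge 0} c_i 3^i$ in base $3$ with $c_i \in \{0,1,2\}$, one has $a_j = \sum_{i=0}^{j-1} c_i \, 3^{i-j}$, so in base $3$ the fractional expansion reads $a_j = 0.c_{j-1} c_{j-2} \ldots c_0$; in particular, for $j \ge 2$ the two leading ternary digits of $a_j$ are $c_{j-1}$ and $c_{j-2}$.

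Second, I would establish the key pointwise bound: for $j \ge 2$, if the adjacent ternary digits $c_{j-1}$ and $c_{j-2}$ of $N$ differ, then $|\cos(\pi a_j)| \le \rho$, where $\rho = |\cos(\pi/9)| = |1 + e(1/9)|/2$. Since $|\cos(\pi \theta)|$ is decreasing on $[0, 1/2]$ and symmetric about $1/2$, the inequality $|\cos(\pi a_j)| \le |\cos(\pi/9)|$ holds precisely when $a_j \in [1/9, 8/9]$. In base $3$, $1/9 = 0.01$ and $8/9 = 0.22$, so $a_j < 1/9$ forces $c_{j-1} = c_{j-2} = 0$, while $a_j > 8/9$ forces $c_{j-1} = c_{j-2} = 2$. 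Hence $c_{j-1} \neq c_{j-2}$ implies $a_j \in [1/9, 8/9]$ and the desired factor bound. For indices $j$ with no such digit change, I use the trivial bound $|\cos(\pi a_j)| \le 1$.

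Third, multiplying these factor bounds over $j = L+1, \ldots, M$, each $j$ for which $c_{j-1} \neq c_{j-2}$ contributes at most $\rho$, while the remaining indices contribute at most $1$. The $M - L$ consecutive pairs $(c_{j-1}, c_{j-2})$, for $j \in \{L+1, \ldots, M\}$, are exactly the consecutive ternary digit pairs within the digit range counted by $D_3(N; L, M)$ (the choice of left-versus-right indexing affects only labelling, not the count), which yields
\[
\prod_{j=L+1}^M |\cos(\pi a_j)| \le \rho^{D_3(N; L, M)}.
\]
The boundary case $L = M$ gives an empty product equal to $1 = \rho^0$, matching the right-hand side trivially.

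The argument is essentially routine once the cosine reformulation is in place. There is no real obstacle; the only substantive observation is the elementary identification in Step~2 that the ``bad'' intervals $[0, 1/9)$ and $(8/9, 1)$ for $a_j$ correspond precisely to the leading ternary digit patterns $00$ and $22$, and the rest is a direct multiplication of pointwise bounds.
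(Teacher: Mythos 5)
Your proof is correct and follows the same route as the paper's: both reduce each factor to the observation that a ternary digit change forces the fractional part of $2^{n+1}|m|/3^j$ into $[1/9,8/9]$, which bounds the modulus by $\rho$. You spell out the cosine identity $\left|\tfrac{1+e(\theta)}{2}\right|=|\cos(\pi\theta)|$ and the digit-pattern analysis ($00$ and $22$ being the only patterns that escape the interval) more explicitly than the paper does, but the content is identical; note also that the paper's displayed definition $\rho=|1+e(1/9)|$ is missing a factor of $\tfrac12$ (the numerical bound $<0.94$ makes the intended $\rho=|\cos(\pi/9)|$ clear), which your reformulation correctly incorporates.
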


\begin{proof} We may assume, without loss of generality, that $m \in \bN$. Each term in the product has norm at most $1$. For $j \ge 2$, observe that $1+e(2^{n+1}m/3^j)$ has norm being bounded away from $2$ if the fractional part of $2^{n+1} m/3^j$ is not too close to $0$ or $1$. Suppose that the $(j-1)$'st and $j$'th ternary digits of $2^{n+1} m$ are different. Then the ternary expansion of $2^{n+1}m/3^j$ is
\[
2^{n+1} m/3^j= [\text{integer part}]. ab\dots,
\]
where $ab\in\{01,02,10,12,20,21\}$. This means that the fractional part of $2^{n+1}m/3^j$ is bounded away from $0$  and $1$. Indeed, it is easy to check that $\|\{2^{n+1}m/3^j\}\| \geq 1/9$ in this scenario.  Consequently, we have
\[
\left|\frac{1+e(2^{n+1}m/3^j)}{2}\right| \le \rho.
\]
The lemma now follows from the definition of $D_3(2^{n+1}|m|; L, M)$.
\end{proof}

Finally, we obtain the following estimate. 

\begin{lemma} \label{FinalCount} Let $\tet \in \bR$, and let $I$ be either $[0,1]$ or a subinterval of $[0,1]$ whose centre lies in $K$. Let $L$ be the maximum of the values $L_0(I)$ in Lemmas \ref{MeasureOfInterval} and \ref{FOURIERCOUNT}. Let $n \ge n_0(I)$ be a large positive integer, and let $k$ be an integer in the range
\[
1\le k \leq  \frac{\eps  \log n}{\log \log n},
\]
where $\eps >0$ is a sufficiently small, effectively-computable constant. In particular, $n$ should be sufficiently large that Lemma \ref{digit changes lemma} can be applied, and so that
\[
L+6 \leq \frac {c\log n}{4 \log \log n} \qquad \text{and}
\qquad 2^n \ge 3^{L+5},
\]
where $c$ is the implicit constant in Lemma  \ref{digit changes lemma}.

\begin{enumerate} [(a)]
\item If $M$ is the positive integer satisfying
\begin{equation} \label{M1}
3^{-5-M} < 2^{-n-k} \le 3^{-4-M},
\end{equation}
then
\[
|\cC_M \cap 0.2A_n(M)  \cap 0.2 I | \gg 2^{M-k} \mu(I).
\]
\item If $M$ is the positive integer satisfying
\begin{equation}\label{M2}
3^{5-M} < 2^{-n-k} \le 3^{6-M},
\end{equation}
then
\[
|\cC_M  \cap (45A_n(M)+\tet) \cap 5I | \ll 2^{M-k} \mu(I).
\]
\end{enumerate} 

The values of $\eps$ and the implicit constants do not depend on $\theta, n, k, I$.
\end{lemma}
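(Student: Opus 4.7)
The plan is to apply Lemma \ref{FOURIERCOUNT} with $r=2^{n+k}$ and with an interval $J$ obtained from $I$ by a suitable rescaling, then to extract the $m=0$ main term and bound the $m\neq 0$ contribution using the ternary digit-change input of Lemmas \ref{digit changes lemma} and \ref{DIGITCHANGE}. In both parts, the $m=0$ term is $2^{M-L-k}\hat\phi(0)\,|\cL_L\cap J|$, and Lemma \ref{MeasureOfInterval} together with the doubling of $\mu$ shows that this has size of order $2^{M-k}\mu(I)$, which is exactly the magnitude claimed.

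For part~(a) I will take $y=0$ and $J=0.2I$, adjusting the radius by a negligible amount if needed so that the endpoints of $J$ lie outside $K$ and $L_0(J)\le L$. Condition \eqref{M1} forces $2/r\le 2\cdot 3^{-4-M}<0.2\cdot 3^{-M}$, so each nonzero bump in the definition of $f$ is supported in a single ball of $0.2 A_n(M)$ and distinct bumps are essentially disjoint; thus $0\le f=O(1)$ and $\{f>0\}\subseteq 0.2 A_n(M)$. Consequently
\[
|\cL_M\cap 0.2I\cap 0.2 A_n(M)|\gg \sum_{x\in \cL_M\cap 0.2I}f(x),
\]
and since $\cL_M\subseteq\cC_M$ the desired lower bound will follow from a lower bound on this Fourier sum. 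For part~(b) I will take $y=-\theta$ and $J=5I$. Condition \eqref{M2} gives $1/r=2^{-n-k}>243\cdot 3^{-M}>45\cdot 3^{-M}$, so $\phi_r\equiv 1$ on $[-45/3^M,45/3^M]$, hence $f\ge \mathbf{1}_{45A_n(M)+\theta}$ and
\[
|\cL_M\cap 5I\cap (45A_n(M)+\theta)|\le \sum_{x\in \cL_M\cap 5I}f(x).
\]
The corresponding bound for $\cR_M$ will follow from the reflection $x\mapsto 1-x$, which swaps $\cL_M\leftrightarrow\cR_M$, preserves $45A_n(M)$, replaces $\theta$ by $-\theta$, and maps $5I=B(z,5r)$ to $B(1-z,5r)$ with $1-z\in K$, reducing the $\cR_M$ count to another application of part~(b).

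The main obstacle is controlling the $m\neq 0$ contribution, which by Lemma \ref{DIGITCHANGE} is at most
\[
2^{M-L-k}|\cL_L\cap J|\sum_{1\le |m|\le 2^kT}|\hat\phi(2^{-k}m)|\,\rho^{D_3(2^{n+1}|m|;L,M)}.
\]
Taking $T=2$ and $N=k+1$, the rapid decay \eqref{RapidDecay} absorbs the tail $|m|>2^k$ at cost $O(1)$ and renders the $O_N$ remainder in Lemma \ref{FOURIERCOUNT} negligible, reducing matters to showing $2^k\max_{1\le|m|\le 2^{k+1}}\rho^{D_3(2^{n+1}|m|;L,M)}\ll 1$. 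Lemma \ref{digit changes lemma} gives $D_3(2^{n+1}|m|)+D_2(2^{n+1}|m|)\gg \log n/\log\log n$; since multiplying by $2^{n+1}$ only appends zeros in base $2$, we have $D_2(2^{n+1}|m|)\le D_2(|m|)+1\le k+O(1)$, and hence $D_3(2^{n+1}|m|)\gg \log n/\log\log n-k-O(1)$. To pass to the restricted count $D_3(\cdot;L,M)$, at most $L$ digit changes can occur below position $L$, and by \eqref{M1} (or \eqref{M2}) the ternary length of $2^{n+1}|m|$ exceeds $M$ by only $O(1)$, so at most $O(1)$ digit changes can lie above position $M$. Combining, $D_3(2^{n+1}|m|;L,M)\gg \log n/\log\log n-k-L-O(1)$, and since $\rho<0.94$ the error is at most $2^k\rho^{c\log n/\log\log n -k-L-O(1)}$, which is $o(1)$ provided $\varepsilon$ is so small that $\varepsilon(\log 2+|\log\rho|)<c|\log\rho|$. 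This is the source of the smallness hypothesis on $\varepsilon$; once it is secured the main term dominates in both parts, establishing the lemma.
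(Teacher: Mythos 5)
Your overall plan is the same as the paper's: extract the $m=0$ main term of size $\asymp 2^{M-k}\mu(I)$, bound the nonzero frequencies via Lemma \ref{DIGITCHANGE} and the Stewart-type bound, treat $\cR_M$ by reflection, and fix $\eps$ so that the digit-change gain beats the number of terms. However, your treatment of the Fourier truncation error contains a genuine gap.

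You take $T=2$ and $N=k+1$ in Lemma \ref{FOURIERCOUNT} and assert that ``the rapid decay absorbs the tail $|m|>2^k$ at cost $O(1)$''. This is not correct. First, the actual size of the truncated tail is
\[
\sum_{|m|>2^{k+1}} |\hat\phi(m/2^k)| \;\approx\; 2^{k+1}\int_2^\infty |\hat\phi(t)|\,\mathrm{d}t \;\asymp\; 2^k,
\]
since the summands form a Riemann sum over a $2^{-k}$-spaced grid and $\hat\phi$ is not compactly supported ($\phi\ge 0$ forces $\int_2^\infty|\hat\phi|>0$). So the tail is $\Theta(2^k)$, not $O(1)$. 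Second, and independently, the $O_N$ term in Lemma \ref{FOURIERCOUNT} carries an implied constant depending on $N$ --- it comes from $|\hat\phi(t)|\ll_N(1+|t|)^{-N}$, whose constant for a compactly supported smooth $\phi$ grows without bound in $N$ (roughly like $\|\phi^{(N)}\|_{L^1}(2\pi)^{-N}$, which is at least factorial). Setting $N=k+1$, with $k$ allowed to grow like $\log n/\log\log n$, therefore produces an error term $O_{k+1}(2^{M-L-k-1}|\cL_L\cap I|)$ whose implied constant is not bounded uniformly in $k$; it cannot be absorbed into the main term $\asymp 2^{M-k}\mu(I)$.

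The paper avoids this by keeping $N$ a fixed large integer and instead letting $T$ grow with $k$: $T=\max\{1,(2c_N/\hat\phi(0))^{1/N}2^{k/N}\}$, so $T^N\gg 2^k$ while all implicit constants remain absolute. The price is that the Fourier sum then runs over $|m|\le 2^kT\asymp 2^{k(1+1/N)}$, slightly more terms than your $2^{k+1}$, but the digit-change gain from Lemma \ref{digit changes lemma} still dominates provided $\eps$ is chosen small enough (this also forces one to track $\log T$ and $\Delta_3(T)$ as additive losses in the digit-change count, which the paper does). If you replace your choice of $(T,N)$ by the paper's --- fixed $N$, $T\approx 2^{k/N}$ --- then the rest of your argument, including the passage from $D_3(2^{n+1}|m|)$ to $D_3(2^{n+1}|m|;L,M)$ (correctly losing at most $L$ changes below and $O(1)$ changes above, plus the contribution of $\Delta_3(T)$), and the reflection handling of $\cR_M$ in part (b), goes through and matches the paper's proof.
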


\begin{proof} 
Recall that $\mu$ has the doubling property so, if $I \ne [0,1]$ then by rescaling $I$, we may assume that its endpoints do not lie in the Cantor set. To prove part (a), it suffices to prove that
\[
|\cC_M \cap 0.2A_n(M) \cap I | \gg 2^{M-k} \mu(I).
\]
This can be seen via a similar argument to that used at the beginning of the proof of Lemma \ref{MeasureOfInterval}.

By the construction of the bump function $\phi$, assuming \eqref{M1} we have
\begin{equation} \label{minorant}
|\cC_M \cap 0.2A_n(M) \cap I | \geq |\cL_M \cap 0.2A_n(M) \cap I | \ge \sum_{x \in \cL_M \cap I \mmod 1} f_n(x;0),
\end{equation}	
with the notation of Lemma \ref{FOURIERCOUNT}. Indeed, as $\phi_r$ is supported on $[-2^{1-n-k},2^{1-n-k}]$ and has image in $[0,1]$, the right-hand side is bounded above by the number of pairs $(b,x) \in \{0,1,\ldots,2^n\} \times (\cL_M \cap I)$ such that
\[
\Bigl| x - \: \frac b{2^n} \Bigr| \le \frac2{3^{M+4}},
\]
and this count is in turn is bounded above by $|\cL_M \cap 0.2A_n(M) \cap I |\leq |\cC_M \cap 0.2A_n(M) \cap I |$. 

Using Lemma \ref{FOURIERCOUNT} with $y=0$ and a suitably large fixed $N$, together with \eqref{minorant}, we see that for some constant $c_N>0$ we have 
\begin{align*}
|\cC_M \cap 0.2A_n(M) \cap I | &\geq 2^{M-L-k} \left(\sum_{|m|\leq 2^k T} \hat{\phi}(2^{-k}m)\sum_{x \in \cL_L \cap I} e(2^n m x) \prod_{j = L+1}^M \frac{1 + e(2^{n+1}m/3^j)}2 \right)\\
&-2^{M-L}c_N\frac{|\cL_L\cap I|}{T^N},
\end{align*} 
where $T\geq 1$ and $M \ge L$. The zero frequency, i.e. $m=0$, contributes
\[
2^{M-L-k}\hat{\phi}(0) |\cL_L \cap I|=c_1 2^{M-k}\mu(I),
\]
where $c_1>0$ and the above equality can be seen as the definition of $c_1$. Moreover, by Lemma \ref{MeasureOfInterval}, 
\[
2^{M-L-k}\hat{\phi}(0) |\cL_L \cap I| \asymp 2^{M-k} \mu(I).
\]
Thus we see that $c_1$ is bounded (from above and away from zero) by absolute constants, i.e. $E^{-1}<c_1<E$ for some absolute constant $E>1.$

 For the remaining terms, we first deal with $2^{M-L}c_N|\cL_L\cap I|/T^N.$ We choose $T$ to be such that
\[
2^{M-L}c_N\frac{|\cL_L\cap I|}{T^N}\leq 0.5c_12^{M-k}\mu(I).
\] 
For the above to hold, we need 
\[
T^N\geq \frac{2c_N}{\hat{\phi}(0)}2^k. 
\]
We set the value of $\log T$ to be $\max\left\{0, \frac{k\log 2}{N}+\frac{\log (2c_N/\hat{\phi}(0))}{N}\right\}.$ That is, we choose
\[
T=\max\{1, (2c_N/\hat{\phi}(0))^{1/N}2^{k/N}\}.
\]

Now for the sum with $0<|m|\leq 2^k T$, the triangle inequality gives 
\begin{align*}
&\left|\sum_{0 < |m|\leq 2^k T} \hat{\phi}(2^{-k}m)
\sum_{x \in \cL_L \cap I} e(2^n m x) \prod_{j = L+1}^M \frac{1 + e(2^{n+1}m/3^j)}2\right|
 \\
&\leq \hat{\phi}(0) |\cL_L \cap I| \sum_{0 < |m|\leq 2^k T}\left|\prod_{j= L+1}^M \frac{1+e(2^{n+1}m/3^j)}{2}\right|.
\end{align*}
In light of Lemma \ref{MeasureOfInterval}, this is bounded above by a constant times
\[
2^L \mu(I) \sum_{0 < |m|\leq 2^k T}\left|\prod_{j= L+1}^M \frac{1+e(2^{n+1}m/3^j)}{2}\right|.
\]

Next, we will apply Lemma \ref{DIGITCHANGE}, and so we need to estimate $D_3(2^n|m|;L,M)$. First recall, by Lemma \ref{digit changes lemma}, that for some constant $c>0$ we have
\begin{align} \label{digit changes consequence}
D_2(2^n |m|)+D_3(2^n |m|)\geq c\frac{\log\log{(2^n |m|)}}{\log\log\log{(2^n |m|)}},
\end{align}  
and note that
\[
D_3(2^n |m|; L, M)\geq D_3(2^n |m|)- (\Delta_3(2^n |m|)-M + L),
\] 
where $\Delta_3(2^n |m|)$ is the number of ternary digits of $2^n| m|$. From \eqref{M1}, we see that \mbox{$M+5= \Del_3(2^{n+k})$.} As  $2^{n}|m|\leq 2^{n+k} T$ (since $|m|\leq 2^k T$), we must therefore have \[\Delta_3(2^n|m|) \leq \Delta_3(2^{n+k}T) \leq \Delta_3(2^{n+k}) + \Delta_3(T),\] and consequently $\Delta_3(2^n |m|) - M \le \Delta_3(T)+5$. 
Hence
\[
D_3(2^n |m|; L, M)\geq D_3(2^n |m|) - \Delta_3(T) - L - 5.
\]
Furthermore
\[
D_2(2^n |m|) \le 1 + D_2(|m|) \leq 1+ \frac{\log |m|}{\log 2}.
\]
Using \eqref{digit changes consequence}, we now see that
\begin{align}
\notag D_3(2^n |m|; L, M)&\geq D_{3}(2^n|m|) - \Delta_3(T) - L-5 \\
 \notag                 &\geq \frac{c\log{\log{(2^n|m|)}}}{\log{\log{\log{(2^n|m|)}}}}-D_2(2^n|m|)-\Delta_3(T)-L-5 \\                    
  \label{D3bound}                &\geq \frac{c\log\log{(2^n |m|)}}{\log\log\log{(2^n |m|)}}- \frac{\log |m|}{\log 2} -6-\Delta_3(T) - L.
\end{align}
As $\log \log(\cdot) / \log \log \log(\cdot)$ is increasing, we may replace its argument by $2^n$ for a lower bound, to see that the first term on the right-hand side of \eqref{D3bound} is at least $\frac{c\log n}{2\log \log n}$. Meanwhile, the upper bound on $|m|$ assures us that $\log |m| \le k\log 2+ \log T$. Therefore
\[
D_3(2^n |m|; L, M) \ge \frac{c\log n}{2\log \log n} - k- \frac{\log T}{\log 2}-\Delta_3(T) - L - 6.
\]
Since $n \ge n_0(I)$, we may assume that $n$ is arbitrarily large compared to $L$, so that $L+6 \le \frac {c \log n}{4 \log \log n}$ and, hence
\[
D_3(2^n |m|; L, M) \ge \frac {c \log n}{4 \log \log n} - k-\frac{\log T}{\log 2}-\Delta_3(T).
\]
Moreover, \[\Delta_3(T)=\lfloor\log T/\log 3\rfloor +1 \leq \log T +1\leq \log T+k.\]
Recalling that \mbox{$\log T=\max\left\{0, \frac{k\log 2}{N}+\frac{\log (2c_N/\hat{\phi}(0))}{N}\right\}$} as well as $k\geq 1$ and noting that $\hat{\phi}(0)\geq2$  we now see that
\begin{align*}
D_3(2^n|m|;L,M) &\geq \frac{c\log{n}}{4\log{\log{n}}} - 2k - 3\log{T}\\ 
                &\geq \frac{c\log{n}}{4\log{\log{n}}} - k\left(2+\frac{3\log{2}}{N} + \frac{3\log\left(\frac{2c_N}{\hat{\phi}(0)}\right)}{N}\right) \\
                &\geq \frac{c\log n}{4\log\log n} -\left(2+3\frac{\log 2}{N}+3|\log (2c_N)|\right)k.
\end{align*}

For convenience, we write
\[
\kappa_{n,N,k}=\frac{c\log n}{4\log\log n} -\left(2+3\frac{\log 2}{N}+3|\log (2c_N)|\right)k.
\]
Now Lemma \ref{DIGITCHANGE} gives
\[
\sum_{0 < |m|\leq 2^k T}\left|\prod_{j\leq M} \frac{1+e(2^{n+1}m/3^j)}{2} \right| \ll 2^k T\rho^{\kappa_{n,N,k}}.
\] 
Recalling that \[
T=\max\left\{1, \left(\frac{2c_N}{\hat{\phi}(0)}\right)^{1/N}2^{k/N}\right\}\leq 1+\left(\frac{2c_N}{\hat{\phi}(0)}\right)^{1/N}2^{k/N},
\]
we see that
\[
2^{-k}\sum_{0 < |m|\leq 2^k T}\left|\prod_{j\leq M} \frac{1+e(2^{n+1}m/3^j)}{2} \right| \ll \rho^{\kappa_n,N,k}+\left(\frac{2c_N}{\hat{\phi}(0)}\right)^{1/N}2^{k/N}\rho^{\kappa_{n,N,k}}.
\]
Observe that 
\[
2^{k/N}\rho^{\kappa_{n,N,k}}=\rho^{\kappa_{n,N,k}+k\log 2/(N\log \rho)    }.
\]
Again, for convenience, we write
\[
\kappa'_{n,N,k}=\kappa_{n,N,k}+k\log 2/(N\log \rho)  .
\]
Thus, the non-zero frequencies contribute at most
\[
C \cdot 2^{M-L} 2^L \mu(I) {\rho}^{\kappa'_{n,N,k}} = C\cdot2^M \mu(I){\rho}^{\kappa'_{n,N,k}},
\]
for some constant $C>0$. Recalling that the contribution from the zero frequency is $c_1 2^{M-k} \mu(I)$, for some $c_1$ bounded below by a positive absolute constant, we glean that
\begin{align}
\notag|\cC_M \cap 0.2A_n(M) \cap I |  &\ge c_1 2^{M-k} \mu(I) - C \cdot 2^M \mu(I){\rho}^{\kappa'_{n,N,k}}- 0.5 c_1 2^{M-k} \mu(I) \\
\label{Es}&\ge 2^{M-k} \mu(I) (0.5c_1 - C{\rho}^{\kappa'_{n,N,k}+ k\log 2/\log \rho}).
\end{align}

We have
\[\kappa'_{n,N,k}+k\frac{\log{2}}{\log{\rho}} = \frac{c\log{n}}{4\log{\log{n}}} - \left(2+\frac{3\log{2}}{N}+3|\log(2c_N)|-(1+N^{-1})\frac{\log{2}}{\log{\rho}}\right)k.\]
Next, we write
\[
c'_N=2+\frac{3\log 2}{N}+3|\log (2c_N)|-(1+N^{-1})\frac{\log{2}}{\log{\rho}}.
\]
Then we choose the value of $\varepsilon$ to be 
\[
\varepsilon=\frac{c}{8c'_N}.
\]
As $k \le \eps \log n / \log \log n$, this ensures that
\[
\kappa'_{n,N,k}+k\frac{\log 2}{\log \rho} \geq \frac{c\log n}{8\log\log n}.
\]
Recalling that $\rho\in (0,1)$ is an absolute constant and that $c_1$ is bounded away from 0, the result concludes by using \eqref{Es}.\\

The second inequality can be proved similarly: the zero frequency again dominates. As Lemma~\ref{FOURIERCOUNT} applies to $\cL_M$ and not $\cC_M$, we note that
\[
|\cC_M  \cap (45A_n(M)+\tet) \cap 5I | 
= |\cL_M  \cap (45A_n(M)+\tet) \cap 5I | 
+ |\cR_M  \cap (45A_n(M)+\tet) \cap 5I |,
\]
and by symmetry that
\[
|\cR_M  \cap (45A_n(M)+\tet) \cap 5I | 
= |\cL_M \cap (1-(45A_n(M)+\tet)) \cap (1-5I)|.
\]
Moreover, observe that
\[
1-(45A_n(M)+\tet) = (1 - 45A_n(M)) -  \tet = 45A_n(M) -\tet.
\]
Thus, we may also estimate
\[
|\cR_M  \cap (45A_n(M)+\tet) \cap 5I |
= |\cL_M \cap (45A_n(M) -\tet) \cap (1-5I)|
\]
in the same way using Lemma \ref{FOURIERCOUNT}, with $1-I$ in place of $I$, since $1-5I$ is the dilation of the interval $1-I$ by a factor of 5 about its midpoint.
\end{proof}

The following technical lemma enables us to slightly relax the hypotheses \eqref{M1} and \eqref{M2}.

\begin{lemma}\label{shifting parameters}
Let $\tet \in \bR$, and let $I$ be a real interval. Let $k, M, N, J,t \in \N$. Then, supposing that $0 \leq J < M$ and $2^{-n}> 2t/3^{M-J},$ we have
\[|\cC_{M-J}\cap (tA_n(M-J)+\theta) \cap I|+ 1\gg |\cC_{M}\cap (t A_n(M)+\theta) \cap I| ,\]
where the implied constant depends only on $J$. If $I\supseteq [0,1]$, then the $+1$ term on the left-hand side can be removed.
\end{lemma}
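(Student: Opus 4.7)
The plan is to introduce a ``coarsening'' projection $\pi\colon \cC_M \to \cC_{M-J}$ obtained by discarding the final $J$ ternary digits. Explicitly, for $x = \sum_{j=1}^M \eps_j 3^{-j} \in \cL_M$ with $\eps_j \in \{0,2\}$, set $\pi(x) = \sum_{j=1}^{M-J} \eps_j 3^{-j} \in \cL_{M-J}$, and extend $\pi$ to $\cR_M$ via the symmetry $x \mapsto 1-x$. A short geometric-series computation gives $|x - \pi(x)| \leq 3^{-(M-J)} - 3^{-M}$, and each fibre of $\pi$ has size exactly $2^J$, since the discarded digits are free choices in $\{0,2\}$. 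The case $J = 0$ is trivial, so from now on I assume $J \geq 1$.

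The key step is to verify that $\pi$ sends $\cC_M \cap (tA_n(M) + \theta)$ into $\cC_{M-J} \cap (tA_n(M-J) + \theta)$. If $|x - (a/2^n + \theta)| < t/3^M$ for some integer $a$ witnessing membership in $tA_n(M) + \theta$, then the triangle inequality yields
\[
|\pi(x) - (a/2^n + \theta)| < \frac{t}{3^M} + 3^{-(M-J)} - 3^{-M} = \frac{t - 1 + 3^J}{3^M} \leq \frac{t \cdot 3^J}{3^M} = \frac{t}{3^{M-J}},
\]
where the penultimate inequality $t - 1 + 3^J \leq t \cdot 3^J$ is immediate for $t \geq 1$ and $J \geq 1$. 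Hence $\pi(x) \in tA_n(M-J) + \theta$, witnessed by the same $a$.

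Next I restrict to $I$. A point $x \in \cC_M \cap (tA_n(M) + \theta) \cap I$ can fail to have $\pi(x) \in I$ only if $\pi(x)$ lies within distance $3^{-(M-J)}$ of the boundary of $I$. Since consecutive elements of $\cC_{M-J}$ are at least $3^{-(M-J)}$ apart, each endpoint of $I$ admits only $O(1)$ points of $\cC_{M-J}$ within distance $3^{-(M-J)}$, so the number of ``lost'' points is $O(1)$. Combining this with the fibre bound $|\pi(S)| \geq |S|/2^J$ for any $S \subseteq \cC_M$, I obtain
\[
|\cC_M \cap (tA_n(M) + \theta) \cap I| \leq 2^J \bigl(|\cC_{M-J} \cap (tA_n(M-J) + \theta) \cap I| + O(1)\bigr),
\]
which rearranges to the stated estimate with implied constant depending only on $J$. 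When $I \supseteq [0,1]$, every $\pi(x) \in \cC_{M-J} \subseteq [0,1] \subseteq I$ automatically, so the ``$+1$'' correction vanishes. The only delicate part of the argument is the triangle-inequality step, which is tight enough to preserve the dilation factor $t$ on the right-hand side rather than inflating it; this tightness is what makes the clean form of the lemma possible.
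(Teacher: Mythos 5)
Your proof is correct, and it takes a genuinely different route from the paper's. The paper argues via interval counting: it passes from $t A_n(M) + \theta$ to the larger set $tA_n(M-J)+\theta$, counts the intervals of $K_M$ and of $K_{M-J}$ that meet $tA_n(M-J)+\theta$ (calling these $\cN(M)$ and $\cN(M-J)$), uses the hypothesis $2^{-n} > 2t/3^{M-J}$ together with $2t/3^{M-J} > 3^{-(M-J)}$ to sandwich $|\cC_{M-j} \cap (tA_n(M-J)+\theta)|$ between $\cN(M-j)$ and $2\cN(M-j)$ for $j \in \{0, J\}$, and then exploits the fact that each $K_{M-J}$-interval contains $2^J$ many $K_M$-intervals. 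Your proof instead constructs an explicit $2^J$-to-one truncation map $\pi\colon \cC_M \to \cC_{M-J}$, shows $|x - \pi(x)| \le 3^{-(M-J)} - 3^{-M}$, and checks via a triangle inequality — tight precisely because $t \ge 1$ and $J \ge 1$ give $t + 3^J - 1 \le t\cdot 3^J$ — that $\pi$ sends $\cC_M \cap (tA_n(M)+\theta)$ into $\cC_{M-J}\cap(tA_n(M-J)+\theta)$; the $O(1)$ boundary loss near $\partial I$ (at most a bounded number of $\cC_{M-J}$-points within $3^{-(M-J)}$ of each endpoint of $I$, each contributing at most $2^J$ preimages) then yields the stated bound. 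Notably, your argument never invokes the hypothesis $2^{-n} > 2t/3^{M-J}$, so it proves a marginally stronger statement; the paper needs that hypothesis to guarantee the balls comprising $tA_n(M-J)+\theta$ are disjoint, which makes its interval-counting clean. Your projection-based argument is the more streamlined of the two and makes the $2^J$ multiplicity completely transparent, while the paper's argument has a more directly geometric flavor and avoids having to verify well-definedness of the truncation map on $\cL_M \sqcup \cR_M$ (which does require checking that $\cL_M \cap \cR_M = \emptyset$, though that is easy).
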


\begin{proof} 
First, assume that $I \supseteq [0,1].$ Elements of $\cC_{M-J}$ are endpoints of the intervals of length $3^{-(M-J)}$ forming $K_{M-J}.$ We see from the inequality $2^{-n} > 2t/3^{M-J}$ that $tA_n(M-J)+\theta$ is a disjoint union of intervals of length $2t/3^{M-J}$ centred at shifted dyadic rationals in $[0,1]$ with denominator $2^n.$ As $2t/3^{M-J} > 1/3^{M-J},$ observe that for each interval forming $K_{M-J}$ and intersecting \mbox{$tA_n(M-J)+\theta$,} at least one (and trivially at most two) of the endpoints must be included in $tA_n(M-J)+\theta.$ Writing $\cN(M-J)$ for the number of such intervals, and $\cN(M)$ for the number of intervals forming $K_M$ and intersecting $tA_n(M-J)+\theta$, and employing the same argument for $\cC_M$ as for $\cC_{M-J}$, we therefore have
\[
1 \le \frac{|\cC_{M-j} \cap (tA_n(M-J) + \theta)|}{\cN(M-j)} \le 2 \qquad (j=0,J).
\]
Let $I_M$ be such an interval counted by $\cN(M)$. Then $I_M\subset I_{M-J}$ for some interval $I_{M-J}$ forming $K_{M-J}$ for which $I_{M-J}\cap (tA_n(M-J)+\theta)\neq\emptyset.$ Clearly, each interval forming $K_{M-J}$ contains $2^{J}$ many intervals forming $K_M.$ From here we see that
\begin{align*}
|\cC_{M}\cap (tA_n(M)+\theta)|
&\leq |\cC_{M}\cap (tA_n(M-J)+\theta)| \\
&\le 2 \cN(M) \\
& \le 2^{J+1} \cN (M-J) \\
&\leq 2^{J+1}|\cC_{M-J}\cap (tA_n(M-J)+\theta)|,
\end{align*}
where the first inequality follows because $tA_n(M)+\theta \subseteq tA_n(M-J)+\theta$.

Now, let $I$ be a general subinterval of $\bR$. As above, we find that $|\cC_M\cap (tA_n(M-J)+\theta)\cap I|$ is at most double the number of intervals forming $K_M$ and intersecting $(tA_n(M-J)+\theta)\cap I.$ Let $I_M, I_{M-J}$ be as in above. The additional consideration is that endpoints of $I_{M-J}$ may not be in $I.$ However, this can happen for at most two intervals forming $K_{M-J}.$ Thus, writing $\cN(M-J;I)$ for the number of intervals forming $K_{M-J}$ and intersecting $(tA_n(M-J)+\theta)\cap I,$ we have
\[
\cN(M-J;I) \leq |\cC_{M-J}\cap (tA_n(M-J)+\theta)\cap I|+2.
\]
Finally, we obtain
\begin{align*}
|\cC_{M}\cap (tA_n(M)+\theta)\cap I| &\le |\cC_{M}\cap (tA_n(M-J)+\theta)\cap I| \\
                                     &\leq 2^{J+1} \cN(M-J;I)\\
                                     &\leq 2^{J+1} (|\cC_{M-J}\cap (tA_n(M-J)+\theta)\cap I|+2). \qedhere
\end{align*}
\end{proof}

\section{Convergence theory} \label{CT}

Here we establish Theorem \ref{main convergence theorem}. As $\sum_{n=1}^\infty \psi(2^n) < \infty$, we may assume that $\psi(2^n) < 3^{-99}$ for $n$ sufficiently large. Let $n$ be such a large positive integer, such that additionally $\log{\log{\log{n}}} > 3/\eps$, and Lemmas \ref{connection}, \ref{DroppingDown}, and \ref{FinalCount} may be applied, where $\eps$ is from Lemma \ref{FinalCount}. Next, let
\[
k_{n} = \min\left(3\Bigl \lfloor \frac{\log n}{\log \log n \cdot \log \log \log n} \Bigr \rfloor, \Bigl \lfloor \frac{-\log \psi(2^n)}{\log 2} \Bigr \rfloor+1\right),
\]
and choose $M, N \in \bN$ according to
\begin{equation} \label{MN}
3^{-5-M} < 2^{-n-k_{n}} \le 3^{-4-M} \qquad \text{and} \qquad 3^{1-N} < \frac{\psi(2^n)}{5 \cdot 2^n} \le 3^{2-N}.
\end{equation}
These choices ensure that we have all of the inequalities in \eqref{ValidDrop}, and also the inequalities
\[
1\leq k_n \le \frac{\eps \log n}{\log \log n},
\]
which will be needed when we apply Lemma \ref{FinalCount}. Now, fix $I=[0,1]$ and observe that 
\[\mu(A_n)=\mu(A_n \cap I).\]
By Lemma \ref{connection}, we have
\[\mu(A_n \cap I) \leq 2^{-(N-1)}|\cC_N \cap 5A_n \cap 5I|.\]
Applying Lemma \ref{DroppingDown}, we deduce that
\[|\cC_N \cap 5A_n \cap 5I| \ll |\cC_M \cap 5A_n(M) \cap 5I| \leq |\cC_M \cap 45A_n(M) \cap 5I|.\]
Next, we apply Lemma \ref{FinalCount}(b) with $\tet = 0$. Our current parameters $M,n,k=k_n$ do not satisfy the conditions of \eqref{M2} required to apply Lemma \ref{FinalCount}(b). However, the parameters $M-10,n,k_n$ do satisfy \eqref{M2}, and so
\[
|\cC_{M-10}\cap 45A_n(M-10)\cap 5I|\ll 2^{M-10-k_n}\mu(I)
\ll 2^{M-k_n} \mu(I).
\] 
Now we use Lemma \ref{shifting parameters} to deduce that
\[
|\cC_{M}\cap 45A_n(M)\cap 5I|\ll |\cC_{M-10}\cap 45A_n(M-10)\cap 5I|,
\]
where the implicit constant is absolute; here we note from the calculation
\[
3^{-5-M} < 2^{-n-k_n} < \frac{\psi(2^n)}{2^n} < \frac{3^{-99}}{2^n}
\]
that the required condition $2^{-n} > \frac{90}{3^{M-10}}$ is met.

Therefore
\[\mu(A_n) \ll 2^{-N}|\cC_M \cap 45A_n(M) \cap 5I| \ll 2^{M-k_n-N}\mu(I) = 2^{M-k_n-N}.\]
Moreover, since $3^M \ll 2^{n+k_n}$ and $3^{-N} \ll \frac{\psi(2^n)}{2^n}$ by hypothesis, we have
\[2^{M - N -k_n} = (3^M \times 3^{-N})^\gamma \times 2^{-k_n} \ll \left(2^{n+k_n}\times\frac{\psi(2^n)}{2^n}\right)^{\gamma} \times 2^{-k_n} \ll (2^{k_n} \psi(2^n))^\gamma \times 2^{-k_n},\]
and hence
\[\mu(A_n) \ll (2^{k_n} \psi(2^n))^\gamma \times 2^{-k_n}.\]

Observe that if \[k_n=\Bigl \lfloor \frac{-\log \psi(2^n)}{\log 2} \Bigr \rfloor+1\]
then \[(2^{k_n} \psi(2^n))^\gamma \times 2^{-k_n}\ll \psi(2^n).\]
Otherwise, we have
\[
(2^{k_n} \psi(2^n))^\gamma \times 2^{-k_n}= 2^{-3(1-\gamma)\bigl \lfloor \frac{\log n}{\log \log n \cdot \log \log \log n} \bigr \rfloor}\psi(2^n)^{\gamma}\ll 2^{-\log n/(\log\log n\cdot\log\log\log n)}\psi(2^n)^\gamma.
\]
From the above arguments we see that
\[
\sum_{n=1}^{\infty} \mu(A_n) \ll \sum_{n=1}^\infty 2^{-k_n (1-\gam)} \psi(2^n)^\gamma \ll \sum_{n=1}^{\infty} (2^{-\log n / (\log \log n \cdot \log \log \log n)} \psi(2^n)^\gamma+\psi(2^n)),
\]
which converges by our assumption \eqref{main convergence theorem sum}. Thus, by the convergence Borel--Cantelli Lemma (Lemma~\ref{first Borel-Cantelli}), the proof of Theorem \ref{main convergence theorem} is complete.

\section{Divergence theory}\label{divergence theory proof}

Here we establish Theorem \ref{main divergence theorem}. Let $I = B(z,r)$, for some $z \in K$ and some $r \in (0,1)$. Define the `localised' probability measure $\mu_I$ by
\[
\mu_I(A) = \frac{\mu(A \cap I)}{\mu(I)},
\]
for Borel sets $A$. By Lemma \ref{positive measure lemma}, recalling that $\mu$ has the necessary properties as stated at the beginning of \S \ref{FourierAnalysis}, it suffices to prove that
\begin{equation} \label{LocalLower}
\mu_I(W_2(\psi)) \gg 1,
\end{equation}
with an implicit constant independent of $I$.\\

We begin by showing that if $n \ge n_0(I)$, where $n_0(I)$ is a sufficiently large integer such that we can apply Lemmas \ref{connection} and \ref{FinalCount}, then $\mu_I(A_n) \gg \psi(A_n)$, where the implicit constant does not depend on $I$. Define $N, k \in \bN$ according to
\[
3^{-N} \le \frac{\psi(2^n)}{5 \cdot 2^n} < 3^{-(N-1)} \qquad \text{and} \qquad 3^{-5-N} < 2^{-n-k} \le 2 \cdot 3^{-5-N}.
\]
Now, since $3^{-N} \leq \frac{\psi(2^n)}{2^n}$, we have \[|\cC_N \cap 0.2A_n \cap 0.2I| \geq |\cC_N \cap 0.2A_n(N) \cap 0.2I|.\]
Combining this with Lemma \ref{connection} yields
\[\mu_I(A_n) \ge \frac{2^{-(N+1)}|\cC_N \cap 0.2A_n(N) \cap 0.2I|}{\mu(I)}.\]
Thus, it follows from Lemma \ref{FinalCount}(a) that
\[\mu_I(A_n) \gg \frac{2^{-(N+1)} \times 2^{N-k} \mu(I)}{\mu(I)} \gg 2^{-k}.\] 
As 
\[
2^{-k} \gg 2^n 3^{-N} \gg \psi(2^n) = 2^{- \log \log n / \log \log \log n},
\] 
we have
\begin{align} \label{A_n divergence estimate}
\mu_I(A_n) &\gg \psi(2^n),
\end{align}
as claimed.

\bigskip

Next, we wish to bound $\mu_I(A_n \cap A_m)$. We will do this analytically when
\[
n_0(I) \le n \le m \le n^+,
\]
for $n^+$ to be specified in due course. By the triangle inequality, if $B\left(\frac{a}{2^n}, \frac{\psi(2^n)}{2^n}\right)$ intersects $B\left(\frac{b}{2^m}, \frac{\psi(2^m)}{2^m}\right)$ then
\[
\Bigl| \frac a{2^n} - \frac b {2^m} \Bigr| < \frac {\psi(2^n)}{2^n} + \frac{\psi(2^m)}{2^m},
\]
whereupon
\begin{align} \label{divergence auxiliary inequality}
|b - 2^{m-n}a| &< 2^{m-n} \psi(2^n) + \psi(2^m).
\end{align}
We break the solutions $(a,b)$ to \eqref{divergence auxiliary inequality} into at most 
\[
2(1 + 2^{m-n} \psi(2^n) +  \psi(2^m)) \ll 1 + 2^{m-n} \psi(2^n)
\]
groups, according to the value of the integer $h = b - 2^{m-n}a$, the idea being to handle each group analytically.

For a given value of $h = b - 2^{m-n}a$, the corresponding intersections $B\left(\frac{a}{2^n}, \frac{\psi(2^n)}{2^n}\right) \cap B\left(\frac{b}{2^m}, \frac{\psi(2^m)}{2^m}\right)$ are contained in 
\begin{align*}
X_h:=\bigcup_{a=0}^{2^n} B\Bigl( \frac{a}{2^n} + \frac{h}{2^m}, \frac{\psi(2^m)}{2^m} \Bigr).
\end{align*}
To see this, observe that if
\[ x \in \bigcup_{a = 0}^{2^n} \bigcup_{b=0}^{2^m} B\left(\frac{a}{2^n}, \frac{\psi(2^n)}{2^n}\right) \cap B\left(\frac{b}{2^m}, \frac{\psi(2^m)}{2^m}\right),\]
then, for some $a \in \{0,1,2 , \ldots, 2^n\}$ and $b \in \{0, 1, 2, \ldots, 2^m\}$, we have 
\[x \in B\left(\frac{a}{2^n}, \frac{\psi(2^n)}{2^n}\right) \cap B\left(\frac{b}{2^m}, \frac{\psi(2^m)}{2^m}\right).\]
Next, write $h = b - 2^{m-n} a$. Then,
\[
x  \in B\left(\frac{b}{2^m}, \frac{\psi(2^m)}{2^m}\right) = B\Bigl( \frac{a}{2^n} + \frac{h}{2^m}, \frac{\psi(2^m)}{2^m} \Bigr) \subseteq X_h.
\]
Recalling from \eqref{divergence auxiliary inequality} that $|h| < 2^{m-n} \psi(2^n) + \psi(2^m)$, it follows that
\begin{align}
\notag \mu_I(A_n \cap A_m) &= \mu_I\left(\bigcup_{a = 0}^{2^n} \bigcup_{b=0}^{2^m} B\left(\frac{a}{2^n}, \frac{\psi(2^n)}{2^n}\right) \cap B\left(\frac{b}{2^m}, \frac{\psi(2^m)}{2^m}\right)\right) \\
\label{groups} &\le \sum_{|h| < 2^{m-n} \psi(2^n) + \psi(2^m)} \mu_I(X_h).
\end{align}

We will use our Fourier-analytic machinery from Section \ref{FourierAnalysis} to bound each $\mu_{I}(X_h)$. To begin with, the triangle inequality gives
\[
\mu_I(X_h) \le \sum_{a=0}^{2^n}
\mu \left( B \left( \frac{a}{2^n} + \frac{h}{2^m}, \frac{ \psi(2^m)}{2^m} \right) \right).
\]
Next, we apply \eqref{VersatileMeasure} to each of the intervals $B(a/2^n + h/2^m, \psi(2^m)/2^m) \cap I$ with $L = R$ therein, where $R$ is the positive integer satisfying
\[
3^{1-R} < \frac{\psi(2^m)}{2^m} \le 3^{2-R}.
\]
This gives
\[
\mu(X_h \cap I) \ll 2^{-R} |\cC_R \cap 5 X_h \cap 5I|,
\]
where
\[
5X_h = \bigcup_{a=0}^{2^n} B\Bigl( \frac{a}{2^n} + \frac{h}{2^m}, \frac{5\psi(2^m)}{2^m} \Bigr),
\]
since at most 11 of the balls can contain a given point. We thus have 
\begin{equation} \label{CRbound}
\mu(X_h \cap I) \ll 2^{-R} \left|\cC_R \cap \left(45A_n(R) + \frac{h}{2^m}\right) \cap 5I\right|.
\end{equation}

Let $k(m,n)$ be the positive integer for which
\[
2^{-k(m,n)-9} < \frac{\psi(2^m)}{2^{m-n}} = 2^{n-m-(\log \log m / \log \log \log m)} \leq 2^{-k(m,n)-8}.
\]
We will apply Lemma \ref{FinalCount}(b) with $M \in \bN$ given by
\[
3^{5-M} < 2^{-n-k(m,n)} \le 3^{6-M}.
\]
In order to meet the condition $k(m,n) \le \frac{\eps \log n }{\log \log n}$ required for this, it suffices to have
\[
m - n + \frac{ \log \log m}{\log \log \log m} = o \Bigl(\frac{ \log n}{\log \log n} \Bigr),
\]
and this is assured if we choose
\begin{align}\label{n+}
n^+ = n + \Biggl \lfloor \frac{\log n}{\log \log n \cdot \log \log \log n} \Biggr \rfloor.
\end{align}
To see this, we note that $m \mapsto\frac{\log{\log{m}}}{\log{\log{\log{m}}}}$ is increasing so, with this choice of $n^+$, we have
\begin{align*}
m-n+\frac{\log{\log{m}}}{\log{\log{\log{m}}}} &\leq n + \frac{\log{n}}{\log{\log{n}}\cdot\log{\log{\log{n}}}} - n +\frac{\log{\log{\left(n + \frac{\log{n}}{\log{\log{n}}\cdot\log{\log{\log{n}}}}\right)}}}{\log{\log{\log{\left(n + \frac{\log{n}}{\log{\log{n}}\cdot\log{\log{\log{n}}}}\right)}}}} \\
                                              &=o\left(\frac{\log{n}}{\log{\log{n}}}\right).
\end{align*}
Now Lemma \ref{FinalCount}(b) gives
\begin{equation} \label{CMbound}
\left|\cC_M \cap \left(45 A_n(M) + \frac{h}{2^m}\right) \cap 5I\right| \ll 2^{M-k(m,n)} \mu(I).
\end{equation}

We see from our choices of $M$, $R$, and $k(m,n)$ that
\[
3^{6-M} \ge 2^{-n-k(m,n)} \ge \frac{\psi(2^m)}{ 2^{m-8}} > 2^8 3^{1-R} > 3^{6-R}
\]
and
\[
3^{4-M} < \frac{3^{5-M}}{2} < 2^{-n-k(m,n)-1} < 2^8\frac{\psi(2^m)}{2^{m}} \le 2^8 3^{2-R} < 3^{8-R},
\]
so $R-4 < M < R$. It follows from the above inequalities that $2^{-n} > \frac{90}{3^M}$ and so we may apply Lemma~\ref{shifting parameters} with $R$ and $M$ playing the roles of $M$ and $M-J$ respectively, and this tells us that
\[
\left|\cC_R \cap \left(45 A_n(R) + \frac{h}{2^m}\right) \cap 5I\right| \ll
\left|\cC_M \cap \left(45 A_n(M) + \frac{h}{2^m}\right) \cap 5I\right| + 1.
\]
Combining this with \eqref{CRbound} and \eqref{CMbound} and the fact that $M<R$, we see that 
\begin{equation} \label{Xh}
\mu_I(X_h) \ll \frac{2^{M-k(m,n)} \mu(I) + 1}{2^M \mu(I)}
\ll 2^{-k(m,n)} + \frac{1}{2^M \mu(I)}.
\end{equation}

Note that
\[
3^{-M} \asymp \psi(2^m)/2^m
\]
and
\[
2^{-k(m,n)} \asymp 2^n/3^M \asymp 2^{n-m} \psi(2^m).
\]
Observe also that
\[
2^{-M} = (3^{-M})^\gam \asymp (2^{-k(m,n)-n})^\gam \ll 2^{-n\gam}.
\]
So, since
\[
k(m,n)=o\left(\frac{\log n}{\log\log n}\right),
\]
we must therefore have 
\[\frac{1}{2^M \mu(I)} \ll 2^{-n \gamma}/\mu(I) \ll 2^{-\log{n}} \ll 2^{-k(m,n)} \qquad \text{whenever } n \ge n_0(I).\] 
Substituting this data into \eqref{Xh} gives
\[
\mu_I(X_h) \ll 2^{-k(m,n)} \asymp 2^{n-m} \psi(2^m).
\]
Substituting this into \eqref{groups}, we finally obtain
\begin{equation} \label{intersection estimate}
\mu_I(A_n \cap A_m) \ll (1 + 2^{m-n} \psi(2^n) ) \times 2^{n-m} \psi(2^m) = 2^{n-m} \psi(2^m) + \psi(2^n) \psi(2^m).
\end{equation}

The Chung--Erd\H{o}s inequality (Lemma \ref{Chung-Erdos}) gives
\[
 \mu_I \Biggl( \bigcup_{i=n}^{n^+} A_i \Biggr) \ge \frac{ \displaystyle \Bigl(\sum_{i=n}^{n^+} \mu_I( A_i) \Bigr)^2} { \displaystyle \sum_{i,j=n}^{n^+} \mu_I(A_i \cap A_j) }.
\]
Estimating the denominator using \eqref{intersection estimate} and \eqref{A_n divergence estimate}, we have
\begin{align*}
\displaystyle \sum_{i,j=n}^{n^+} \mu_I(A_i \cap A_j) &\ll \displaystyle \sum_{i,j=n}^{n^+} \mu_I( A_i) \mu_I(A_j) + \sum_{n \le i \leq j \le n^+} \psi(2^i) 2^{i-j} \\
&\ll  \displaystyle \Bigl(\sum_{i=n}^{n^+} \mu_I( A_i) \Bigr)^2 + \displaystyle \sum_{i=n}^{n^+} \mu_I( A_i).
\end{align*}
For large $n$, by \eqref{A_n divergence estimate} we have 
\[
\sum_{t=n}^{n^+} \mu_I(A_t) \gg \sum_{t=n}^{n^+} 2^{-\log \log t/ \log \log \log t}.
\]
If $n$ is large and $n \le t \le n^+$, then $t \le 2n$ and by an application of the mean value theorem we have 
\[
\frac{\log \log t}{\log \log \log t} - \frac{\log \log n}{\log \log \log n} = (t-n) \frac{ \log \log \log c - 1}{c (\log c) (\log \log \log c)^2} \le 1,
\]
for some $c \in [n, n^+]$. Therefore
\[
\sum_{t=n}^{n^+} \mu_I(A_t) \gg 
\frac{ \log n}{\log \log n \cdot \log \log \log n} 
2^{-\log \log n / \log \log \log n},
\]
and this tends to $\infty$ as $n\to\infty$. Indeed, to see that $2^{\log \log n/\log \log \log n}$ is sub-logarithmic, observe that
\[
2^{\log \log n/\log \log \log n} = (\log n)^{\log 2 / \log \log \log n} = (\log n)^{o(1)}.
\]
We thus obtain
\[
 \mu_I\Biggl( \bigcup_{i=n}^{n^+} A_i \Biggr)  \gg \frac{1}{1+\frac{1}{\sum_{t=n}^{n^+} \mu_I(A_t)}} \gg 1.
\]

Finally, let
\[
B_n=\bigcup_{i=n}^{n^+} A_i.
\]
Using the continuity of the measure $\mu_I,$ we see that 
\begin{align*}
\mu_I\left(\limsup_{n\to\infty} A_n\right) &= \mu_I\left(\limsup_{n\to\infty} B_n\right) =\mu_I\left(\bigcap _{n\geq 1}\bigcup_{k\geq n} B_n\right) \\
                                &=\lim_{n\to\infty} \mu_I \left(\bigcup_{k\geq n} B_n\right) \geq \lim_{n\to\infty} \mu_I(B_n)\gg 1.
\end{align*}
This completes the proof that
\[
\mu_I(W_2(\psi)) = \mu_I\left( \limsup_{n \to \infty} A_n\right) \gg 1,\]
which is \eqref{LocalLower}.

\section{Further discussion} \label{conditional results section}

\subsection{Digit changes to different bases}

The ``times two, times three'' phenomenon is, roughly speaking, the mantra that digit expansions to two multiplicatively independent bases cannot both be structured. One way to quantify this is to bound from below the maximum --- or equivalently the sum --- of $D_2(y)$ and $D_3(y)$, since $D_b(y)$ being small means that the base $b$ expansion of $y$ is structured.

\begin{qn}\label{RGSC} Is it true that for sufficiently large $y \in \N$, we have
	\[
	D_2(y) + D_3(y) \gg \log y?
	\]
\end{qn}

\noindent Note that for all $y \in \bN$ we have $D_2(y) + D_3(y) \le 10 (1 + \log y)$. This bound can be obtained using ``naive'' estimates: observing that $y$ has at most $1 + \frac{\log{y}}{\log{2}}$ digits in its base 2 expansion and at most $1 + \frac{\log{y}}{\log{3}}$ digits in base 3. Consequently, the number of digit changes in base 2 and base 3 are also bounded above by these values, respectively. Finally, by estimating $\log{2}$ and $\log{3}$ quite crudely, e.g. $\frac{1}{2} < \log{2},\log{3} < 2$ will do, we easily obtain the claimed upper bound. For Question~\ref{RGSC}, the extremal example that we have in mind is when $y$ is a large power of 2; if the ternary expansion were roughly uniformly random then we would have $D_2(y) + D_3(y) \approx \frac{2\log y}{3 \log 3}$. Reformulating, what Question \ref{RGSC} is asking is whether, for all sufficiently large $y$, we have $D_2(y)+D_3(y) \asymp \log{y}$. We have some empirical evidence in support of a positive answer; see the appendix.

\subsection{Conditional approximation results}
As we have seen, the estimate \eqref{binary ternary digit changes inequality} given in Lemma \ref{digit changes lemma} plays an important role in obtaining Theorems \ref{main convergence theorem} and \ref{main divergence theorem}. Thus, one possible means of improving those Diophantine approximation results would be to obtain a better lower bound for $D_2(y)+D_3(y).$ In this subsection, we discuss the extent to which such an improved bound would lead to better approximation results.

Suppose that one has the estimate
\[
D_2(y)+D_3(y)\geq h(y),
\]
for an increasing function $h:\mathbb{N}\to (0,\infty)$ which tends to infinity as $y\to\infty$. Then, by repeating the argument in the proof of Lemma \ref{FinalCount}, we find that the conclusions of that lemma are valid for $k\leq \varepsilon h(2^n)$, with a suitable constant $\varepsilon>0.$ Now we make the following replacement of the approximation function:
\[
\widetilde{\psi}: 2^n \mapsto \max\{\psi(2^n), 2^{-\varepsilon h(2^n)}   \}.
\]
Suppose that
\[
\sum_{n = 1}^\infty 2^{-\varepsilon h(2^n)}<\infty.
\]
Then $\sum_{n=1}^{\infty}\widetilde{\psi}(2^n)$ converges if and only if $\sum_{n=1}^{\infty}\psi(2^n)$ converges. 

For $n \in \bN$, let
\[\widetilde{A_n} := \bigcup_{a = 0}^{2^n} B\left(\frac{a}{2^n}, \frac{\widetilde{\psi}(2^n)}{2^n}\right).\]
By a similar argument as in the proof of Theorem \ref{main convergence theorem}, we see that 
\[
\mu(A_n)\leq \mu(\widetilde{A}_n)\ll \widetilde{\psi}(2^n).
\]
Thus, by applying the convergence Borel--Cantelli Lemma (Lemma \ref{first Borel-Cantelli}), we obtain the following result.

\begin{theorem} \label{conditional convergence theorem}
There exists an effectively computable universal constant $\eps > 0$ such that the following holds. If $D_2(y)+D_3(y)\geq h(y)$ for all $y\geq 1$, where $h:\mathbb{N}\to (0,\infty)$ is an increasing function, and
	\[
	\sum_{n = 1}^\infty  (\psi(2^n) + 2^{-\eps h(2^n)})<\infty,
	\]
then
	\[
	\mu(W_2(\psi))=0.
	\]
\end{theorem}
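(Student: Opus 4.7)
The plan is to adapt the proof of Theorem \ref{main convergence theorem} by replacing $\psi$ with a suitably inflated approximation function that absorbs the hypothesised digit-change bound. Concretely, I would set
\[
\widetilde{\psi}(2^n) = \max\{\psi(2^n),\, 2^{-\eps h(2^n)}\},
\]
where $\eps > 0$ will be the universal constant coming from a strengthened version of Lemma \ref{FinalCount} described below, and define $\widetilde{A}_n = \bigcup_{a=0}^{2^n} B(a/2^n,\widetilde{\psi}(2^n)/2^n)$. Since $\psi \le \widetilde{\psi}$ we have $A_n \subseteq \widetilde{A}_n$, and hence $W_2(\psi) \subseteq \limsup_n \widetilde{A}_n$. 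The summability hypothesis of the theorem yields $\sum_{n \ge 1} \widetilde{\psi}(2^n) \le \sum_{n \ge 1}(\psi(2^n) + 2^{-\eps h(2^n)}) < \infty$, so the task reduces to establishing $\mu(\widetilde{A}_n) \ll \widetilde{\psi}(2^n)$ and applying the convergence Borel--Cantelli Lemma (Lemma \ref{first Borel-Cantelli}).

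The first step is to revisit the proof of Lemma \ref{FinalCount}. The only input of Lemma \ref{digit changes lemma} appears in the derivation of \eqref{D3bound}, where the lower bound $D_3(y) \ge \tfrac{c \log \log y}{\log\log\log y} - D_2(y)$ was used. Substituting instead $D_3(y) \ge h(y) - D_2(y)$ and repeating the identical chain of manipulations produces
\[
D_3(2^n|m|; L, M) \ge h(2^n) - \frac{\log|m|}{\log 2} - \Delta_3(T) - L - O(1),
\]
using the monotonicity of $h$. Since $|m| \le 2^k T$ with $\log T = O(k)$ in the subsequent optimisation, the right-hand side is bounded below by $\tfrac{1}{2} h(2^n) - C k$ for an absolute constant $C$, provided $L + O(1) \le \tfrac{1}{4} h(2^n)$ (which holds once $n$ is large because $h(2^n) \to \infty$, a consequence of $\sum 2^{-\eps h(2^n)} < \infty$). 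Consequently Lemma \ref{FinalCount}(a) and (b) remain valid in the enlarged range $1 \le k \le \eps h(2^n)$ with an effectively computable universal constant $\eps > 0$ independent of $h$.

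With this strengthened lemma in hand, I would rerun the proof of Theorem \ref{main convergence theorem} verbatim with $\widetilde{\psi}$ in place of $\psi$, choosing
\[
k_n = \Bigl\lfloor -\log \widetilde{\psi}(2^n) / \log 2 \Bigr\rfloor + 1.
\]
By construction $\widetilde{\psi}(2^n) \ge 2^{-\eps h(2^n)}$, so $k_n \le \eps h(2^n) + O(1)$ and the admissibility condition in the strengthened Lemma \ref{FinalCount} is satisfied automatically. The chain of inequalities in Section \ref{CT} then delivers
\[
\mu(\widetilde{A}_n) \ll \bigl(2^{k_n} \widetilde{\psi}(2^n)\bigr)^{\gamma} \cdot 2^{-k_n} \asymp \widetilde{\psi}(2^n),
\]
since the choice of $k_n$ forces $2^{k_n} \widetilde{\psi}(2^n) \asymp 1$. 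Summing and invoking Borel--Cantelli yields $\mu(W_2(\psi)) \le \mu(\limsup_n \widetilde{A}_n) = 0$.

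The main obstacle is bookkeeping rather than any new idea: one must verify, while transcribing the Fourier argument of Lemma \ref{FinalCount}, that the replacement of $c \log \log y / \log \log \log y$ by $h(y)$ introduces no $h$-dependent constants. This amounts to checking that the cutoff parameter $T$, the Schwartz tail bound \eqref{RapidDecay}, and the final optimisation of $\eps$ depend only on absolute quantities and on the (fixed) bump function $\phi$, so that the resulting constant $\eps$ in the conclusion is genuinely universal.
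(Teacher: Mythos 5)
Your proposal follows the same route as the paper's proof: inflate $\psi$ to $\widetilde{\psi}(2^n)=\max\{\psi(2^n),2^{-\eps h(2^n)}\}$, observe that the digit-change input in the proof of Lemma \ref{FinalCount} occurs only in the derivation of \eqref{D3bound} so that the conclusions of that lemma extend to the range $k\le\eps h(2^n)$, rerun the argument of Section \ref{CT} with $\widetilde{\psi}$ in place of $\psi$ and the single choice $k_n\asymp-\log_2\widetilde{\psi}(2^n)$ to get $\mu(A_n)\le\mu(\widetilde{A}_n)\ll\widetilde{\psi}(2^n)$, and invoke Borel--Cantelli. This is exactly the paper's strategy, and your extra detail on how $h(2^n|m|)\ge h(2^n)$ replaces the Stewart--Bugeaud bound is correct (the slight overshoot $k_n\le\eps h(2^n)+1$ is harmless since it is absorbed by shrinking $\eps$).
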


\noindent
We thereby obtain the convergence part of Conjecture \ref{main conjecture} conditionally, for example if we can choose $h(y) = C \log\log y$ for some sufficiently large and effectively computable constant $C > 0$. Recall that we can choose $h(y)\gg \log\log y/\log\log\log y$ for large $y$ unconditionally, by inequality (\ref{binary ternary digit changes inequality}), so in some sense we are quite close to obtaining the convergence part of Conjecture \ref{main conjecture}.

Similarly, we can also improve Theorem \ref{main divergence theorem} if we have a stronger lower bound for $D_2(y) + D_3(y)$.

\vbox{
\begin{theorem} \label{conditional divergence theorem}
Suppose $D_2(y)+D_3(y)\geq h(y)$ for all $y\geq 1$, where $h:\mathbb{N}\to (0,\infty)$ is an increasing function. 
\begin{enumerate} [(a)] 
\item \label{a} If $h(y) \gg \log y$ then for $\psi(2^n)=\frac{1}{n}$ we have $\mu(W_2(\psi))=1$.
\item \label{b} If $h(y)\gg \log\log y$ then for $\psi(2^n)=\frac{1}{1+\log n}$ we have $\mu(W_2(\psi))=1$.
\end{enumerate}
\end{theorem}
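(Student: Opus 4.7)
The plan is to follow the proof of Theorem \ref{main divergence theorem} in Section \ref{divergence theory proof} essentially verbatim, but to take full advantage of the improved digit-change hypothesis. As noted in the discussion preceding Theorem \ref{conditional convergence theorem}, under the assumption $D_2(y)+D_3(y)\ge h(y)$ the conclusion of Lemma \ref{FinalCount} remains valid in the enlarged range
\[
1 \le k \le \varepsilon h(2^n),
\]
for some absolute constant $\varepsilon>0$. This is the only input needed; every other ingredient (Lemmas \ref{connection}, \ref{DroppingDown}, \ref{shifting parameters}, \ref{positive measure lemma}, and the Chung--Erd\H{o}s inequality) carries over unchanged. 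After fixing an arbitrary ball $I=B(z,r)\subset[0,1]$ with $z\in K$, I will mirror the two-step strategy of the Section \ref{divergence theory proof} proof: first establish $\mu_I(A_n)\gg\psi(2^n)$, then estimate $\mu_I(A_n\cap A_m)$ for $n\le m\le n^+$ analytically using Lemma \ref{FinalCount}(b), and finally feed both into Chung--Erd\H{o}s to obtain $\mu_I(W_2(\psi))\gg 1$. The doubling property of $\mu$ together with Lemma \ref{positive measure lemma} then promotes this to $\mu(W_2(\psi))=1$.

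For part \ref{a}, where $\psi(2^n)=1/n$ and $h(2^n)\gg n$, the parameter $k$ required to obtain $\mu_I(A_n)\gg 1/n$ via Lemma \ref{FinalCount}(a) satisfies $2^{-k}\asymp 1/n$, so $k\asymp\log n$, well within $\varepsilon h(2^n)$. For the intersection bound, the corresponding parameter is $k(m,n)\asymp m-n+\log m$, and the condition $k(m,n)\le \varepsilon h(2^n)$ is satisfied whenever $m\le(1+\eta)n$ for a sufficiently small absolute constant $\eta>0$. I therefore set
\[
n^+=\lfloor(1+\eta)n\rfloor.
\]
The intersection estimate \eqref{intersection estimate} then holds verbatim, and
\[
\sum_{t=n}^{n^+}\mu_I(A_t)\gg\sum_{t=n}^{n^+}\frac{1}{t}\asymp\log(1+\eta)\gg 1,
\]
which by Chung--Erd\H{o}s yields $\mu_I\bigl(\bigcup_{i=n}^{n^+}A_i\bigr)\gg 1$ uniformly in $n$.

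For part \ref{b}, where $\psi(2^n)=1/(1+\log n)$ and $h(2^n)\gg\log n$, the analogous computations give $k\asymp\log\log n$ for the pointwise lower bound and $k(m,n)\asymp m-n+\log\log m$ for the intersection, and the admissibility condition $k(m,n)\le\varepsilon h(2^n)$ is met provided $m\le n+c\log n$ for a small constant $c>0$. I therefore choose
\[
n^+=n+\lfloor c\log n\rfloor,
\]
after which
\[
\sum_{t=n}^{n^+}\mu_I(A_t)\gg\sum_{t=n}^{n^+}\frac{1}{1+\log t}\asymp\frac{c\log n}{\log n}\gg 1,
\]
so Chung--Erd\H{o}s again gives $\mu_I\bigl(\bigcup_{i=n}^{n^+}A_i\bigr)\gg 1$.

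In both cases, writing $B_n=\bigcup_{i=n}^{n^+}A_i$, one has $\limsup_n B_n=\limsup_n A_n$ and the monotone-continuity argument at the end of Section \ref{divergence theory proof} yields $\mu_I(W_2(\psi))\ge\lim_n\mu_I\bigl(\bigcup_{k\ge n}B_k\bigr)\ge\liminf_n\mu_I(B_n)\gg 1$. Since $I$ was arbitrary, Lemma \ref{positive measure lemma} finishes the proof. The only genuine obstacle -- and the place where the argument departs slightly from Section \ref{divergence theory proof} -- is the final step of Chung--Erd\H{o}s: in the original proof the partial sum $\sum_{t=n}^{n^+}\mu_I(A_t)$ was shown to tend to infinity, whereas here it only remains bounded below by an absolute constant. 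This is still enough, because the Chung--Erd\H{o}s quotient $(1+1/\sum\mu_I(A_t))^{-1}$ is bounded below whenever $\sum\mu_I(A_t)$ is, but one must verify carefully that the constants in the intersection bound \eqref{intersection estimate} are absolute (independent of $I$) so that the uniform lower bound survives.
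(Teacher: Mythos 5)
Your proposal is correct and follows exactly the paper's intended approach: the paper gives only a brief sketch (``repeat the arguments of Section \ref{divergence theory proof} with a modified choice of $n^+$''), and your choices $n^+=\lfloor(1+\eta)n\rfloor$ and $n^+=n+\lfloor c\log n\rfloor$ match the paper's $n+\lfloor\varepsilon n\rfloor$ and $n+\lfloor\varepsilon\log n\rfloor$ up to inessential rewrites. Your extra remark that the partial sum $\sum_{t=n}^{n^+}\mu_I(A_t)$ is merely bounded below by an absolute constant rather than diverging, and that this is still enough for the Chung--Erd\H{o}s quotient, is a genuine subtlety the paper leaves implicit, and you handle it correctly.
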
}

\noindent This can be proved by repeating the arguments in Section \ref{divergence theory proof}. The difference is the choice of $n^+$ in~\eqref{n+}. In Case (\ref{a}), we can choose $n^+=n+\lfloor\varepsilon n\rfloor$, where $\varepsilon>0$ is a constant which depends on the implicit constant in $h(y)\gg \log y.$ In Case (\ref{b}), we can choose $n^+=n+\lfloor\varepsilon \log n\rfloor$ for some $\varepsilon>0.$

\begin{remark} In Case \eqref{a}, which corresponds to a positive answer to Question \ref{RGSC}, the conclusion is almost the divergence part of Conjecture \ref{main conjecture}. Indeed, as $\sum_{n=1}^\infty \frac1{n(\log n)^2}$ converges, the function $\psi(2^n) = 1/n$ comes within a log-power factor of the conjectured truth (or doubly-logarithmic in the input). With the same method, one can show that the same conclusion holds for any approximation function $\psi$ satisfying
	\[
	\limsup_{k\to\infty} \sum_{n=2^k}^{2^{k+1}} \psi(2^n)>0.
	\]
The reason for introducing Case (\ref{b}) is that it appears to be closer to our reach; see Theorem \ref{LangConsequence}.
	\end{remark}

\subsection{Conditional estimates for $D_2(y)+D_3(y)$}
We believe that Question \ref{RGSC} should be difficult to answer. In this section, we illustrate how to sharpen inequality (\ref{binary ternary digit changes inequality}) by assuming the Lang--Waldschmidt Conjecture on Baker's logarithmic sum estimates \cite[Conjecture 1 (page 212)]{Lan1978}.

\begin{conjecture} [Lang--Waldschmidt Conjecture] \label{LangConjecture} Let $a_1, \ldots, a_n, b_1, \ldots, b_n$ be non-zero integers with
	\[
	\Lam := \sum_i b_i \log a_i \ne 0.
	\]
	Then
	\[
	\log |\Lam| \ge - C_n (\log A + \log B),
	\]
	where 
	\[
	A = \max_i |a_i|, \qquad B = \max_i |b_i|,
	\]
	and $C_n>0$ is an effectively computable constant which depends only on $n.$
\end{conjecture}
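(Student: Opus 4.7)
The final statement is the Lang--Waldschmidt conjecture, a long-standing open problem on linear forms in logarithms, so honest intellectual honesty requires me to outline an \emph{attempted} strategy rather than to pretend at a proof. The natural framework is Baker's method: one constructs an auxiliary entire function, shows it has many zeros, then uses the hypothesis that $|\Lam|$ is small to extrapolate still more zeros until a contradiction with Siegel's lemma is reached. What is conjectural is not the framework but the quantitative efficiency of its execution.

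First I would perform the usual normalisation: relabel so that $B=|b_n|$, divide $\Lam$ by $b_n$, and rewrite the hypothesis as the assertion that
\[
\Bigl| a_n - \prod_{i<n} a_i^{-b_i/b_n} \Bigr|
\]
is exponentially small in $|\Lam|$. Next, choose integer parameters $S,T,L_0,\ldots,L_n$ to be optimised later and, via Siegel's lemma, produce integers $p(\l_0,\l_1,\ldots,\l_n)$ not all zero, of bounded height, such that the auxiliary function
\[
F(z) \;=\; \sum_{\l_0=0}^{L_0}\cdots\sum_{\l_n=0}^{L_n} p(\l_0,\ldots,\l_n)\, z^{\l_0}\, a_1^{\l_1 z}\cdots a_n^{\l_n z}
\]
vanishes to order at least $T$ at each of $z=0,1,\ldots,S-1$. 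The arithmetic balance demands roughly $L_0 L_1\cdots L_n \gg ST \log A$. To achieve the sharp $\log A+\log B$ strength, one must take the $L_i$ close to their theoretical minimum.

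The extrapolation is the crucial step. Substituting $a_n^{\l_n z} = \bigl(\prod_{i<n}a_i^{-b_i/b_n}\bigr)^{\l_n z} e^{O(z\Lam)}$ inside $F$, one applies a Schwarz lemma on a disk of growing radius to bound $|F|$ and its low derivatives from above, while an arithmetic bound gives a lower bound on any nonzero value. Combining these forces $F$ to vanish on a strictly larger set than was built in, and iterating the process yields more zeros than the degree bounds permit, forcing $p\equiv 0$, contradiction. The conclusion then reads off as $\log|\Lam|\geq -C_n(\log A+\log B)$ from the optimal parameter choices.

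The hard part, and the reason this is a conjecture, is precisely the efficiency of the extrapolation. In the best unconditional work (Baker, W\"ustholz, Matveev), each extrapolation round loses a factor of $\log A$, producing bounds of the shape $-C(n)(\log B)(\log A)^{n}(\log\log A)^{O(1)}$ rather than the conjectured $-C_n(\log A+\log B)$. Closing this gap would appear to require either a zero-estimate on $\bG_m^n$ substantially stronger than Philippon--Waldschmidt, or an interpolation device outside the classical Hermite determinant framework (perhaps via a Vojta-style higher-dimensional setup). I do not see how to accomplish either with current technology, and accordingly I would not expect to prove Conjecture~\ref{LangConjecture} in full; what I \emph{would} attempt is to recover the sharp shape under GRH-type hypotheses on heights of algebraic numbers, and more pertinently to the present paper, to extract from the unconditional Baker--W\"ustholz bound any quantitative improvement over \eqref{binary ternary digit changes inequality} that already suffices to feed into Theorem~\ref{conditional convergence theorem} or Theorem~\ref{conditional divergence theorem}(\ref{b}).
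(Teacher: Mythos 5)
You have correctly identified that this statement is not proved in the paper and cannot be: it is the Lang--Waldschmidt Conjecture, stated as Conjecture~\ref{LangConjecture} and cited from Lang's \emph{Elliptic Curves: Diophantine Analysis} \cite{Lan1978}, and it remains a major open problem in transcendental number theory. The paper makes no attempt to prove it; it is invoked only as a hypothesis for the conditional results Theorem~\ref{LangConsequence} and Corollaries~\ref{Conditional Convergence} and~\ref{exponent}. Your write-up is therefore the right response: declining to fabricate a proof, correctly situating the statement within Baker's method, and accurately identifying where the unconditional bounds (Baker, W\"ustholz, Matveev) lose the factors of $\log A$ per extrapolation round that separate them from the conjectured $-C_n(\log A + \log B)$ shape. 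Your closing remark --- that the pragmatically useful thing to attempt is to squeeze whatever improvement one can from the unconditional theory and feed it into Theorems~\ref{conditional convergence theorem} and~\ref{conditional divergence theorem}(\ref{b}) --- is exactly the spirit in which the paper uses this conjecture, and matches how the authors derive their unconditional Lemma~\ref{digit changes lemma} from the Stewart and Bugeaud--Cipu--Mignotte bounds, which are in turn consequences of Baker's theory.
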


\begin{theorem} \label{LangConsequence}
	Assuming the Lang--Waldschmidt Conjecture, for sufficiently large $y$, we have
	\[
	D_2(y) + D_3(y) \gg \log \log y.
	\]
\end{theorem}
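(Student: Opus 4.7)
The plan is to transplant Stewart's effective proof of $D_2(y) + D_3(y) \gg \log\log y/\log\log\log y$, substituting the Lang--Waldschmidt estimate for Baker's theorem at the step where a linear form in logarithms is estimated. The saving of the $\log\log\log y$ factor is exactly the penalty paid by Baker's bound of shape $|\Lambda| \geq B^{-C\log A}$; under Lang--Waldschmidt one has $|\Lambda| \gg (\log y)^{-C}$ instead, and this is directly responsible for the improvement.

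First I would use the block decompositions in the two bases to write
\[
y = \sum_{i=1}^{K} \sigma_i 2^{n_i}, \qquad 2y = \sum_{j=1}^{L} \tau_j 3^{m_j},
\]
with $\sigma_i \in \{\pm 1\}$, $\tau_j \in \{\pm 1, \pm 2\}$, $n_1 > n_2 > \cdots$, $m_1 > m_2 > \cdots$, and $K \ll D_2(y)+1$, $L \ll D_3(y)+1$. Since $y > 0$, the dominant terms of each sum are positive, so $\sigma_1 = 1$ and $\tau_1 \in \{1,2\}$. Subtracting the leading term from each representation and applying the triangle inequality gives
\[
|2^{n_1+1} - \tau_1 3^{m_1}| \ll (D_2(y) + D_3(y)) \max(2^{n_2}, 3^{m_2}).
\]
Dividing by the common magnitude $\asymp y$ and passing to logarithms yields
\[
|\Lambda| \ll (D_2(y) + D_3(y)) \max(2^{-\ell_2}, 3^{-\ell_3}),
\]
where $\Lambda := (n_1+1)\log 2 - m_1 \log 3 - \log \tau_1$ and $\ell_2 := n_1 - n_2$, $\ell_3 := m_1 - m_2$ are---up to additive constants---the lengths of the top blocks of $y$ in the two expansions. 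Irrationality of $\log 2/\log 3$ (combined with $\tau_1 \in \{1,2\}$) ensures $\Lambda \neq 0$. Applying Conjecture \ref{LangConjecture} with $n = 3$, $A = O(1)$, and $B = O(\log y)$ gives $|\Lambda| \gg (\log y)^{-C}$. Matching the two bounds on $|\Lambda|$ forces
\[
\min(\ell_2, \ell_3) \ll \log\log y,
\]
since $D_2(y) + D_3(y) \leq 2\log y$ trivially.

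The remaining step---and the main obstacle---is to convert this bound on a single top-block length into a lower bound on the total number of digit changes. The natural approach is to iterate: assume, without loss of generality, that $\ell_2 \ll \log\log y$, replace $y$ by $y' = y - (2^{n_1} - 2^{n_2})$, and restart. The difficulty is that this operation preserves neither the sparse ternary structure nor the bound on $D_3$; a priori $D_3(y')$ may be much larger than $D_3(y)$. I would handle this by tracking, at the $k$-th stage, an $S$-integer $y_k$ (with $S = \{2,3\}$) possessing $O(k)$ sparse terms in each base, and applying Lang--Waldschmidt at each stage to a linear form built from the two leading sparse terms. Because the Lang--Waldschmidt constant $C_n$ depends only on the number of terms, and the sparse-term count grows only linearly with the iteration count, the bound $(\log y)^{-C}$ for $|\Lambda|$ survives each iteration with the constant $C$ inflated by a bounded amount. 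Each iteration removes a block of length $O(\log\log y)$ from one of the expansions, and the total length $\log_2 y + \log_3 y$ must eventually be exhausted; this yields $D_2(y) + D_3(y) \gg \log y/\log\log y$, from which the stated bound follows a fortiori. Verifying in detail that the sparse-term bookkeeping remains controlled throughout the iteration is the technical heart of the argument and mirrors the corresponding step in Stewart's original proof.
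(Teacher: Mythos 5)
Your opening step is correct and actually sharper than the corresponding step in the paper's argument: for the \emph{top} block, the linear form $\Lambda = (n_1+1)\log 2 - m_1\log 3 - \log\tau_1$ involves only $\log 2$ and $\log 3$ with integer coefficients $O(\log y)$, so Lang--Waldschmidt gives $|\Lambda| \gg (\log y)^{-C}$ with no additional penalty, and $\min(\ell_2,\ell_3) \ll \log\log y$ follows. But the iteration you then propose does not go through, and the gap is precisely where you flag it. Once you set $y' = y - (2^{n_1}-2^{n_2})$, the ternary structure of $y'$ has no controlled relationship with that of $y$: a power of $2$ is essentially never ternary-sparse, so $D_3(y')$ can be arbitrary. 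Your proposed fix --- maintaining $y_k$ as an $S$-integer with $O(k)$ sparse terms in each base --- does not resolve this, because after one step the ``ternary representation'' of $y_1$ is a sum of ternary terms \emph{plus} a stray binary term $-\sigma_1 2^{n_1}$, and the leading ``ternary'' term of $y_1$ in that bookkeeping is the quantity $\tau_1 3^{m_1}-\sigma_1 2^{n_1}$, which is not a power of $3$. The linear form you would need to estimate at the next stage therefore has an extra logarithm $\log(\tau_1 3^{m_1}-\sigma_1 2^{n_1})$ whose algebraic height is $\asymp \log y$ (not $O(1)$), and Lang--Waldschmidt then costs you a factor of $(\log y)^{-C}$ per iteration rather than once. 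This is exactly the obstruction that forces the geometric, rather than arithmetic, recursion, and it is why the claimed output $D_2(y)+D_3(y) \gg \log y/\log\log y$ --- strictly stronger than the theorem --- should have been a warning sign.

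The paper's proof (following Stewart) avoids iteration entirely. One fixes the integer $y$, marks the digit-change positions in both bases measured from the top, and introduces a strictly increasing sequence of depths $\theta_1 < \cdots < \theta_F$. If some window $[\theta_j,\theta_{j+1})$ contains no digit-change position, one truncates $2y$ at depth $\theta_j$ in both bases, obtaining $2y = A_1 2^{r-\theta_j}+A_2 = B_1 3^{t-\theta_j}+B_2$ with $A_1 \asymp 2^{\theta_j}$, $B_1 \asymp 3^{\theta_j}$, and error terms $A_2, B_2$ that are exponentially small in $\theta_{j+1}$. The ratio $R = A_1 2^{r-\theta_j}/(B_1 3^{t-\theta_j})$ satisfies $|\log R| \ll 2^{-\theta_{j+1}}$, while Lang--Waldschmidt applied to the four-logarithm form $\log R$ gives $\log|\log R| \geq -C_4(\log\max\{A_1,B_1\} + \log(r-\theta_j)) \gg -(\theta_j + \log\log y)$. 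Matching the two forces $\theta_{j+1} \ll \theta_j + \log\log y$; this is the step your approach avoids for the top block (where $A_1,B_1$ are $O(1)$) but cannot avoid deeper in. With the multiplicative recursion $\theta_{j+1} \leq 21 C_4 \theta_j$ (for $\theta_j \gtrsim \log\log y$), one fits $F \gg \log\log y$ windows into $(2,t)$, and since every window must contain a digit change, $D_2(y)+D_3(y) \geq F \gg \log\log y$. So the paper's method directly counts digit changes across all depths at once, rather than peeling them off one at a time; the geometric growth of $\theta_j$ is the honest cost of the coefficients $A_1, B_1$ being exponentially large in the depth, and that is the barrier your iteration would also face once you pushed past the first step.
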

\begin{remark}
	This asserts that, for some constant $c>0$, we have $D_2(y)+D_3(y)\geq c\log\log y$ for all large enough $y$. This constant $c$ is effectively computable from Conjecture \ref{LangConjecture}. Recalling Theorem \ref{conditional convergence theorem} and the subsequent discussion regarding the constant $C$, we see that if $c\geq C$ then the convergence part of Conjecture \ref{main conjecture} would follow.
\end{remark}
\begin{proof}
We follow Stewart's approach in \cite{Stewart ref}. We may assume that $y\geq 16>e^e$ so that $
\log\log y>1.$ Let
\[
y = \sum_{i =0}^r a_i 2^i = \sum_{i = 0}^t b_i 3^i
\]
be the binary and ternary expansions of $y$, and note that $a_r, b_t \ge 1$. Let $m_1 < m_2 < \cdots < m_k$ mark the binary digit changes of $y$, and let $n_1 < n_2 < \cdots < n_s$ mark the ternary digit changes, i.e. for $j = 1, \dots, k$ we have $a_{m_j} \neq a_{m_j+1}$ and for $\ell = 1, \dots, s$ we have $b_{n_{\ell}} \neq b_{n_{\ell}+1}$ (and there are no other digit changes). Let
\[
2 < \tet_1 < \cdots < \tet_F < t,
\]
with these parameters to be decided later. Observe that if each $[\tet_i, \tet_{i+1})$ contains an element of 
\[
\{ r - m_1, \ldots, r - m_k, t - n_1, \ldots, t - n_s \}
\]
then $k + s \ge F$.

Otherwise, suppose $[\tet_j, \tet_{j+1})$ fails to intersect this set; we will choose the $\tet_j$ in such a way as to contradict this premise. It follows from this assumption that for some $\alpha \in \{0,1\}$ we have
\begin{align*}
y &= \sum_{r - i \notin [\tet_j, \tet_{j+1})} a_i 2^i + \sum_{r - i \in [\tet_j, \tet_{j+1})} \alp 2^i \\
&= \alp (2^{r+1}-1) + \sum_{0 \le r - i < \tet_j} (a_i - \alp) 2^i + \sum_{\tet_{j+1} \leq r-i \leq r} (a_i - \alp) 2^i \\
&= 2^{r- \tet_j} \Bigl( \alp 2^{\tet_j+1} + \sum_{u=1}^{\tet_j} (a_{r-\theta_j+u} - \alp)2^u \Bigr) - \alp +  \sum_{i = 0}^{r- \tet_{j+1}} (a_i - \alp) 2^i,
\end{align*}
and similarly in ternary, for some $\beta \in \{0,1,2\}$,
\[y=3^{t-\theta_j}\left(\frac{\beta}{2} \cdot 3^{\theta_j+1} + \sum_{u=1}^{\theta_j}{(b_{t-\theta_j+u}-\beta)3^u}\right) - \frac{\beta}{2} +\sum_{i=0}^{t-\theta_{j+1}}(b_i-\beta){3^i}.\]
Hence
\begin{align} \label{2y}
2y = A_1 2^{r - \tet_j} + A_2 = B_1 3^{t - \tet_j} + B_2,
\end{align}
where $A_1, A_2, B_1, B_2$ are integers satisfying 
\begin{enumerate}[(i)]
\item{$\displaystyle{2 \cdot 2^{\tet_j} \le A_1 \le 4 \cdot 2^{\tet_j}}$,} 
\item{$\displaystyle{|A_2| < 4 \cdot 2^{r - \tet_{j+1}}}$,}			
\item{$\displaystyle{3^{\tet_j} \le B_1 \le 6 \cdot 3^{\tet_j}}$}, and			
\item{$\displaystyle{|B_2| < 4 \cdot 3^{t - \tet_{j+1}}}$.}
\end{enumerate}

Next, observe that by \eqref{2y} we have
\[
1 = \frac{A_1 2^{r - \tet_j} + A_2}{B_1 3^{t - \tet_j} + B_2} =  R \frac{1+X}{1+Y},
\]
where
\[
R = \frac{A_1 2^{r - \tet_j} }{B_1 3^{t - \tet_j}}, \qquad X = \frac{A_2}{A_1 2^{r - \tet_j}}, \text{ and} \qquad  Y = \frac{B_2}{B_1 3^{t - \tet_j}}.
\]
Using (i)--(iv) from above, and since necessarily we have $\theta_{j+1} > 2$, we compute that
\[
|X| \le \frac{ 4 \cdot 2^{r - \tet_{j+1}} } {2 \cdot 2^r} = 2 \cdot 2^{- \tet_{j+1}} < \frac12
\]
and
\[
|Y| \le
\frac{ 4 \cdot 3^{t - \tet_{j+1}} } {3^t} = 4 \cdot 3^{- \tet_{j+1}} < \frac12.
\]
In particular, we have $R > 0$ and
\begin{align*}
\max \{ R, R^{-1} \} &= \max \Bigl \{ \frac{1+X}{1+Y}, \frac{1+Y}{1+X} \Bigr \} 
                     \le 1 + 4 \max \{ |X|, |Y| \} 
                     < 1 + 16 \cdot 2^{-\tet_{j+1}}.
\end{align*}
We now apply the standard inequality
\[
\log (1+ x) \le x \qquad (x > -1)
\]
with $x = R - 1$ and $R^{-1} - 1$, to obtain 
\begin{equation} \label{logR}
| \log R | = \max \{ \log R, \log (R^{-1}) \} \le 2^{5-\tet_{j+1}}.
\end{equation}

On the other hand, 
\[\log{R} = \log{A_1}+(r-\theta_j)\log{2} - \log{B_1} - (t-\theta_j)\log{3}.\] 
So, assuming Conjecture \ref{LangConjecture} we have
\begin{align} \label{Apply Lang}
\log | \log R | \ge -C_4 (\log \max \{ |A_1|, |B_1| \} + \log (r - \tet_j))
\end{align}
unless $\log R=0.$ We may take $C_4\geq 1.$ The latter, $\log{R} = 0$, would imply that
\[
A_1 2^{r-\theta_j}=B_1 3^{t-\theta_j}.
\]
Hence $3^{t-\theta_j}| A_1$, so $3^{t-\theta_j} \le A_1 \le 2^{\tet_j+2}$, and therefore
\[
2^{r+2} \geq 3^{t-\theta_j}2^{r-\theta_j}.
\]
Observe that $3^{t+1}\geq y\geq 2^r,$ and hence $(t+1)\log{3} \geq r\log{2}$. Thus,
\[
(r+2)\log 2\geq (t-\theta_j)\log 3+(r-\theta_j)\log 2\geq (r\log 2-\log 3)+r\log 2-\theta_j \log 6.
\]
This is only possible if
\[
\theta_j\geq \frac{r\log 2-\log 12}{\log 6}.
\]

We next observe that
\[
\tet_{j+1} - 5 \le - \: \frac{\log | \log R|}{\log 2} \leq 3C_4(\tet_j + \log \log y + 1).
\]
When $\theta_j< \frac{r\log 2-\log 12}{\log 6}$, the first inequality above follows from \eqref{logR}. For the second inequality we apply \eqref{Apply Lang} and make use of the upper bounds given in (i) and (iii) for $A_1$ and $B_1$ respectively, as well as noting that $2^r \leq y$. 

Choosing $\tet_1, \ldots, \tet_F$ so that
\[
\tet_{j+1} - 5 > 3C_4(\tet_j + \log \log y+1), \qquad \theta_j< \frac{r\log 2-\log 12}{\log 6} \qquad (1 \le j \le F)
\]
contradicts our premise that $[\tet_j,\theta_{j+1})$ fails to intersect $\{ r - m_1, \ldots, r - m_k, t - n_1, \ldots, t - n_s \}$.  For example, we can choose $\theta_1 = 3$ and $\theta_{j+1} = 6 + \lfloor 3 C_4 (\theta_j + \log{\log{y}} + 1) \rfloor$ for $j \geq 1$. 
Let $F \in \N$ be the largest integer such that $\theta_F < \frac{r \log{2} - \log{12}}{\log{6}}$. Recalling that $\log\log y>1,$ we see that 
\[
\log\log y \leq \theta_2\leq 6+3C_4(4+\log\log y)\leq  21C_4 \log \log y.
\]
Subsequently, we have
\[
\theta_{j+1}\leq 21C_4 \theta_{j}
\]
for $j\geq 2.$ Thus we see that for $j\geq 2,$
\[
\theta_{j}\leq (21C_4)^{j-1} \log\log y.
\]
In conclusion, we can find
\[
F\geq \frac{\log \left(\frac{r\log 2-\log 12}{\log\log y\log 6} \right)}{\log (21C_4)}\gg \log \log y
\]
many points $\theta_1,\dots, \theta_F \in (2,t) $ such that each $[\theta_j,\theta_{j+1})$ contains an element of 
\[
\{ r - m_1, \ldots, r - m_k, t - n_1, \ldots, t - n_s \}.
\]
This completes the proof.
\end{proof}

Inserting this into Theorem \ref{conditional convergence theorem}, we obtain the following further refinement of the benchmark Proposition \ref{basic proposition}. 

\begin{corollary}\label{Conditional Convergence} Assume the Lang--Waldschmidt Conjecture \ref{LangConjecture}. For some effectively computable constant $\varepsilon > 0$, if
\[
\sum_{n=1}^\infty (n^{-\eps} \psi(2^n)^\gam + \psi(2^n)) < \infty,
\]
then $\mu(W_2(\psi)) = 0$.
\end{corollary}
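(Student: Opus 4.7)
The corollary is not obtained by a black-box application of Theorem \ref{conditional convergence theorem}, which only gives the weaker bound $\mu(A_n) \ll \widetilde\psi(2^n)$. Instead, I would rerun the proof of Theorem \ref{main convergence theorem} in Section \ref{CT}, but with Lemma \ref{digit changes lemma} replaced by Theorem \ref{LangConsequence}. Under the Lang--Waldschmidt Conjecture, the latter gives $D_2(y)+D_3(y) \gg \log\log y$ for large $y$, which is stronger than the unconditional bound $D_2(y)+D_3(y) \gg \log\log y / \log\log\log y$ by a factor of $\log\log\log y$. This improvement propagates into every bound that was derived from Lemma \ref{digit changes lemma}.

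First I would revisit Lemma \ref{FinalCount}. The only point at which the unconditional digit-change rate entered the argument was in the chain of inequalities \eqref{D3bound}, where the leading term became $\tfrac{c\log n}{2\log\log n}$. Under Lang--Waldschmidt the corresponding leading term is instead of order $\log n$, so the same bookkeeping (choosing $T$, absorbing $L+6$ into the main term, balancing against $\kappa'_{n,N,k}$) shows that the conclusions of Lemma \ref{FinalCount} remain valid in the enlarged range
\[
1 \le k \le \eps_1 \log n
\]
for some effectively computable $\eps_1 > 0$, in place of the unconditional range $k \le \eps \log n/\log\log n$. Next I would rerun Section \ref{CT} verbatim, with the sharper choice
\[
k_n := \min\Bigl( \lfloor \eps_1 \log n \rfloor, \, \lfloor -\log \psi(2^n)/\log 2 \rfloor + 1 \Bigr).
\]
The cascade of applications of Lemmas \ref{connection}, \ref{DroppingDown}, \ref{FinalCount}(b) and \ref{shifting parameters} is unchanged and again delivers
\[
\mu(A_n) \ll (2^{k_n}\psi(2^n))^\gam \cdot 2^{-k_n}.
\]
When the second entry is active this is $O(\psi(2^n))$, and when the first entry is active it is at most
\[
2^{-k_n(1-\gam)}\psi(2^n)^\gam \ll n^{-\eps}\psi(2^n)^\gam,
\]
with $\eps := \eps_1(1-\gam)\log 2 > 0$ effectively computable. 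Combining the two cases gives $\mu(A_n) \ll n^{-\eps}\psi(2^n)^\gam + \psi(2^n)$, and the convergence Borel--Cantelli Lemma (Lemma \ref{first Borel-Cantelli}) together with the hypothesis $\sum_{n=1}^\infty (n^{-\eps}\psi(2^n)^\gam + \psi(2^n)) < \infty$ concludes that $\mu(W_2(\psi)) = 0$.

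The main obstacle is not conceptual but notational: one must track the effective constants through Theorem \ref{LangConsequence}, the revised Lemma \ref{FinalCount}, and the final definition of $\eps$, and verify that the ``for sufficiently large $y$'' proviso in Theorem \ref{LangConsequence} causes no trouble (it affects only finitely many values of $n$, which contribute a finite amount to the Borel--Cantelli sum). Beyond this, no genuinely new ideas are needed; the entire machinery assembled in Sections \ref{FourierAnalysis} and \ref{CT} already yields $\mu(A_n) \ll 2^{-k_n(1-\gam)}\psi(2^n)^\gam + \psi(2^n)$ whenever one is permitted to take $k_n$ essentially as large as the quantitative digit-change bound allows.
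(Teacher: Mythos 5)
Your proposal is correct, and it is in fact more careful than the paper's own treatment, which dispatches the corollary in a single sentence (``Inserting this into Theorem \ref{conditional convergence theorem}''). You are right that a black-box application of the \emph{statement} of Theorem \ref{conditional convergence theorem} does not yield the corollary: with $h(y) = c\log\log y$ from Theorem \ref{LangConsequence}, the hypothesis $\sum_n 2^{-\eps h(2^n)} < \infty$ amounts to $\sum_n n^{-\eps c\log 2} < \infty$, which fails unless the Lang--Waldschmidt constant $c$ happens to satisfy $\eps c\log 2 > 1$, and the bound $\mu(A_n) \ll \widetilde\psi(2^n) = \max\{\psi(2^n), 2^{-\eps h(2^n)}\}$ obtained in that proof is genuinely weaker than the bound $\mu(A_n) \ll 2^{-k_n(1-\gamma)}\psi(2^n)^\gamma$ produced by the Section \ref{CT} argument. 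Your route --- repeating Section \ref{CT} verbatim with the conditional digit-change bound feeding into Lemma \ref{FinalCount} to enlarge the admissible range to $k \le \eps_1 \log n$, and then taking $k_n = \min(\lfloor\eps_1\log n\rfloor,\, \lfloor -\log\psi(2^n)/\log 2\rfloor + 1)$ --- is precisely what is needed to obtain $\mu(A_n) \ll n^{-\eps}\psi(2^n)^\gamma + \psi(2^n)$ with $\eps = \eps_1(1-\gamma)\log 2$, matching the corollary's hypothesis exactly and for any positive value of the Lang--Waldschmidt constant. This is surely what the authors intended by ``inserting'', since the same re-running of the Section \ref{CT} machinery is already implicit in their proof of Theorem \ref{conditional convergence theorem}; your write-up makes that step explicit and correct. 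Your handling of the ``sufficiently large $y$'' proviso (noting that the argument of $D_3$ in Lemma \ref{FinalCount} is always at least $2^n$, so the proviso only discards finitely many $n$) is also right.
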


\begin{remark} 
Corollary \ref{Conditional Convergence} applies in particular to the special case $\varphi_a: 2^n \mapsto n^{-a}$ discussed in the introduction, for some $a < \gam^{-1}$.
\end{remark}

\begin{corollary} \label{exponent} Assume the Lang--Waldschmidt Conjecture, and let $\eps$ be a small positive constant. Then for $\mu$-almost every $\alp$, the inequality
\begin{equation*} 
\| 2^n \alp \| < n^{\eps -\log 3 / \log 2}
\end{equation*}
has at most finitely many solutions $n \in \bN$. 
\end{corollary}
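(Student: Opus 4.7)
The plan is to deduce Corollary \ref{exponent} directly from Corollary \ref{Conditional Convergence} by making the natural choice of approximation function. Let $\eps_0 > 0$ denote the effectively computable constant appearing in the statement of Corollary \ref{Conditional Convergence}, and suppose that $\eps > 0$ is sufficiently small (specifically, $\eps < \min\{\eps_0/\gam, 1/\gam - 1\}$). Define
\[
\psi(2^n) = n^{\eps - \log 3/\log 2} = n^{\eps - 1/\gam},
\]
and extend $\psi$ to $\N$ in an arbitrary way (e.g. setting $\psi$ to be zero on non-powers of~$2$, since only values on $\cA(2)$ matter for $W_2(\psi)$).

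I would first verify that the hypothesis of Corollary \ref{Conditional Convergence} holds. Since $\eps < 1/\gam - 1$, the series
\[
\sum_{n=1}^\infty \psi(2^n) = \sum_{n=1}^\infty n^{\eps - 1/\gam}
\]
converges. Moreover,
\[
\sum_{n=1}^\infty n^{-\eps_0} \psi(2^n)^\gam = \sum_{n=1}^\infty n^{\gam\eps - 1 - \eps_0},
\]
which also converges since $\gam\eps < \eps_0$, so that $\gam\eps - 1 - \eps_0 < -1$. Therefore, under the Lang--Waldschmidt Conjecture, Corollary \ref{Conditional Convergence} yields $\mu(W_2(\psi)) = 0$.

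To conclude, I would unpack the definition of $W_2(\psi)$. For $q = 2^n$, the inequality $|x - p/2^n| < \psi(2^n)/2^n$ is equivalent to $\|2^n x\| < \psi(2^n) = n^{\eps - \log 3/\log 2}$. Hence the set of $\alp \in [0,1]$ for which the inequality
\[
\|2^n \alp\| < n^{\eps - \log 3/\log 2}
\]
has infinitely many solutions $n \in \N$ is precisely $W_2(\psi)$, and thus has $\mu$-measure zero. This gives the claim for $\mu$-almost every $\alp$, completing the proof. The argument is entirely a translation exercise once Corollary \ref{Conditional Convergence} is available, so there is no real obstacle here; the substantive work has already been carried out in establishing Theorem \ref{LangConsequence} and Corollary \ref{Conditional Convergence}.
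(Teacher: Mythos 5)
Your proof is correct and follows essentially the same route the paper intends: the remark immediately preceding Corollary \ref{exponent} observes that Corollary \ref{Conditional Convergence} applies to $\varphi_a:2^n\mapsto n^{-a}$ for $a<\gam^{-1}$, and the corollary is precisely the case $a=\gam^{-1}-\eps$, which is exactly the substitution you make, with the two convergence checks and the translation back to $\|2^n\alp\|<\psi(2^n)$ carried out correctly.
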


\subsection*{Acknowledgements.} We are indebted to Sanju Velani for suggesting this problem and for helpful discussions. We also thank Yann Bugeaud for a fruitful discussion. DA and SC are also grateful to the University of Cambridge for their hospitality during a productive visit in November 2019, and to P\'{e}ter Varj\'{u} for generously funding this visit.

\Addresses

\newpage
\section*{Appendix}

Here we present empirical data suggesting that Question \ref{RGSC} has a positive answer.
\begin{center} 
	\begin{figure}[H]
		\includegraphics[width=0.65\linewidth]{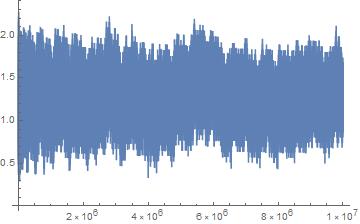}
		\caption{$(D_2(y) + D_3(y))/ \log y$ against $y$}
		\label{data1}
	\end{figure}
\end{center}

This was produced using the software \emph{Mathematica} \cite{Mathematica}, with the following code:

\begin{verbatim}
M = 10^7;
X_; c_; d_; X = ConstantArray[0, M]; 
For[n = 2, n < M + 2, n++,
c = 0; d = 0;
v = IntegerDigits[n, 2]; 
L = Length[v];
For[i = 1, i < L, i++,
If[Part[v, i] != Part[v, i + 1], c++, null]];
v = IntegerDigits[n, 3];
L = Length[v];
For[i = 1, i < L, i++, 
If[Part[v, i] != Part[v, i + 1], d++, null]];
Part[X, n - 1] = (c + d)/Log[n]; 
]
ListLinePlot[X]
\end{verbatim}

We obtain more data by only considering powers of $2$.
\begin{center}
	\begin{figure}[H]
		\includegraphics[width=0.65\linewidth]{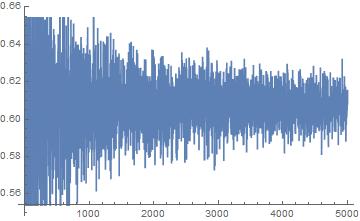}
		\caption{$D_3(2^y)/(y \log 2)$ against $y$}
		\label{data2}
	\end{figure} 
\end{center}

\end{document}